\documentclass[12pt]{amsart}

\usepackage{amsmath,amsthm,amssymb,mathrsfs,amsfonts,verbatim,color,leftidx}
\usepackage{enumerate}
\usepackage{mathabx}
\usepackage{etoolbox} 
\usepackage{bbm}
\usepackage[all,tips]{xy}
\usepackage{graphicx,ifpdf}
\ifpdf
   \DeclareGraphicsRule{*}{mps}{*}{}
\fi
\usepackage{geometry}
\geometry{a4paper,scale=0.7}
\geometry{left=2.5cm,right=2.5cm,top=2.5cm,bottom=2.5cm}

\newtheorem{thm}{Theorem}[section]
\newtheorem{lem}[thm]{Lemma}
\newtheorem{cor}[thm]{Corollary}

\theoremstyle{definition}
\newtheorem{de}[thm]{Definition}
\newtheorem{ex}[thm]{Example}

\theoremstyle{remark}
\newtheorem{rem}[thm]{Remark}

\numberwithin{equation}{section}


\makeatletter
\newcommand{\rmnum}[1]{\romannumeral #1}
\newcommand{\Rmnum}[1]{\expandafter\@slowromancap\romannumeral #1@}
\makeatother

\newcommand{\br}{\mathbf{r}}

\newcommand{\bt}{\mathbf{t}}

\newcommand{\R}{\mathbb{R}}
\newcommand{\N}{\mathbb{N}}
\newcommand{\SL}{\operatorname{SL}}
\newcommand{\GL}{\operatorname{GL}}
\newcommand{\Z}{\mathbb{Z}}

\newcommand{\til}{\widetilde}

\newcommand{\dd}{\; \mathrm{d}}
\newcommand{\df}{{\, \stackrel{\mathrm{def}}{=}\, }}

\newcommand{\diag}{\mathrm{diag}}

\newcommand{\Ad}{\operatorname{Ad}}

\newcommand{\lip}{\operatorname{Lip}}
\newcommand{\supp}{\mathrm{supp\,}}



\linespread{1.2} 

\begin{document}

\title{Central Limit theorem and cohomological equation on homogeneous spaces}


\author{Ronggang Shi}
\address{Shanghai Center for Mathematical Sciences, Jiangwan Campus, Fudan University, No.2005 Songhu Road, Shanghai, 200433, China}
 \email{ronggang@fudan.edu.cn}
\thanks{The author is supported by NSFC 11871158.}


\subjclass[2010]{Primary   37C85; Secondary 60F05, 37D30.}

\date{}


\keywords{homogeneous dynamics,  central limit theorem, cohomological equation}

\begin{abstract}
The dynamics of one parameter diagonal group actions on  finite  volume homogeneous spaces  has a partially hyperbolic feature. 
In this paper we extend the Liv\v{s}ic
type result to these possibly noncompact and nonaccessible systems. We also prove 
a central limit theorem for the Birkhoff averages of  points on a horospherical orbit. The Liv\v{s}ic type result allows us to show that the variance of the central limit theorem is nonzero provided that the test function has nonzero mean
 with respect to  an invariant probability measure. 
\end{abstract}

\maketitle

\markright{}

\section{Introduction}\label{sec;intro}

Let  $X=G/\Gamma$ be a finite volume
homogeneous space, where $G$ is a connected noncompact semisimple Lie group with finite center 
and $\Gamma$ is  an irreducible lattice of $G$. Recall that a lattice  $\Gamma$ is said 
to be irreducible if  for any noncompact simple factor $N$ of $G$ the   group $N\Gamma$ is dense in $G$. 
 The left translation  action of 
    $G$ on $X$ is (strongly) mixing
    with respect to the probability Haar measure $\mu$.
   The mixing is exponential  when the action of $G$ on $X$ has a strong spectral gap, i.e.,
    the action of   each noncompact simple factor of   $G$ on $X$ has a spectral gap. 
  Recall that the action of a closed  subgroup $H$ of $G$ on $X$ is said to have a spectral gap, 
   if there exists $\delta>0$ and a compactly supported probability measure $\nu$ on $H$ such that  
       for all 
    $ \varphi \in L^2_{\mu, 0}:=\{\varphi \in L^2_\mu: \mu(\varphi)=0  \}$
    one has 
  \[
  \int_X \left |\int_H \varphi(h^{-1}x) \dd \nu(h) \right |^2\dd\mu(x) \le (1-\delta)  \int_X |\varphi(x)|^2\dd\mu(x). 
  \]

      It was proved by Mozes \cite{mozes} that the  system $( X, \mu, G)$  is  mixing of all orders. 
    The effective version of multiple mixing was proved by
     Bj\"orklund-Einsiedler-Gorodnik \cite{beg} under the assumption of the existence of the  strong spectral gap. As an application, they obtained a central limit theorem in \cite{bg}. The aim of this paper is to give a 
     sufficient condition of nonzero variance and prove new central limit theorems 
     based on \cite{shi} where 
      the author proved effective 
   multiple correlations for the trajectory of certain measure $\nu$ singular to $\mu$. 
 
 The irreducibility  assumption for  $\Gamma$  is unnecessary, so we assume from now on that 
 $\Gamma$ is just a lattice of $G$. We consider
 the action of a one parameter $\Ad$-diagonalizable  subgroup   $F=\{a_t: t\in \R  \}$
  on $X$. Here $\Ad$-diagonalizable means the image of  $F$  through  the adjoint representation of $G$ is diagonal with respect to some basis of the Lie algebra.

Let $\phi : X\to \R $ be a continuous   function 
 integrable with respect to $\mu$,
and let $\nu$ be a probability measure on $X$. 
It is said that   $(\phi, \nu, F)$    obeys the  central limit theorem  if the random variables
\begin{align}\label{eq;random}
\frac{1}{\sqrt T}\int_0^T \phi(a_t  x)- \mu(\phi)\dd t \quad \mbox{ given by } (X, \nu )
\end{align}
converge as $T\to \infty$ to the  normal distribution with mean zero and variance 
$\sigma=\sigma(\phi, \nu, F) \ge 0$,
 i.e., for any bounded  continuous   function $f:\R\to \R$, 
 \begin{align}\label{eq;more}
\lim_{T\to \infty}\int_X f\left (\frac{1}{\sqrt T}\int_0^T \phi(a_t  x)- \mu(\phi)\dd t\right ) \dd\nu(x)=
\frac{1}{\sqrt{2\pi \sigma}}\int_{-\infty}^\infty f(s) e^{-\frac{s^2}{2\sigma}} \dd s.
 \end{align}
If $\sigma=0$, then the right hand side of (\ref{eq;more}) is interpreted to be $f(0)$ and we say the central limit theorem is degenerate. 

Let $C_c^\infty(X)$ be the space of compactly supported  smooth and real valued functions on $X$.
It was proved in \cite{bg} that  $(\phi, \mu, F)$  obeys the central limit theorem  
if $\phi\in C_c^\infty (X)$ and the action of $F$ on $X$ is exponential mixing of all orders, e.g.,  the action of $G$ on $X$ has a strong spectral gap \cite{beg}. Some special cases were known earlier in  \cite{sinai}\cite{ratner}\cite{lejan} when 
 $G $  is the isometric group of a hyperbolic manifold with constant negative curvature.  
If in addition  $X$ is compact, then
 the characterization of  nonzero variance  is well understood. 
It was proved by Ratner \cite{ratner} that $\sigma(\phi, \mu, F)=0$   if and only if  
the system of  cohomological equations parameterized by $s> 0$
\begin{align}\label{eq;homology}
\int_0^s \phi(a_t x)-\mu(\phi)\dd t=\varphi(a_sx)-\varphi( x) 
\end{align}
has a measurable  solution $\varphi\in L^2_\mu$. 
Here a measurable solution means (\ref{eq;homology}) holds for $\mu$ almost every $x\in X$. 
Using Liv\v{s}ic's theorem \cite{livsic} which says that a measurable solution to (\ref{eq;homology}) is equal to a continuous function almost everywhere,   Melbourne and T\"or\"ok  \cite{mt} showed that $\sigma(\phi, \mu, F)=0$ if and only if 
\begin{align}\label{eq;periodic}
\int_0^s \phi(a_t x)-\mu(\phi)\dd x=0\quad \mbox{for all } x\in X \mbox{ and } s>0 \mbox{ with } a_sx=x. 
\end{align}

 Our first main result is to extend Liv\v sic's theorem to the  homogeneous space $X$. 
 This will allow us to give a sufficient condition of nonzero variance similar to (\ref{eq;periodic}).
For $a\in G$, we use  $G_a'$ to denote  
the group 
generated by  the unstable horospherical subgroup $G_a^+$  and the stable horospherical subgroup $G_a^-$, where
\begin{align*}
G^+_{a}=\{g\in G:  \lim_{n\to \infty}a^{-n } g a^n\to 1_ G \} \quad \mbox{and }\quad
G^-_{a}=\{g\in G:  \lim_{n\to \infty}a^{n } g a^{-n}\to 1_ G \}.
\end{align*}
Here and hereafter $1_G$ denotes the identity element of a group $G$. 
   The group $G_a'$ is
 a connected  semisimple Lie group without compact factors and it is 
  normal in  $G$.
Let $ \widehat C_c^\infty (X)=C_c^\infty (X)+\R $  be the set of functions which can be written as a sum of a  function in $C_c^\infty(X)$ and a constant.

       \begin{thm}\label{thm;continuous}
       	Let  $X=G/\Gamma$ where $G$ is a connected semisimple Lie group with finite center and  $\Gamma$ is a lattice.  Let $\{a_t : t\in 
       	\R \} $ be a one parameter $\Ad$-diagonalizable subgroup of 
       	$G$ and let  $\mu$ be  the probability Haar measure on $X$.  Suppose $a=a_1$ 
      and  the action of  $G_a'$  on $X$ has a spectral gap. 
       	If $\varphi$ is  a 
       	measurable solution  to the cohomological equation $\psi(x)=\varphi(ax)-\varphi(x)$ where $\psi\in \widehat C_c^\infty (X)$, then $\varphi\in L^2_\mu$  and there is a smooth
       	function
       	$\til \varphi: X\to \R$ such that $\varphi=\til \varphi$  almost everywhere with respect to $\mu$.
       \end{thm}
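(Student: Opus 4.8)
The plan is to first upgrade a measurable solution $\varphi$ to one that is smooth along the horospherical directions, then use the spectral gap to control its $L^2$ norm, and finally combine the two to get global smoothness.

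First I would establish integrability. Since $\psi\in\widehat C_c^\infty(X)$ and $\mu(\psi)=\int\psi\,d\mu$ must equal $\lim_{n\to\infty}\tfrac1n\int(\varphi(a^nx)-\varphi(x))\,d\mu = 0$ by invariance of $\mu$, we may assume $\psi\in C_c^\infty(X)$ after subtracting a constant from $\varphi$. The telescoped cocycle is $\psi_n(x)=\sum_{k=0}^{n-1}\psi(a^kx)=\varphi(a^nx)-\varphi(x)$. The key input is the effective multiple correlation / equidistribution estimates of \cite{shi} together with the spectral gap for $G_a'$: these give exponential decay of correlations $\int \psi(a^kx)\overline{\psi(x)}\,d\mu$ and hence an a priori bound $\|\psi_n\|_{L^2}^2 = O(n)$ uniformly (a variance bound). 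From $\varphi(a^nx)-\varphi(x)=\psi_n(x)$ and the fact that $x\mapsto a^nx$ preserves $\mu$, one deduces that if $\varphi$ were not in $L^2$ this would be consistent, so instead I would argue $L^2$-membership via the horospherical smoothing below, or directly: set $\varphi_N=\tfrac1N\sum_{n=1}^N(\varphi - \varphi\circ a^n) $-type averages — more cleanly, use that $\varphi = \varphi\circ a^{-n} - \psi_n\circ a^{-n}$ and average in $n$ to represent $\varphi$ as a limit of functions with controlled norm, using mixing to show the $\varphi\circ a^{-n}$ contributions die in a suitable weak sense.

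The heart of the argument is smoothness along $G_a^+$ and $G_a^-$. For $u\in G_a^+$, write $\varphi(ux) - \varphi(x)$: iterating the cohomological equation, $\varphi(a^{-n}ux) - \varphi(a^{-n}x) = \varphi(ux)-\varphi(x) - \psi_n(ux)+\psi_n(x)$ after rearranging, and since $a^{-n}ua^n\to 1_G$ contracting, $a^{-n}ux$ and $a^{-n}x$ become exponentially close while $\psi\in C_c^\infty$ is Lipschitz, so $\sum_n|\psi(a^{-n}ux)-\psi(a^{-n}x)|$ converges; this expresses $\varphi(ux)-\varphi(x)$ as an absolutely convergent series that is smooth in $u$. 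Symmetrically for $u\in G_a^-$, running $n\to+\infty$. Thus $\varphi$ agrees a.e.\ with a function smooth along $G_a^+$ and along $G_a^-$, hence along the group $G_a'$ they generate. Since $G_a'$ is normal in $G$ and the $F$-orbit together with $G_a'$-orbits fill out all of $X$ (as $G_a'$ acts with open — indeed full-measure — orbits on each $G$-orbit direction transverse to $F$), I would then invoke an elliptic-regularity / hypoellipticity argument: $\varphi$ is smooth along a spanning set of directions of the Lie algebra, the combined operator is elliptic, so $\varphi=\til\varphi$ is smooth on $X$.

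The main obstacle I anticipate is the $L^2$-integrability and the passage from ``smooth along horospherical leaves'' to ``globally smooth'' in the \emph{noncompact, nonaccessible} setting: the stable and unstable horospheres of $a$ need not generate $G$ (only the normal subgroup $G_a'$), so one cannot directly use an accessibility/Hopf-chain argument, and this is exactly where the spectral gap hypothesis for $G_a'$ and the effective estimates of \cite{shi} must be used to control behaviour in the central (neutral) directions and to prevent escape of mass. I would structure the proof so that the horospherical regularity is obtained first by the elementary contraction argument above, and the spectral gap is reserved for (i) the variance bound giving $\varphi\in L^2_\mu$, and (ii) identifying the central-direction derivatives, after which hypoellipticity finishes the argument.
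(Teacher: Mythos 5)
Your treatment of the horospherical directions is essentially the paper's: the telescoped series $\varphi(ux)-\varphi(x)=\sum_{n\ge 0}\psi(a^nx)-\psi(a^nux)$ for $u\in G_a^-$ (and its counterpart for $G_a^+$) is exactly the holonomy argument of \S\ref{sec;leaf}, though you gloss over the genuine measure-theoretic work needed there (one must pass to the density-value function $\til\varphi$ on the set of measurable continuity points and use Lusin's theorem to verify that the a.e.\ identity really propagates to every point of that set).

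The two remaining components have genuine gaps. First, your opening step deduces $\mu(\psi)=0$ by integrating $\varphi(a^nx)-\varphi(x)$ against $\mu$; but $\varphi$ is only assumed measurable, so $\int\varphi\dd\mu$ need not exist and this computation is not available. Likewise the variance bound $\|\psi_n\|_{L^2}^2=O(n)$ that you extract from exponential decay of correlations gives nothing about $\varphi$: what is actually needed is that $\|\psi_n\|_{L^2}$ is \emph{uniformly bounded}, which holds only when the asymptotic variance vanishes. The paper's route is to run the central limit theorem for the time-one map (available since the system is exponentially mixing of all orders), observe that the mere existence of an a.e.-finite measurable solution forces $\frac{1}{\sqrt n}(\varphi(a^{n-1}x)-\varphi(x))\to 0$ in probability by a tightness argument, conclude the CLT is degenerate (hence $\mu(\psi)=0$ and $\sigma=0$), and then invoke a standard criterion to produce an $L^2_\mu$ coboundary representative, which coincides with $\varphi$ up to a constant by ergodicity. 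None of this is in your sketch. Second, and more seriously, the central directions are not handled. Smoothness along $G_a^+$- and $G_a^-$-orbits does not span $\mathfrak g$: one has $\mathfrak g=\mathfrak g^+\oplus\mathfrak l\oplus\mathfrak g^-$ where $\mathfrak l$ is the Lie algebra of the centralizer $L$ of $a$, and $\mathfrak l$ is in general strictly larger than $\R\log a$ (already for $G=\SL_3(\R)$ with $a$ regular), quite apart from the factors of $G$ not contained in $G_a'$. So there is no elliptic operator to which hypoellipticity could be applied, and "smooth along $G_a^+$ and along $G_a^-$, hence along $G_a'$" is precisely the non-trivial accessibility issue, not a formal consequence. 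The paper's actual mechanism is: a formula for the distributional derivatives of $\varphi$ in the $\mathfrak l$-directions as an exponentially convergent sum $\langle\partial^\alpha\varphi,\phi\rangle=-\sum_{n\ge0}\int_X\partial^\alpha\psi(a^nx)\phi(x)\dd\mu(x)$ (this is where the spectral gap enters), a Fourier-analytic bootstrap on $L$-orbits showing the transform decays faster than any polynomial, and finally Journ\'e's theorem to glue uniform smoothness along the transverse stable, unstable and central foliations into genuine smoothness. Your proposal names the obstacle correctly but supplies no mechanism to overcome it.
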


   In the case where $X$ is compact and accessible,
    Theorem \ref{thm;continuous} is a special case of
   Wilkinson 
   \cite[Thm.~A]{wilkinson}. 
   The accessible assumption in our setting is the same as $G_a'=G$.  
   Our result  is new in the case where $X$   nonaccessible or  $X$ is noncompact. 
   The spectral gap assumption is always satisfied 
   if $G_a'$ is nontrivial, $G$ has no compact factors and $\Gamma$ is irreducible, see Kelmer-Sarnak \cite{ks}. 
We prove the smoothness of $\varphi$ along $G_a'$ orbits using the method of 
 \cite{wilkinson} and  Avila-Santamaria-Viana \cite{asv}. The method allows us to prove  the
  H\"older continuity
 of $\varphi$ along $G_a'$ orbits if we only  assume $\psi$ is   H\"older. 
The H\"older continuity in the case where   $X=\SL_d(\R)/\SL_d(\Z)$  is due to Le Borgne \cite{le}. 
 This part doesn't use the spectral gap assumption. 
 To prove the smoothness   of $\varphi$ along  the central foliations  of $a$
  we 
 need to  use 
 the spectral gap assumption which implies the dynamical system $(X, \mu, a)$ is exponential mixing. 
Here we use  some ideas from 
Gorodnik-Spatzier \cite{gs} and 
Fisher-Kalinin-Spatzier \cite{fks}.
  The smoothness of $\varphi$ then follows from a result of Journ\'e \cite{journe} which says that   a function uniformly   smooth on 
  transverse family of foliations is  smooth. 
 We will prove 
 Theorem \ref{thm;continuous} as well as its refinements where $\psi$ is only assumed to be H\"older continuous 
 in  \S \ref{sec;leaf} and \S \ref{sec;continuous}.

 Now we state our result on the central limit theorem   and give sufficient conditions for  the  
 nonzero variance. In order to define 
 the measure $\nu$ singular to $\mu$ we need to 
 review some concepts from \cite{shi}. 
We first setup the notation. 
 \begin{enumerate}[\quad (n.1)]
 	\item $H$ is a connected  normal subgroup of $G$ without compact factors.
 	\item  $P$  is an absolutely  proper 
 	parabolic subgroup of $H$, i.e.,   $P$  contains none of the simple factors of $H$.
 	\item $U$ is  the unipotent radical of $P$.
 	\item  $A$ is a maximal $\Ad$-diagonalizable subgroup 
 	of $H$ contained in  $P$. 
 \end{enumerate}

   The group  $U$  is said to be  $a$-expanding for some 
      $a\in A$ 
      if, for any nontrivial irreducible   representation 
      $\rho: H \to \GL(V)$ on a finite dimensional real vector space $V$, one has 
      \[
      \lim_{n\to \infty} \rho(a^{-n}) v=0 \quad \forall\ U\mbox{-fixed } v\in V. 
      \]
      The expanding cone of $U$ in $A$ is 
      \[
      A_U^+=\{a\in A: U \mbox{ is } a\mbox{-expanding}  \}
      \]
      and it has nice dynamical properties while translating $U$-slices on $X$.
      One of the main results of \cite{shi} is to explicitly describe the 
      expanding  cone
       $A_U^+$ of $U$
      in $A$.  We postpone the explicit description of $A_U^+$ to \S\ref{sec;review} where 
      we also review effective multiple  correlations. Here we  only give an example which is the motivation of this concept. 
            \begin{ex}\label{ex;main}
      	Let $m, n$ be positive integers.
      	\begin{align*}
      	H=G=\SL_{m+n}(\R), 
      	U =\left \{
      	\left(
      	\begin{array}{cc}
      	1_m & h\\
      	0_{nm} & 1_n 
      	\end{array}
      	\right)\in H: h\in \mathrm{M}_{mn}(\R)
      	\right\} 
      	\end{align*}
      	where $1_m$ and $1_n$ are the identity matrices of rank $m$ and $n$, respectively,
      	$A$ is the identity component of diagonal matrices in $G$,  and 
      	\begin{align}\label{eq;cone example}
      	A^+_U=\{\diag(e^{r_1 }, \ldots, e^{r_m}, e^{-t_1}, \ldots, e^{-t_n})\in G: r_i, t_j>0\}.
      	\end{align}
      \end{ex}

Suppose   $p\in A$  is a  regular element, where regular refers to    the projection of $p$ to each simple factor of $H$ is not the identity element. 
Let $H_p^+$ be 
the unstable horospherical subgroup of $p$ in $H$.
We  assume that $H_p^+\le U$,  $a_1\in A_U^+$ and 
the conjugation of $a_1$ expands $H_p^+$, i.e.,
\[ 
(\star) \qquad
\{a_t:t>0 \}\subset A_{U,p}^{+}\df\{a\in A_U^+ : H_p^+\le   H_a^+ \cap U \}.
\]
The assumption  $(\star)$ is always checkable due to the explicit description of $A_U^+$, see \S \ref{sec;review} for more details. 
 
Let  $\nu=\nu_{q, z}$ be a probability measure on $X$  given 
by  a  compactly supported nonnegative  smooth  function $q$  on 
 $U$  and $z\in X$  
in the following way:
 \begin{align}\label{eq;measure}
 \nu_{q, z}(\varphi)=\int_U \varphi(uz)q(u )\dd \mu_U(u) \quad \forall \varphi\in C_c(X),
 \end{align}
 where $\mu_U$ is a fixed  Haar measure on $U$  with   $\mu_U(f)=1$. 
 Now we are ready to state the central limit theorem.   
\begin{thm}
	\label{thm;central h}
	Let $X=G/\Gamma$ and $\mu$ be as in Theorem \ref{thm;continuous}. 
	Let $H, P, U$ and $A$ be as in $(n.1)$-$(n.4)$. 
Suppose  $F=\{a_t:t\in 
\R \}$ is  a one parameter subgroup of  $H$ satisfying $(\star)$ for a regular element $p\in A$
and 
$\nu=\nu_{q, z}$ is  a probability measure on $X$ given by  (\ref{eq;measure})
for $z\in X$ and $q\in C_c^\infty (U)$. 
Then for all 
$\phi\in \widehat C_c^\infty (X)$ with $\int_X \phi \dd\mu=0$,
 the system  $(\phi, \nu, F)$ obeys the central limit theorem  with variance
\begin{align}\label{eq;variance 0}
\sigma( \phi, F)=\lim_{l\to \infty}2\int_0^l \int_X \phi(a_t x)  \phi(x) \dd\mu(x)\dd t\in [0, \infty).
\end{align}
\end{thm}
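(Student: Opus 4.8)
The plan is to prove the theorem by the classical method of moments, adapting to the singular measure $\nu$ the scheme that Bj\"orklund--Gorodnik use in \cite{bg} for $\nu=\mu$: effective multiple \emph{mixing} is to be replaced by the effective multiple correlation estimates \emph{along the $\nu$-trajectory} proved in \cite{shi} and recalled in \S\ref{sec;review}. Since $\widehat C_c^\infty(X)=C_c^\infty(X)+\R$ and $\mu(\phi)=0$, I may assume $\phi\in C_c^\infty(X)$ with $\mu(\phi)=0$, so $\phi$ is bounded; write $Z_T(x)=\tfrac1{\sqrt T}\int_0^T\phi(a_tx)\dd t$, the random variable in (\ref{eq;random}). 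Because the centred normal law $N(0,\sigma)$ is determined by its moments (its $2p$-th moment is $\tfrac{(2p)!}{2^pp!}\sigma^p$ and Carleman's condition holds), it suffices to prove that as $T\to\infty$
\[
\int_X Z_T^{\,k}\dd\nu\ \longrightarrow\ \begin{cases}0,& k\text{ odd},\\ \dfrac{(2p)!}{2^pp!}\,\sigma(\phi,F)^{\,p},& k=2p,\end{cases}
\]
after which (\ref{eq;more}) for all bounded continuous $f$, and the interpretation of the degenerate case $\sigma=0$ as the limit law $\delta_0$, follow from standard measure theory.

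First I would record the analytic input. Under $(\star)$ with $p$ regular the action of $a_1$ on $(X,\mu)$ is exponentially mixing (this is part of the effective estimates of \cite{shi}), so there are $C,\lambda>0$ and a Sobolev norm $\mathcal S$ with $|\langle\phi\circ a_t,\phi\rangle_\mu|\le Ce^{-\lambda t}\mathcal S(\phi)^2$ for $t\ge0$. Hence $t\mapsto\langle\phi\circ a_t,\phi\rangle_\mu$ is absolutely integrable on $[0,\infty)$, the limit in (\ref{eq;variance 0}) exists and equals the finite number $2\int_0^\infty\langle\phi\circ a_t,\phi\rangle_\mu\dd t$, and it is $\ge 0$ since $\tfrac1l\big\|\int_0^l\phi\circ a_t\dd t\big\|_{L^2_\mu}^2=\int_{-l}^l(1-\tfrac{|r|}{l})\langle\phi\circ a_r,\phi\rangle_\mu\dd r$ tends to it by dominated convergence. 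This already yields $\sigma(\phi,F)\in[0,\infty)$, and also shows the variance does not depend on $\nu$.

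The heart of the argument is the expansion $\int_X Z_T^{\,k}\dd\nu=T^{-k/2}\int_{[0,T]^k}c_k(\mathbf t)\dd\mathbf t$ with $c_k(\mathbf t)=\int_X\prod_{i=1}^k\phi(a_{t_i}x)\dd\nu(x)$. I would fix a cutoff $R=R(T)$ with $R\to\infty$, $R/\sqrt T\to0$, and $e^{-\lambda R}$ smaller than every negative power of $T$ (for instance $R=(\log T)^2$), sort the coordinates, and group consecutive sorted times at distance $\le R$ into blocks $B_1<\dots<B_m$; this splits $[0,T]^k$ into finitely many cluster patterns, on which I would also record whether $\min B_1>R$. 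On each pattern the effective multiple correlation estimate of \cite{shi} yields, up to an error bounded by $e^{-\lambda R}$ times a fixed power of $\mathcal S(\phi)$ and uniform in $T$ and the configuration, the factorisation $c_k(\mathbf t)=\prod_{j=1}^m\mu\big(\prod_{i\in B_j}\phi\circ a_{t_i}\big)$ if $\min B_1>R$, and the same with the $j=1$ factor replaced by $\int_X\prod_{i\in B_1}\phi\circ a_{t_i}\dd\nu$ if $\min B_1\le R$. Two observations then finish the count. Any block of size one contributes a factor $\mu(\phi)=0$, unless it is the first block and meets $[0,R]$, where it contributes the bounded quantity $\int_X\prod_{i\in B_1}\phi\circ a_{t_i}\dd\nu$; so in the main term only patterns all of whose blocks have size $\ge2$ survive. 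And a dimension count — a size-$r$ block with internal gaps $\le R$ lies in a window of length $O(R)$, contributing volume $O(R^{\,r-1})$ per position; a free size-two block, integrated against the absolutely integrable kernel $\langle\phi\circ a_s,\phi\rangle_\mu$, contributes only $O(T)$; a block meeting $[0,R]$ lies in $[0,O(R)]$ — shows that after division by $T^{k/2}$ every pattern with a block of size $\ge3$, with a block meeting $[0,R]$ (a first block of size $r_1$ there yields $O((R/\sqrt T)^{\,r_1})$), or touching $t_i=T$ is $o(1)$, while the errors total $O(T^{k/2}e^{-\lambda R})=o(T^{k/2})$. Hence $k$ must be even, $k=2p$; the surviving patterns correspond to the $\tfrac{(2p)!}{2^pp!}$ perfect matchings of $\{1,\dots,2p\}$; and each matching contributes $\prod_{\ell=1}^p\tfrac1T\int_0^T\!\!\int_{|s|\le R}\langle\phi\circ a_s,\phi\rangle_\mu\dd s\dd t\to\sigma(\phi,F)^{\,p}$, which gives the even-moment limits and forces the odd moments to $0$.

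I expect the main obstacle to be precisely this last, combinatorial step: showing that \emph{every} degenerate configuration — higher-order clusters, configurations interacting with $\nu$ near time $0$ (where $\nu$ is far from $\mu$), and boundary effects near $t_i=T$ — is annihilated by $T^{-k/2}$, which needs the estimate of \cite{shi} in a form uniform enough to be integrated over all these regions with the loss absorbed into $R$. The two features absent from \cite{bg} are the appearance of $\nu$ in place of $\mu$ in the first block (harmless, since that block is then confined to a short window) and the a priori verification, via $(\star)$ and \cite{shi}, of exponential mixing of $(X,\mu,a_1)$. Granting this, deducing (\ref{eq;more}) from the moment convergence, and treating the degenerate case, are routine.
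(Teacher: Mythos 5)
Your overall architecture is the same as the paper's: reduce to moment convergence (the paper invokes the Fr\'echet--Shohat second limit theorem, Theorem \ref{thm;sencond}), establish finiteness and nonnegativity of $\sigma(\phi,F)$ from exponential mixing exactly as in Lemma \ref{lem;variance}(i), decompose the time cube into cluster patterns, factorize each pattern by the effective multiple correlation estimate of \cite{shi} (Theorem \ref{thm;correlation}), kill singleton blocks with $\mu(\phi)=0$ and the near-zero first block by its short time window, and extract the main term from the all-pairs pattern. The combinatorial count and the treatment of the odd case and of the first block (where $\nu\neq\mu$) are all correct in outline.

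There is, however, one genuine gap, precisely at the step you flag as ``the main obstacle'': with a single cutoff $R$, the claim that the factorization error is ``bounded by $e^{-\lambda R}$ times a fixed power of $\mathcal S(\phi)$, uniform in the configuration'' is false. Theorem \ref{thm;correlation} peels off the \emph{last block as a single function} $\phi_{\bt_i-r_{i,\bt}}=\prod_{j\in B_i}\phi_{t_j-r_{i,\bt}}$, and its error involves the full norm $\|\phi_{\bt_i-r_{i,\bt}}\|$, which by Lemmas \ref{lem;growth} and \ref{lem;condition} grows like $e^{\kappa\ell\cdot(\mathrm{diam}\,B_i)}$. Under your clustering the diameter of a block of size $r$ can be as large as $(r-1)R$ while the protecting gap is only guaranteed to exceed $R$, so even for a pair block the error is of order $e^{(\kappa\ell-\delta)R}$, which diverges whenever $\kappa\ell>\delta$ --- the generic situation, since $\delta$ is a small spectral-gap constant while $\kappa$ and $\ell$ are not. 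This is exactly why the paper does not use a single threshold but the geometrically graded one in (\ref{eq;IP}): intra-block gaps in block $i$ are $\le b^{m_i-j}n\log T$, so the block's width is at most $2b^{m_i-m_{i-1}-1}n\log T$, while the gap preceding it exceeds $b^{m_i-m_{i-1}}n\log T$, i.e.\ is larger by the factor $b\ge(4\kappa\ell+2)/\delta$ of (\ref{eq;b}); this makes the decay beat the norm growth and yields the uniform bound $T^{-n}$ in (\ref{eq;ran}). Your scheme needs this hierarchical (or an equivalent block-size-dependent) choice of thresholds; with it, the rest of your argument goes through as in \S\ref{sec;central}.
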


It can be seen from (\ref{eq;variance 0}) that the variance $\sigma(\phi, F)$ does not depend 
on $\nu$ and is the same as   $\sigma(\phi, \mu, F)$. So the sufficient condition
for the nonzero $\sigma(\phi, F)$ below can also be applied to the central limit theorem proved in \cite{bg}.
In the setting of Example \ref{ex;main} with $\Gamma=\SL_{m+n}(\Z)$, 
  Theorem \ref{thm;central h} is due to Bj\"orklund-Gorodnik \cite{bgg}.
The difference between our proof and that in \cite{bgg} is that we do not use commulants.

      \begin{thm}\label{thm;variance}
      	Let the notation and the assumptions  be as in Theorem \ref{thm;central h}. 
Then the variance  $\sigma(F, \phi)$ in (\ref{eq;variance 0}) is equal to zero if and only if 
the system of  cohomological equations 
\begin{align}\label{eq;guoqing}
	\int_0^s \phi(a_t x)\dd t=\varphi(a_sx)-\varphi( x)  \qquad  (s> 0 ) .
\end{align}
 has  a measurable solution $\varphi\in L^2_\mu$.
    \end{thm}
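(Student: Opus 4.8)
The plan is to establish the equivalence by the standard two directions, using Theorem \ref{thm;central h} together with the Liv\v sic-type regularity result of Theorem \ref{thm;continuous}.

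\emph{From a measurable solution to zero variance.} Suppose (\ref{eq;guoqing}) has a measurable solution $\varphi\in L^2_\mu$. Specializing to $s=1$ and writing $a=a_1$, $\psi(x)=\int_0^1\phi(a_tx)\dd t$, we see $\varphi$ solves the cohomological equation $\psi=\varphi\circ a-\varphi$. Since $\phi\in\widehat C_c^\infty(X)$, the function $\psi$ is smooth, bounded and $\mu(\psi)=\mu(\phi)=0$; in particular $\psi\in\widehat C_c^\infty(X)$ up to the negligible issue that it need not be compactly supported. (This is a point I would handle carefully: $\psi$ is a continuous integrable function that is $a$-flow-averaged, so the refinements of Theorem \ref{thm;continuous} stated for H\"older $\psi$, or a direct truncation argument, apply.) Theorem \ref{thm;continuous} then upgrades $\varphi$ to a genuine (smooth, hence bounded on the support considerations aside) function $\til\varphi$ with $\varphi=\til\varphi$ $\mu$-a.e. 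Now I would compute the variance directly from (\ref{eq;variance 0}): using $\int_0^T\phi(a_tx)\dd t=\til\varphi(a_Tx)-\til\varphi(x)$ for a.e.\ $x$ and all $T$ (by the $s$-parameter family, or by differentiating in $s$), we get
\[
\int_0^T\int_X\phi(a_tx)\phi(x)\dd\mu(x)\dd t=\int_X\bigl(\til\varphi(a_Tx)-\til\varphi(x)\bigr)\phi(x)\dd\mu(x),
\]
which is bounded uniformly in $T$ because $\til\varphi\in L^2_\mu$ and $\phi\in L^2_\mu$; hence the limit defining $\sigma(\phi,F)$, which equals $2\lim_T\frac1T\int_0^T(T-t)(\cdots)$-type Cesàro expression, must vanish. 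More cleanly: write $S_T=\int_0^T\phi(a_tx)\dd t=\til\varphi\circ a_T-\til\varphi$, so $\|S_T\|_{L^2}^2\le(2\|\til\varphi\|_{L^2})^2$ is bounded, and since the CLT of Theorem \ref{thm;central h} says $S_T/\sqrt T$ converges in distribution to $\mathcal N(0,\sigma)$ under $\nu$ (and one checks the same under $\mu$, or transfers via the mixing/equidistribution already in \cite{shi}), a bounded-in-$L^2$ sequence divided by $\sqrt T$ forces $\sigma=0$.

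\emph{From zero variance to a measurable solution.} This is the harder direction and the main obstacle. Assume $\sigma(\phi,F)=0$. The strategy is the classical Gordin/Leonov martingale-type or, more in the spirit of this paper, a direct construction: set formally
\[
\varphi(x)=-\int_0^\infty\phi(a_tx)\dd t,
\]
and the task is to show this converges in $L^2_\mu$. The key input is that $\sigma=0$ together with (\ref{eq;variance 0}) gives $\int_0^\infty\int_X\phi(a_tx)\phi(x)\dd\mu(x)\dd t=0$, and more usefully that the correlations $\int_X\phi(a_tx)\phi(x)\dd\mu$ are controlled: by the spectral-gap/exponential-mixing hypothesis feeding Theorem \ref{thm;central h}, these correlations decay (at least) integrably, so the symmetrized sum $\int_0^\infty\int_0^\infty\int_X\phi(a_sx)\phi(a_tx)\dd\mu\,\dd s\,\dd t$ is absolutely convergent; vanishing of $\sigma$ then says this double integral is $0$, i.e.\ $\bigl\|\int_0^T\phi(a_tx)\dd t\bigr\|_{L^2}^2\to 0$? — no, rather it stays bounded, and one must extract an actual limit. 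Concretely I would run the Gordin argument: decompose $\phi=\varphi\circ a-\varphi+\chi$ with $\chi$ a coboundary-orthogonal "martingale" part whose contribution to the variance is $\|\chi\|^2$-positive unless $\chi=0$; $\sigma=0$ forces $\chi=0$, giving the solution at $s=1$. Passing from $s=1$ to all $s>0$: once $\varphi\in L^2_\mu$ solves $\psi_1=\varphi\circ a_1-\varphi$ with $\psi_1(x)=\int_0^1\phi(a_tx)\dd t$, apply Theorem \ref{thm;continuous} to replace $\varphi$ by a continuous $\til\varphi$; then for fixed $t_0\in[0,1)$, the function $\varphi_{t_0}(x)=\til\varphi(a_{t_0}x)-\int_0^{t_0}\phi(a_tx)\dd t$ can be checked to be $a_1$-cohomologous in a compatible way, and assembling over $t_0$ and integer times yields a single $\varphi$ solving (\ref{eq;guoqing}) for every $s>0$. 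The delicate steps are (i) the $L^2$-convergence of the formal sum, which genuinely uses exponential mixing of $(X,\mu,a)$ — exactly the hypothesis already invoked in Theorem \ref{thm;continuous} — and I expect to cite the same spectral-gap machinery; and (ii) the interpolation in $s$, which is bookkeeping using the flow relation $\psi_s(x)=\psi_{s'}(x)+\psi_{s-s'}(a_{s'}x)$ and is where Theorem \ref{thm;continuous}'s continuous representative is essential to make pointwise manipulations legitimate.

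I would present the second direction first reducing to time-one via the Gordin decomposition, then invoke Theorem \ref{thm;continuous} for regularity, then do the $s$-interpolation, and finally note the first direction is the short computation above.
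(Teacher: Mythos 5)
The forward direction (solution $\Rightarrow$ $\sigma=0$) is essentially right, though over-engineered: since $\varphi\in L^2_\mu$ is already part of the hypothesis, you do not need Theorem \ref{thm;continuous} at all here, nor the CLT under $\nu$. The paper simply uses the identity $\sigma(\phi,F)=\lim_T\frac1T\int_X\bigl(\int_0^T\phi_t\dd t\bigr)^2\dd\mu$ (established when proving $\sigma\ge 0$), substitutes $\int_0^T\phi_t=\varphi\circ a_T-\varphi$, and concludes from $\|\varphi\circ a_T-\varphi\|_{L^2_\mu}\le 2\|\varphi\|_{L^2_\mu}$. Your worry about $\psi(x)=\int_0^1\phi(a_tx)\dd t$ not lying in $\widehat C_c^\infty(X)$ is unfounded: the time-one average of a function in $C_c^\infty(X)+\R$ is again in $C_c^\infty(X)+\R$.

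The converse direction has a genuine gap. Your actual mechanism is ``run the Gordin argument'': decompose $\phi=\chi+(\varphi\circ a-\varphi)$ with $\chi$ a martingale part. But a martingale--coboundary decomposition requires an increasing filtration adapted to the dynamics together with approximation estimates for $\phi$ along it, none of which you construct; in this partially hyperbolic, noncompact homogeneous setting that is a substantial piece of work, not a citation. The paper avoids it entirely. Setting $\eta=\int_0^1\phi_t\dd t$ and $\xi_n=\sum_{i=0}^{n-1}\eta\circ a^i$, it expands $\|\xi_n\|_{L^2_\mu}^2=n\mu(\eta^2)+2\sum_{i=1}^{n-1}(n-i)\mu((\eta\circ a^i)\eta)$, uses $\sigma=0$ to solve for $\mu(\eta^2)=-2\sum_{i\ge1}\mu((\eta\circ a^i)\eta)$, and then the exponential decay of correlations (Lemma \ref{lem;mixing}) makes the resulting expression $-2n\sum_{i\ge n}\mu((\eta\circ a^i)\eta)-2\sum_{i<n}i\,\mu((\eta\circ a^i)\eta)$ uniformly bounded. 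Boundedness alone then suffices: by Alaoglu a subsequence $\xi_{n_i}$ converges weak-$*$ to $-\varphi\in L^2_\mu$, and since $\xi_{n}-\xi_{n}\circ a_s=\int_0^s\phi_t\dd t-\int_0^s\phi_{t+n}\dd t$ with the last term tending to $0$ weakly by mixing, this single weak limit solves (\ref{eq;guoqing}) for every $s>0$ simultaneously --- no norm convergence of $\int_0^\infty\phi_t\dd t$ is needed (the point at which you hesitated), and no time-one reduction followed by interpolation in $s$ is needed either. You correctly sense that the uniform $L^2$ bound is the crux, but you neither derive it from $\sigma=0$ plus correlation decay nor supply the weak-compactness step that converts it into a transfer function; as written, the hard direction rests on an unconstructed martingale structure.
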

This theorem together with Theorem \ref{thm;continuous} allow us to give a sufficient  condition 
for the nonzero variance. 

\begin{de}
	A function $\phi\in \widehat C_c^\infty(X)$ is said to be  dynamically null  with respect to   $( X, F)$ if for any  $F$-invariant  probability measure $\til \mu$ on $X$ one has 
	$\til\mu (\phi)= 0$. 
\end{de}
 
     \begin{thm}
     	\label{thm;checkable}
     	Let the notation and assumptions  be as in Theorem \ref{thm;central h}.
    If 
     	the variance $\sigma(F, \phi)$  in (\ref{eq;variance 0}) is zero, then the function 
$\phi$ is dynamically null with respect to $(X, F)$. 
     \end{thm}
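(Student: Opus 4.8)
The plan is to chain Theorem~\ref{thm;variance} with the Liv\v{s}ic-type Theorem~\ref{thm;continuous}: from a vanishing variance we first manufacture a cohomological solution that is not merely measurable but a continuous function defined at \emph{every} point, and then we play this continuous transfer function off against Poincar\'e recurrence for an arbitrary $F$-invariant probability measure. So suppose $\sigma(F,\phi)=0$. By Theorem~\ref{thm;variance} there is a measurable $\varphi\in L^2_\mu$ solving \eqref{eq;guoqing}. Taking $s=1$ and setting $\psi(x)\df\int_0^1\phi(a_tx)\dd t$, I would first check $\psi\in\widehat C_c^\infty(X)$: writing $\phi=\phi_0+c_0$ with $\phi_0\in C_c^\infty(X)$ and $c_0\in\R$, the function $x\mapsto\int_0^1\phi_0(a_tx)\dd t$ is smooth with support inside the compact set $\bigcup_{0\le t\le 1}a_{-t}\supp\phi_0$, so $\psi$ is such a function plus the constant $c_0$. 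Since $\psi(x)=\varphi(a_1x)-\varphi(x)$, Theorem~\ref{thm;continuous} applies with $a=a_1$; its spectral-gap hypothesis holds in the present setting because $(\star)$ and the regularity of $p$ give $H_p^+\le H_{a_1}^+\le G_{a_1}'$, so the normal-in-$G$ subgroup $G_{a_1}'$ contains $H$, and hence a spectral gap for the $H$-action on $X$ transfers to $G_{a_1}'$. We thus obtain a smooth $\til\varphi\colon X\to\R$ with $\varphi=\til\varphi$ $\mu$-almost everywhere.

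The next step is to upgrade \eqref{eq;guoqing} to an identity valid at every point of $X$. Fix $s>0$. Since $a_s$ preserves $\mu$, replacing $\varphi$ by $\til\varphi$ in \eqref{eq;guoqing} gives $\int_0^s\phi(a_tx)\dd t=\til\varphi(a_sx)-\til\varphi(x)$ for $\mu$-a.e.\ $x$. Both sides are continuous functions of $x$ --- the left side because $\phi$ and the flow $(t,x)\mapsto a_tx$ are continuous, the right side because $\til\varphi$ is --- and $\mu$ has full support; hence the two sides, agreeing on a dense set, agree everywhere. Write $(\dagger)$ for this everywhere-valid identity.

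For the last step, let $\til\mu$ be any $F$-invariant probability measure on $X$, and suppose for contradiction that $\til\mu(\phi)=c\neq0$. The function $\phi$ is bounded, hence in $L^1(\til\mu)$, so Birkhoff's ergodic theorem produces, on a set of full $\til\mu$-measure, the limit $\bar\phi(x)=\lim_{s\to\infty}\tfrac1s\int_0^s\phi(a_tx)\dd t$ with $\int_X\bar\phi\dd\til\mu=\til\mu(\phi)=c\neq0$; in particular $\til\mu\{\bar\phi\neq 0\}>0$. By Poincar\'e recurrence for the $\til\mu$-preserving transformation $a_1$, $\til\mu$-a.e.\ $x$ is recurrent, so there is a sequence $s_k\to\infty$ with $a_{s_k}x\to x$. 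Choose $x$ that is recurrent and satisfies $\bar\phi(x)\neq0$. Along $s_k$, the left side of $(\dagger)$ equals $s_k\cdot\tfrac1{s_k}\int_0^{s_k}\phi(a_tx)\dd t$, whose absolute value tends to $\infty$ since $s_k\to\infty$ and $\tfrac1{s_k}\int_0^{s_k}\phi(a_tx)\dd t\to\bar\phi(x)\neq0$, while its right side $\til\varphi(a_{s_k}x)-\til\varphi(x)$ tends to $0$ by continuity of $\til\varphi$. This contradiction forces $\til\mu(\phi)=0$, and since $\til\mu$ was arbitrary, $\phi$ is dynamically null with respect to $(X,F)$.

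The one genuinely delicate point, I expect, is the passage from the $\mu$-a.e.\ solution $\varphi$ to an assertion about $\til\mu$, which may well be singular to $\mu$ --- carried, say, by a single periodic $F$-orbit --- so that a merely measurable, $\mu$-a.e.-defined transfer function is useless. It is exactly the Liv\v{s}ic regularization to an everywhere-defined continuous $\til\varphi$, and the resulting everywhere-valid identity $(\dagger)$, that let Poincar\'e recurrence pit the linear growth of $\int_0^s\phi(a_tx)\dd t$ against the boundedness forced by continuity of $\til\varphi$ along a recurrent orbit. Birkhoff's theorem and Poincar\'e recurrence are then invoked off the shelf, with no ergodicity or other structural hypothesis on $\til\mu$; the only verification needed is the spectral-gap point in the first paragraph, which is automatic when (e.g.) $\Gamma$ is irreducible and $G$ has no compact factors.
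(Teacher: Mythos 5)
Your proposal is correct and follows essentially the same route as the paper: Theorem \ref{thm;variance} produces a measurable solution, Theorem \ref{thm;continuous} (applied to $\psi(x)=\int_0^1\phi(a_tx)\dd t$ after verifying the spectral gap for $G_{a_1}'$) upgrades it to a continuous everywhere-defined coboundary, and Birkhoff plus Poincar\'e recurrence then force $\til\mu(\phi)=0$. The only differences are cosmetic — you use the non-ergodic Birkhoff theorem and argue by contradiction, while the paper reduces to ergodic $\til\mu$ and argues directly — and you supply slightly more detail on the full-support/continuity upgrade of the a.e.\ identity.
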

 
 The above theorem says that the central limit theorem is nondegenerate if 
 the integral of $\phi$ with respect to an
 $F$-invariant probability measure   is nonzero.

\textbf{Acknowledgements:}
I would like to thank  Seonhee Lim, Weixiao Shen and Jiagang Yang for the discussions related to this work.

\section{preliminary}
\label{sec;review}

In this section we review some facts   and prove a couple of auxiliary results.
Let the notation and assumptions  be as in Theorem \ref{thm;central h}.
 Some results in this section are also used  in the proof of
Theorem \ref{thm;continuous} where we take  $H=G_a'$. 

We first  state the explicit description of the expanding cone and the result of effective multiple 
 correlations in \cite{shi}. 
 Let $\mathfrak h, \mathfrak u$ and $\mathfrak a $ be the Lie algebras of $H, U$ and $ A$,  respectively.
Let $\Ad: H \to \GL(\mathfrak h)$ and $\mathrm{ad}: \mathfrak h\to \mathrm{End}(\mathfrak h)$ be the adjoint representations of the  Lie group  and the Lie algebra, respectively.

Recall that we assume $(\star)$ holds, namely,  $a_1$ belongs to the expanding cone  $A_U^+ $ and the conjugation of  $a_1$ expands a horospherical subgroup $H_p^+$ contained in  $U$. The latter means that the eigenvalues of  $\mathrm{Ad}(a_1)$ on  the Lie algebra  of  $H_p^+$ are bigger than one.  The assumption $a_1\in A_U^+$
is also easy to check and the details are given below.

Let $\Phi(\mathfrak u)$ be the set of nonzero linear forms $\beta$ on $\mathfrak a$ such that there exists a nonzero $v\in \mathfrak u$ with 

\[
\mathrm{ad}(s) v={\beta(s)} v\quad \forall\  s\in \mathfrak a.  
\]
Let $B(\cdot, \cdot)$ be the Killing form on  $\mathfrak h$. 
Then for each $\beta \in \Phi(\mathfrak u)$, there exists a unique $s_\beta \in \mathfrak a$ such that 
\[
B(s_\beta, s)=\beta(s)\quad \forall\  s\in \mathfrak a. 
\]
\begin{thm}\cite{shi}
	\label{thm;cone}
	The expanding cone $A_U^+= \exp \mathfrak a_{\mathfrak u}^+$ where 
	$\mathfrak a_{\mathfrak u}^+ = \{\sum_{\beta\in \Phi(\mathfrak u)} t_\beta h_\beta: t_\beta>0 \}$.
\end{thm}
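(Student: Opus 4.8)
The plan is to rephrase the condition defining $A_U^+$ as a positivity condition on restricted weights, and then to recognise the resulting cone. Fix $a=\exp s$ with $s\in\mathfrak a$. Since $U$ is connected and unipotent, a vector in the space $V$ of a representation $\rho$ of $H$ is $U$-fixed if and only if it is annihilated by $\mathfrak u$; as $A$ normalises $U$, the subspace $V^{\mathfrak u}$ of $U$-fixed vectors is $\Ad$-stable, hence a sum of $\mathfrak a$-weight spaces, and $\rho(a^{-n})$ acts on its $\mu$-weight part by $e^{-n\mu(s)}$. Thus $U$ is $a$-expanding exactly when $\mu(s)>0$ for every $\mathfrak a$-weight $\mu$ occurring in $V^{\mathfrak u}$ and every nontrivial irreducible representation $V$ of $H$ (complexifying if needed, which leaves the weights unchanged). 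Writing $\mathcal W\subseteq\mathfrak a^{*}$ for the set of all these weights, we obtain $\log A_U^{+}=\{s\in\mathfrak a:\mu(s)>0\text{ for all }\mu\in\mathcal W\}$. I would also record that $0\notin\mathcal W$: if $v\in V^{\mathfrak u}$ had zero $\mathfrak a$-weight then, applying for each $\beta\in\Phi(\mathfrak u)$ the rank-one subalgebra spanned by $\mathfrak h_{\pm\beta}$, one finds $\mathfrak h_{-\beta}v=0$ as well, so $v$ is killed by $\mathfrak a\oplus\mathfrak u\oplus\mathfrak u^{-}$ (with $\mathfrak u^{-}$ the opposite nilradical); this set generates an ideal of $\mathfrak h$ meeting every simple factor because $P$ is absolutely proper, hence all of $\mathfrak h$, which would force $V$ to be trivial. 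Consequently $\log A_U^{+}$ is the interior of the dual cone $\mathrm{cone}(\mathcal W)^{\vee}$.

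It therefore suffices to compute $\mathrm{cone}(\mathcal W)$, and the claim will be
\[
\mathrm{cone}(\mathcal W)=D:=\{\mu\in\mathfrak a^{*}:B(\mu,\beta)\ge 0\text{ for all }\beta\in\Phi(\mathfrak u)\},
\]
where $B$ also denotes the form induced on $\mathfrak a^{*}$. Granting this, the isomorphism $\mathfrak a\cong\mathfrak a^{*}$ induced by $B$ sends $s_\beta$ to $\beta$, so $D=\mathrm{cone}(\{s_\beta:\beta\in\Phi(\mathfrak u)\})^{\vee}$; hence $\mathrm{cone}(\mathcal W)^{\vee}=D^{\vee}=\mathrm{cone}(\{s_\beta\})$, a finitely generated closed cone, and so $\log A_U^{+}=\mathrm{int}\,\mathrm{cone}(\{s_\beta\})=\{\sum_{\beta\in\Phi(\mathfrak u)}t_\beta s_\beta:t_\beta>0\}=\mathfrak a_{\mathfrak u}^{+}$ provided $\Phi(\mathfrak u)$ spans $\mathfrak a^{*}$. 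Exponentiating then gives $A_U^{+}=\exp\mathfrak a_{\mathfrak u}^{+}$.

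The inclusion $\mathrm{cone}(\mathcal W)\subseteq D$ is the easy half: if $0\ne v\in V_\mu\cap V^{\mathfrak u}$ and $\beta\in\Phi(\mathfrak u)$, then $v$ is killed by the root space $\mathfrak h_\beta\subseteq\mathfrak u$, so inside the rank-one subalgebra attached to $\pm\beta$ it is a sum of highest-weight vectors, whence the coroot eigenvalue is nonnegative, i.e.\ $B(\mu,\beta)\ge 0$, so $\mu\in D$. For the reverse inclusion $D\subseteq\mathrm{cone}(\mathcal W)$ I would combine two inputs. First, the highest-weight vector of any nontrivial irreducible $V$ is killed by all positive restricted root spaces, in particular by $\mathfrak u$, so every such highest weight lies in $\mathcal W$; since these generate the restricted dominant cone $\mathcal D=\{\mu:B(\mu,\alpha)\ge 0\text{ for all }\alpha\in\Delta\}$, we get $\mathcal D\subseteq\mathrm{cone}(\mathcal W)$. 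Second, let $I\subsetneq\Delta$ be the set of simple restricted roots whose root space is not contained in $\mathfrak u$, so that $\Phi(\mathfrak u)=\Sigma^{+}\setminus\Sigma_I$ with $\Sigma_I=\Sigma\cap\mathrm{span}(I)$; the Levi $L$ of $P$ normalises $U$, hence $\rho(L)V^{\mathfrak u}=V^{\mathfrak u}$, and as $N_L(A)/Z_L(A)$ is the subgroup $W_I$ of the restricted Weyl group generated by the reflections in $I$, the set $\mathcal W$---and hence $\mathrm{cone}(\mathcal W)$---is $W_I$-invariant. Since $W_I$ permutes $\Phi(\mathfrak u)=\Sigma^{+}\setminus\Sigma_I$, the cone $D$ is $W_I$-invariant as well, and because $\Sigma^{+}=\Sigma_I^{+}\sqcup\Phi(\mathfrak u)$ the $\Sigma_I$-dominant chamber (a fundamental domain for $W_I$) meets $D$ exactly in $\mathcal D$; therefore $D=W_I\mathcal D\subseteq W_I\,\mathrm{cone}(\mathcal W)=\mathrm{cone}(\mathcal W)$, which together with the previous inclusion yields $\mathrm{cone}(\mathcal W)=D$. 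Finally, $\Phi(\mathfrak u)$ spans $\mathfrak a^{*}$: if $\xi\in\mathfrak a$ annihilates $\Phi(\mathfrak u)$ it annihilates $\Delta\setminus I$, and since $P$ is absolutely proper there is, for each $\alpha\in I$, a path in the Dynkin diagram from $\alpha$ to $\Delta\setminus I$ through vertices of $I$ whose tail partial sums are all roots lying in $\Phi(\mathfrak u)$; telescoping along such a path gives $\xi(\alpha)=0$, so $\xi=0$.

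The step I expect to be the main obstacle is the inclusion $D\subseteq\mathrm{cone}(\mathcal W)$: highest-weight vectors alone only account for the dominant cone $\mathcal D$, which is in general strictly smaller than $D$, so one must exhibit enough genuinely non-dominant weights of $V^{\mathfrak u}$, and the conceptual device that makes this work is the $W_I$-equivariance coming from the Levi action on $V^{\mathfrak u}$ together with the chamber identity $D=W_I\mathcal D$. Carrying this out cleanly also requires some care with the bookkeeping of the (possibly non-reduced) restricted root system and with the passage between real and complex representations and between restricted and absolute weights.
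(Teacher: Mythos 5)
The paper does not actually prove Theorem \ref{thm;cone}: it is imported verbatim from \cite{shi}, so there is no internal proof to compare against. Judged on its own terms, your argument is a legitimate and essentially complete route to the statement: the reduction of $a$-expansion to positivity of the set $\mathcal W$ of $\mathfrak a$-weights occurring in the spaces $V^{\mathfrak u}$, the inclusion $\mathrm{cone}(\mathcal W)\subseteq D$ via $\mathfrak{sl}_2$-triples attached to each $\beta\in\Phi(\mathfrak u)$, the reverse inclusion via highest restricted weights together with the $W_I$-equivariance of $V^{\mathfrak u}$ coming from the Levi and the chamber identity $D=W_I\mathcal D$, the final dualization $D^{\vee}=\mathrm{cone}(\{s_\beta\})$, and the Dynkin-diagram argument that $\Phi(\mathfrak u)$ spans $\mathfrak a^{*}$ because $P$ is absolutely proper, all check out (and they reproduce the cone (\ref{eq;cone example}) in Example \ref{ex;main}).

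Two points deserve tightening. First, the step ``consequently $\log A_U^{+}$ is the interior of $\mathrm{cone}(\mathcal W)^{\vee}$'' is not a formal consequence of $0\notin\mathcal W$: for an infinite set $\mathcal W$ the locus $\{s:\mu(s)>0\ \forall\mu\in\mathcal W\}$ can be strictly larger than the interior of the dual of the closed cone it generates, since strict inequalities may degenerate along limits of rays. The identification does hold here, but only once one knows that every nonzero element of $D$ is a finite nonnegative combination of elements of $\mathcal W$ --- which is exactly what your $D=W_I\mathcal D\subseteq\mathrm{cone}(\mathcal W)$ step supplies --- so it should be invoked after, not before, the computation of $\mathrm{cone}(\mathcal W)$. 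Second, the assertion that the highest restricted weights of nontrivial irreducible representations of the group $H$ (not merely of $\mathfrak h_{\mathbb C}$) positively generate $\mathcal D$ needs a word of justification; the cleanest device is, for each fundamental restricted weight $\varpi_i$, the line $\bigwedge^{d_i}\mathfrak p_i$ inside $\bigwedge^{d_i}\mathfrak h$ (with $\mathfrak p_i$ the maximal standard parabolic subalgebra omitting $\alpha_i$ and $d_i=\dim\mathfrak p_i$), which is fixed by all positive restricted root spaces, has weight a positive multiple of $\varpi_i$, and projects nontrivially to an irreducible summand. With these two repairs the argument is complete.
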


In the statement of the  effective multiple correlations we need to  use the
$(2, \ell)$-Sobolev norm 
for a positive integer $\ell$.
Recall that a vector field on a smooth manifold  $Y$ is a smooth section of the tangent bundle of $Y$. 
A vector field $v$ on $Y$ defines a partial differential operator $\partial^ v$ on 
  $C^\infty (Y)$. 
 Give 
$\alpha=(v_1, \ldots, v_k)$ where $v_i \ (1\le i\le k)$ are vector fields on $Y$ we take  $\partial ^\alpha=\partial ^{v_1}\cdots \partial ^{v_k}$ and $|\alpha|=k$.
We also allow $\alpha$ to be the null set  where $\partial ^\alpha$ is 
the identity operator and $|\alpha|=0$. 

Elements of the Lie algebra
$\mathfrak g$ are naturally identified with the right invariant vector fields on $G$
 which descend naturally
to vector fields on $X$.
For every  $v\in \mathfrak g$ 
 we use the same notation  $v$ for the induced vector field on $X$.  
We fix a basis $\mathfrak b$ of $\mathfrak g$ consisting of eigenvectors of $\Ad (a_1)$ and denote 
\[
\|\phi\|_\ell =\max_{|\alpha|\le \ell} \|\partial^\alpha \phi \|_{L^2_\mu},
\]
where the maximum is taken over all the  $k$-tuples $\alpha$ ($0\le k\le \ell $) with alphabet $\mathfrak b$.

We fix an inner product on $\mathfrak g$ such that elements of $\mathfrak b$ 
are orthogonal to each other. 
Let $\mathrm{d}_G$ be   the  Riemannian distance on  
$G$ given by a right invariant Riemannian manifold structure induced from the inner product on $\mathfrak g$.
The advantage of $\mathrm d_G$  is that there exists $\kappa >0$ such that 
\begin{align}
\label{eq;joy}
\mathrm d_G (1_G, a_t g a_{-t})\le e^{\kappa |t|} \mathrm d_G (1_G,  g )
\end{align}
for any $t\in \R$ and $g\in G$. 
The Lipschitz norm on $\widehat C_c^\infty(X)$ is defined by
\begin{align*}
\|\phi \|_{\mathrm{Lip}}&=\sup _{g,h\in G , g\Gamma\neq h\Gamma}\frac{|\phi(g\Gamma)-\phi(h\Gamma)|}{\mathrm{d}(g\Gamma, h\Gamma)}
\quad  \mbox{ where }
\quad 
\mathrm{d}(g\Gamma,h\Gamma)=\inf_{\gamma \in \Gamma} {\mathrm{d}_G(g\gamma, h)}. 
\end{align*}
The sup norm on 
  $\widehat C_c^\infty(X)$
is  defined by 
\[
\|\phi\|_{\sup}=\sup_{x\in X}|\phi(x)|.
\]

Recall that $\nu=\nu_{q, z}$ is a fixed probability measure on $X$ given by 
$z\in X$ and  $q\in C_c^\infty(U)$ through the formula (\ref{eq;measure}). 
\begin{thm}[\cite{shi} Thm.~4.5]\label{thm;correlation}
	There exists an absolute constant  $\delta>0$,  $\ell \in \N $ and $M\ge 1$ with the following properties: for any positive integer $k$,  any $\phi_i\in\widehat  C_c^\infty(X)\ (1\le i\le k)$ and any   real numbers $ t_1, t_2 ,\cdots ,t_k$ one has 
	\begin{equation}\label{eq;many}
	\begin{aligned}
	&\left| \int_X \prod_{i=1}^k\phi_i (a_{t_i} x) \dd \nu(x)-\int_X\phi_k\dd\mu\cdot
	\int_X \prod_{i=1}^{k-1}\phi_i (a_{t_i} x) \dd \nu(x)
	\right|      \\
	\le  & Mk \cdot  \max_{1\le i\le k} \|\phi_i\| \cdot (\max_{1\le i\le k} \|\phi_i \|_{\sup})^{k-1}\cdot
	e^{-\delta\min\{t_k, t_k-t_1, t_k-t_2, \ldots, t_k-t_{k-1}  \}},
	\end{aligned}
	\end{equation}	
where $\|\cdot \|$ is the norm on $\widehat C_c^\infty (X)$ given by 
\[
\|\phi\|=\max \{ \|\phi\|_{\sup}, \|\phi\|_\ell, \|\phi\|_{\mathrm{Lip}}    \}\quad \forall\  \phi\in \widehat C_c^\infty (X). 
\]

\end{thm}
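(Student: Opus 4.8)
The plan is to peel off the last factor $\phi_k$ and thereby reduce the whole estimate to a single instance of \emph{effective equidistribution of expanding $U$-slices carrying a bounded weight}, which we then prove by a ``banana'' thickening argument fed into the exponential mixing of $(X,\mu,a_1)$ supplied by the spectral gap hypothesis. First, a reduction: if $\min\{t_k,t_k-t_1,\dots,t_k-t_{k-1}\}$ is nonpositive or merely bounded, then the right-hand side of (\ref{eq;many}) already dominates the trivial bound $2(\max_i\|\phi_i\|_{\sup})^k\le 2\max_i\|\phi_i\|\,(\max_i\|\phi_i\|_{\sup})^{k-1}$ once $M$ is large, so we may assume $t_k\ge t_j$ for all $j<k$ and that this minimum is large; in particular $t_k>0$. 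Put $\rho(u):=q(u)\prod_{i=1}^{k-1}\phi_i(a_{t_i}uz)$, a bounded function on $U$ with $\supp\rho\subseteq\supp q$ and $\|\rho\|_{\sup}\le\|q\|_{\sup}(\max_i\|\phi_i\|_{\sup})^{k-1}$; then the bracket in (\ref{eq;many}) equals $\int_U\rho(u)\phi_k(a_{t_k}uz)\dd\mu_U(u)-\mu(\phi_k)\int_U\rho\dd\mu_U$, and it suffices to bound this by $\lesssim e^{-\delta' t_k}\|\phi_k\|\,(\|\rho\|_{\sup}+\|\rho\|_{L^1})$ with $\delta',\ell$ absolute and all other constants depending only on the fixed data $q,z$.

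Next, the thickening. Fix a subgroup $W\le G$ with $\mathfrak w\oplus\mathfrak u=\mathfrak g$, so that $(w,u)\mapsto wuz$ is a diffeomorphism near $(1_G,1_G)$, and split $\mathfrak w=\mathfrak w^{\le 0}\oplus\mathfrak w^{+}$ according to the sign of the $\mathrm{ad}(\log a_1)$-eigenvalues. For small $\varepsilon>0$ set $\varepsilon_+:=e^{-\kappa t_k}\varepsilon$ and let $\chi\in C_c^\infty(W)$ be a nonnegative bump of total mass $1$, of width $\varepsilon$ in the $\mathfrak w^{\le0}$-directions and width $\varepsilon_+$ in the $\mathfrak w^{+}$-directions; let $\Theta$ be the smooth density of the pushforward of $\rho(u)\chi(w)\dd\mu_U(u)\dd w$ under $(w,u)\mapsto wuz$. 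Since $a_{t_k}wuz=(a_{t_k}wa_{-t_k})\,a_{t_k}uz$, the right invariance of $d_G$ and (\ref{eq;joy}) give $d(a_{t_k}wuz,a_{t_k}uz)\le d_G(a_{t_k}wa_{-t_k},1_G)\lesssim\varepsilon$ for $w\in\supp\chi$, whence $\bigl|\int_U\rho\,(\phi_k\circ a_{t_k})\dd\mu_U-\int_X(\phi_k\circ a_{t_k})\Theta\dd\mu\bigr|\lesssim\varepsilon\,\|\rho\|_{L^1}\|\phi_k\|_{\mathrm{Lip}}$; moreover $\int_X\Theta\dd\mu=\int_U\rho\dd\mu_U$, and a volume count gives $\|\Theta\|_{L^2_\mu}\lesssim\|\rho\|_{\sup}\,\varepsilon^{-\dim\mathfrak w^{\le0}/2}\varepsilon_+^{-\dim\mathfrak w^{+}/2}=\|\rho\|_{\sup}\,\varepsilon^{-\dim\mathfrak w/2}e^{\kappa t_k\dim\mathfrak w^{+}/2}$.

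Now invoke mixing. The spectral gap hypothesis (for $G_a'$, equivalently for the group generated by the horospherical subgroups of $a=a_1$) yields, via quantitative Howe--Moore, an absolute $\delta''>0$ and a fixed $\ell$ with $\bigl|\int_X(\phi_k\circ a_{t_k})\Theta\dd\mu-\mu(\phi_k)\int_X\Theta\dd\mu\bigr|\lesssim e^{-\delta'' t_k}\|\phi_k\|_\ell\|\Theta\|_{L^2_\mu}$. Combining with the previous paragraph, the reduced quantity is $\lesssim\varepsilon\|\rho\|_{L^1}\|\phi_k\|_{\mathrm{Lip}}+e^{-(\delta''-\kappa\dim\mathfrak w^{+}/2)t_k}\varepsilon^{-\dim\mathfrak w/2}\|\rho\|_{\sup}\|\phi_k\|_\ell$; choosing $\varepsilon$ to balance the two summands produces the exponential decay $e^{-\delta' t_k}$, hence (since $t_k\ge\min\{t_k,t_k-t_i\}$) the decay $e^{-\delta'\min\{t_k,t_k-t_i\}}$. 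Substituting $\rho=q\prod_{i<k}\phi_i(a_{t_i}\,\cdot\,z)$ and using $\|\phi_k\|_{\mathrm{Lip}},\|\phi_k\|_\ell,\|\phi_k\|_{\sup}\le\|\phi_k\|\le\max_i\|\phi_i\|$ and $\|\rho\|_{L^1}\lesssim\|\rho\|_{\sup}$ turns this into (\ref{eq;many}), the factor $k$ and all $q,z$-dependent constants being absorbed into $M$.

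The delicate point is the balancing in the last step. The transverse algebra $\mathfrak w$ generically contains directions $\mathfrak w^{+}$ that are \emph{expanded} by the flow — because $\mathfrak u$ need not contain the full unstable horospherical algebra of $a_1$ — which is why the anisotropic width $\varepsilon_+=e^{-\kappa t_k}\varepsilon$ and the factor $e^{\kappa t_k\dim\mathfrak w^{+}/2}$ in $\|\Theta\|_{L^2_\mu}$ appear; the argument as written closes only when $\delta''>\kappa\dim\mathfrak w^{+}/2$. To remove this restriction one expects to replace the single translation by $a_{t_k}$ with an iteration over $O(t_k)$ unit-time steps, interlacing a cheap re-thickening with one application of exponential mixing at each step so that the expansion incurred per step stays bounded; here the explicit description of $A_U^+$ in Theorem \ref{thm;cone}, and the consequent control of $\det\mathrm{Ad}(a_t)|_{\mathfrak u}$ and of the geometry of $W$, are what make the iteration uniform in the parameters. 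I expect this iterative bookkeeping, together with the verification of the $L^2$-form of quantitative mixing from the spectral gap, to be the technical heart; the reductions and the peeling of $\phi_k$ are routine.
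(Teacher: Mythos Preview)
The theorem is quoted from \cite{shi} and is not proved in the present paper; the only closely related argument here is the sketch of Lemma~\ref{lem;orders}, which follows \cite{shi} with $\nu$ replaced by $\mu$. So I compare your proposal to that sketch.

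Your argument has a genuine error, and it is not the one you flag. The mixing estimate you invoke,
\[
\Bigl|\int_X(\phi_k\circ a_{t_k})\,\Theta\,\dd\mu-\mu(\phi_k)\mu(\Theta)\Bigr|\ \lesssim\ e^{-\delta'' t_k}\,\|\phi_k\|_\ell\,\|\Theta\|_{L^2_\mu},
\]
is not available: quantitative decay of matrix coefficients on $L^2_0$ requires a Sobolev (or $K$-finite) norm on \emph{both} vectors, as in Lemma~\ref{lem;mixing} where the bound is $\|\phi\|_{\ell_0}\|\psi\|_{\ell_0}$. Unitarity forces $\|\pi(a_t)|_{L^2_0}\|=1$, so an $L^2$ norm on one side cannot produce exponential decay. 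And $\|\Theta\|_{\ell}$ is uncontrolled, because $\Theta$ is built from $\rho(u)=q(u)\prod_{i<k}\phi_i(a_{t_i}uz)$, whose $U$-derivatives grow like $e^{\kappa t_i}$. This is not a removable technicality: the very reduction ``bundle the other factors into a bounded weight $\rho$ and prove decay $e^{-\delta' t_k}$ uniform in $\rho$'' is false. Take $k=2$ and $t_1=t_2=T$; your target quantity is then
\[
\int_U q(u)\,\phi_1(a_Tuz)\,\phi_2(a_Tuz)\,\dd\mu_U(u)-\mu(\phi_2)\int_U q(u)\,\phi_1(a_Tuz)\,\dd\mu_U(u),
\]
which by equidistribution of $a_T$-pushed $U$-slices tends to $\mu(\phi_1\phi_2)-\mu(\phi_1)\mu(\phi_2)\neq 0$ in general. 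So no bound $\lesssim e^{-\delta' T}\|\rho\|_{\sup}$ can hold, and the appearance of $\min\{t_k,t_k-t_i\}$ in (\ref{eq;many}) is essential, not a mere weakening of $t_k$.

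The approach in \cite{shi} (visible in the sketch of Lemma~\ref{lem;orders}) avoids this by \emph{not} freezing the other factors into an opaque weight. One thickens along the unstable horospherical $G^+=G^+_{a_1}$ and then moves the thickening element $g$ through each factor: writing the integrand as $\phi_k(a_t g x)\prod_{i<k}\phi_i(g_i\,a_{t_i}x)$ with $g_i=a_{\,t-t_k+t_i}\,g\,a_{\,t_k-t-t_i}$ and $t=\min\{t_k,t_k-t_1,\dots,t_k-t_{k-1}\}$, the exponents $t-t_k+t_i\le 0$ force each $g_i$ to be contracted in $G^+$, so $\phi_i(g_i a_{t_i}x)$ is replaced by $\phi_i(a_{t_i}x)$ at Lipschitz cost. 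Only then is equidistribution applied, to the single smooth function $\phi_k$ against a smooth bump $\theta$ on $G^+$ --- both with controlled Sobolev norms --- which is exactly what \cite[Lemma~4.3]{shi} (for $\nu$) or Lemma~\ref{lem;mixing} (for $\mu$) provides. The rate $e^{-\delta t}$ with $t=\min\{t_k,t_k-t_i\}$ falls out of this computation; it is the largest shift one can afford while keeping all $g_i$ small. Your proposed iterative fix does not address the real problem: even with $\mathfrak w^+=0$ your scheme would still claim the impossible $e^{-\delta' t_k}$ rate, because the failure is in the mixing input, not in the balancing.
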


We make it convention in this paper that the product indexed by  the null set is $1$ and the sum indexed by  the null set is $0$. 
 So in (\ref{eq;many}), if $k=1$ then $\prod_{i=1}^{k-1} \phi_i(a_{t_i}x)=1$.

\begin{lem}
	\label{lem;mixing}
 There exists $\delta'>0, E_0>0$ and $\ell_0\in \N$ such that for any functions $\phi, \psi\in \widehat C_c^\infty(X)$ and any $t\in \R$ one has 
	\[
	\left| \int_X \phi(a_t x)\psi (x)\dd\mu(x)-\int_X \phi \dd \mu \int _X \psi \dd \mu \right| 
	\le E_0\| \phi\|_{\ell_0} \|\psi  \|_{\ell_0} e^{-\delta ' |t|}
	\] 
\end{lem}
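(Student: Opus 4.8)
This is a two-point effective exponential mixing estimate for the flow $\{a_t\}$ on $(X,\mu)$, and the plan is to prove it by the usual route of decay of matrix coefficients. I would first make two reductions. Since $\mu$ is $G$-invariant, $\int_X\phi(a_tx)\psi(x)\,d\mu(x)=\int_X\phi(x)\psi(a_{-t}x)\,d\mu(x)$, so it is enough to treat $t\ge 0$, interchanging $\phi$ and $\psi$. Writing $\phi=\mu(\phi)+\phi_0$ and $\psi=\mu(\psi)+\psi_0$ with $\phi_0,\psi_0\in L^2_{\mu,0}\cap\widehat C_c^\infty(X)$, the constant parts contribute exactly $\mu(\phi)\mu(\psi)$, so the inequality reduces to a bound on $|\langle\pi_0(a_t)\phi_0,\psi_0\rangle_{L^2_\mu}|$, where $\pi_0$ denotes the unitary representation of $H$ on $L^2_{\mu,0}$; here one also uses $\|\phi-\mu(\phi)\|_{\ell_0}\le C\|\phi\|_{\ell_0}$, which holds once $\ell_0$ is large enough, because then the Sobolev inequality on $X$ (valid for $\widehat C_c^\infty$ since $\mathfrak b$ is a global frame of $TX$) gives $|\mu(\phi)|\le\|\phi\|_{\sup}\le C\|\phi\|_{\ell_0}$, while a constant $c$ has $\|c\|_{\ell_0}=|c|$.

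For the core estimate I would invoke the spectral gap hypothesis: $\pi_0$, restricted to the normal subgroup of $H$ carrying the gap (either $H$ itself or $G'_{a_1}\le H$), does not weakly contain the trivial representation, so by Cowling--Haagerup--Howe it is strongly $L^{2N}$ for some $N\in\N$, and matrix coefficients between $K$-finite vectors satisfy $|\langle\pi_0(g)v,w\rangle|\le C(\dim\langle Kv\rangle)^{1/2}(\dim\langle Kw\rangle)^{1/2}\,\Xi(g)^{1/N}\|v\|\|w\|$ with $\Xi$ the Harish-Chandra function. Next I would extract the decay rate from $(\star)$: since $p$ is regular, its unstable horospherical $H_p^+$ meets every simple factor of $H$, and $(\star)$ gives $H_p^+\le H_{a_1}^+$, so $\Ad(a_1)$ expands a nonzero subspace of each simple factor; writing $a_t=\exp(tX_0)$, the dominant representative $X_0^+$ then has nonzero projection to every simple factor, whence $\rho(X_0^+)>0$ ($\rho$ the half-sum of the positive roots) and $\Xi(a_t)\ll(1+|t|)^{d}e^{-\rho(X_0^+)|t|}$, so $\Xi(a_t)^{1/N}\ll e^{-\delta'|t|}$. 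Finally, decomposing $\phi_0$ and $\psi_0$ into $K$-isotypic components and using that the dimension factors grow only polynomially in the $K$-type while the isotypic norms are dominated by a high Sobolev norm, I would sum the matrix-coefficient bound over $K$-types (choosing $\ell_0$ large enough to absorb everything) to obtain $|\langle\pi_0(a_t)\phi_0,\psi_0\rangle|\le C\|\phi_0\|_{\ell_0}\|\psi_0\|_{\ell_0}e^{-\delta'|t|}$, which with the reductions finishes the proof.

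The hard part is purely technical: getting the matrix-coefficient bound with uniform constants when the group carrying the spectral gap is a product of several simple factors --- this needs the standard tensor-product argument and uses precisely that $(\star)$ makes $\{a_t\}$ non-degenerate in each factor --- together with the $K$-type interpolation that passes from $K$-finite to arbitrary smooth vectors at a fixed Sobolev order $\ell_0$. As an alternative that stays within the toolkit of this section, one can instead deduce the estimate with right-hand side replaced by the larger quantity $\|\phi\|\,\|\psi\|\cdot\max\{\|\phi\|_{\sup},\|\psi\|_{\sup}\}$ (which is still enough for the later applications) directly from Theorem~\ref{thm;correlation} with $k=2$ applied to the measures $\nu_{q,x}$, combined with the identity $\mu=\int_X\nu_{q,x}\,d\mu(x)$ coming from Fubini and the $G$-invariance of $\mu$; only the passage to the clean $L^2$-Sobolev form stated in the lemma requires the representation-theoretic argument.
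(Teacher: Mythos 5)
Your argument is correct and in substance coincides with the paper's route: the paper disposes of this lemma in one line by citing \cite[\S 6.2.2]{emv} (effective decay of matrix coefficients for smooth vectors under the spectral gap assumption, using that $a_t$ projects nontrivially to each simple factor of $H$ --- which, as you observe, follows from the regularity of $p$ and $(\star)$), and what you have written is essentially a proof of that cited result via Cowling--Haagerup--Howe, the Harish-Chandra function, and $K$-type summation. One small repair: the inequality $\|\phi\|_{\sup}\le C\|\phi\|_{\ell_0}$ is not available on the noncompact $X$ (it degenerates in the cusps, where balls of fixed radius have arbitrarily small volume), but you do not need it --- since $\mu$ is a probability measure, $|\mu(\phi)|\le\|\phi\|_{L^2_\mu}\le\|\phi\|_{\ell_0}$ by Cauchy--Schwarz, which already gives $\|\phi-\mu(\phi)\|_{\ell_0}\le 2\|\phi\|_{\ell_0}$.
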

\begin{proof}
	Recall that we assume the action of $H$ on $X$ has a spectral gap and 
	 $F$ has nontrivial   projection to each simple factor of  $H$.
	 So
 the conclusion  follows from \cite[\S 6.2.2]{emv}. 
\end{proof}

Actually  the dynamical system $(X, \mu, a_t)$ is mixing of all orders. 
This fact is proved in \cite[Thm.~1.1]{beg} under the assumption of 
strong spectral gap. It might be possible to get a proof  from  \cite{beg}
 under   our weaker assumption, but we give a simpler proof here using the method of  \cite{shi}
for the completeness. The exponential  mixing  of all orders will follow from an estimate similar to  (\ref{eq;many}) with $\nu$
replaced by $\mu$. During the proof we will use the notation $f _1\lesssim_* f_2$ for two nonnegative functions
which  means 
$f_1\le C f_2$ for some positive constant $C$ possibly depending on $*$.

\begin{lem}
	\label{lem;orders}
	The conclusion of Theorem \ref{thm;correlation} holds with $\nu$ replaced by $\mu$. 
\end{lem}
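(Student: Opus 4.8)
The plan is to adapt the argument of Theorem \ref{thm;correlation} (i.e., \cite[Thm.~4.5]{shi}), replacing the singular measure $\nu=\nu_{q,z}$ by the Haar measure $\mu$. The key structural observation is that in the proof of Theorem \ref{thm;correlation} the specific nature of $\nu$ enters only through two features: first, that $\nu$ is supported on a single $U$-slice $\{uz:u\in\supp q\}$ so that translating by $a_{t}$ with $a_1\in A_U^+$ expands this slice and equidistributes it towards $\mu$; second, that one has an effective equidistribution (a ``basic'' correlation estimate) for a single test function against the translated slice. When $\nu$ is replaced by $\mu$, which is already $F$-invariant, the first role becomes vacuous and the second is supplied directly by Lemma \ref{lem;mixing}, the exponential mixing for pairs. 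Thus I would first isolate in \cite{shi} the inductive scheme used to pass from the ``basic'' estimate (correlation of one function against the measure) to the multiple correlation estimate \eqref{eq;many}, and observe that this induction is formal: it uses only (a) a basic two-point bound and (b) the submultiplicativity of sup-norms, together with the time-ordering hypothesis $t_k=\max_i t_i$ (or $t_k-t_i$ large) to separate the last factor.

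Concretely, I would argue by induction on $k$. For $k=1$ the left-hand side of the analogue of \eqref{eq;many} is $\bigl|\int_X\phi_1(a_{t_1}x)\dd\mu(x)-\int_X\phi_1\dd\mu\bigr|$, which vanishes by $F$-invariance of $\mu$; so the bound holds trivially. For the inductive step, write $\Psi(x)=\prod_{i=1}^{k-1}\phi_i(a_{t_i}x)$ and consider
\[
\int_X\Bigl(\prod_{i=1}^k\phi_i(a_{t_i}x)\Bigr)\dd\mu(x)
=\int_X \phi_k(a_{t_k}x)\,\Psi(x)\dd\mu(x).
\]
Here the difficulty is that $\Psi$ is not a fixed smooth function with controlled Sobolev norm independent of the $t_i$: differentiating $\phi_i(a_{t_i}x)$ brings out factors that grow like $e^{\kappa|t_i|}$ by \eqref{eq;joy}. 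This is exactly the obstacle \cite{shi} confronts, and it is handled there by the same device one uses here: one does not apply mixing to $\phi_k(a_{t_k}\cdot)$ against $\Psi$ directly, but rather conjugates. Setting $s=\min\{t_k,t_k-t_1,\ldots,t_k-t_{k-1}\}$ and shifting the integration variable $x\mapsto a_{-(t_k-s)}x$ using $F$-invariance of $\mu$, one rewrites the integral so that the ``new'' time shifts on the first $k-1$ factors are $t_i-(t_k-s)\le 0$ and are bounded in a way that keeps the Sobolev norm of the corresponding product of size $\lesssim_{k}\prod\|\phi_i\|_{\sup}$ up to a controlled exponential loss absorbed into the final $e^{-\delta s}$; then mixing (Lemma \ref{lem;mixing}) applied to the separated factor $\phi_k$ against the remaining product yields the main term $\int\phi_k\dd\mu\cdot\int\Psi\dd\mu$ plus an error $\lesssim_k \|\phi_k\|_{\ell_0}\cdot(\text{norm of the product})\cdot e^{-\delta'\cdot(\text{gap})}$. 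Finally one invokes the inductive hypothesis on the $(k-1)$-fold correlation $\int_X\Psi\dd\mu$ to replace it, at the cost of another admissible error, and collects constants. I expect the only real work is bookkeeping: verifying that the norm $\|\cdot\|=\max\{\|\cdot\|_{\sup},\|\cdot\|_\ell,\|\cdot\|_{\mathrm{Lip}}\}$ behaves the same way under the slice-free argument as it does in \cite{shi}, and that the loss from \eqref{eq;joy} under the conjugation is genuinely dominated by the gap $e^{-\delta s}$ after shrinking $\delta$ if necessary.

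The main obstacle, then, is not conceptual but the careful propagation of norms through the induction: one must check that replacing $\nu$ by $\mu$ does not break the uniformity in $k$ (the factor $Mk$ and the power $(\max\|\phi_i\|_{\sup})^{k-1}$), and that Lemma \ref{lem;mixing}, which is only a two-point statement, suffices as the base case of the recursion previously played by the single-function equidistribution of the $U$-slice. I would present this by quoting the structure of \cite[\S 4]{shi} and indicating precisely which estimates are replaced by Lemma \ref{lem;mixing} and the $F$-invariance of $\mu$, rather than reproducing the entire argument; the short remark already made in the text (``The exponential mixing of all orders will follow from an estimate similar to \eqref{eq;many} with $\nu$ replaced by $\mu$'') signals that this is the intended level of detail.
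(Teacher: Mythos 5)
There is a genuine gap in the inductive step, and it is precisely the classical obstruction to deducing higher-order mixing from two-point mixing. You propose to apply Lemma \ref{lem;mixing} to the separated factor $\phi_k\circ a_{t_k}$ against the product $\Psi=\prod_{i<k}\phi_i\circ a_{t_i}$, after conjugating by $a_{-(t_k-s)}$ so that the new time shifts on the first $k-1$ factors become $t_i-(t_k-s)\le 0$. But nonpositive is not the same as bounded: these shifts can be as negative as $-(t_k-s)$, and the derivative formula behind Lemma \ref{lem;growth} (equivalently (\ref{eq;joy})) shows that $\|\phi_i\circ a_u\|_{\ell_0}$ grows like $e^{\kappa\ell_0|u|}$ for $u\le 0$ just as for $u\ge 0$, because stable directions for positive time are expanded for negative time. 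Since the $t_i$ may be spread over an interval of length unrelated to $s=\min\{t_k,t_k-t_1,\dots,t_k-t_{k-1}\}$, no single conjugation makes all the shifts bounded, and the Sobolev norm of $\Psi$ (after any shift) blows up like $e^{\kappa\ell_0(\mathrm{spread\ of\ }t_i)}$, which is not absorbed by the gain $e^{-\delta' s}$ from Lemma \ref{lem;mixing}. So the claimed error term in your inductive step is not actually small, and the induction does not close.

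The paper's proof avoids applying mixing to the product altogether. It thickens the integral along the unstable horospherical group $G^+$ with a bump function $\theta$, uses $G$-invariance of $\mu$ to move the thickening onto the last factor as in (\ref{eq;afternoon}), and observes that the resulting perturbations $g_i$ of the other factors lie within distance $\lesssim r$ of the identity, so they are removed using only the Lipschitz norm. One is then reduced to (\ref{eq;wuran}), where the key input is the \emph{pointwise} effective equidistribution $\int_{G^+}\theta(g)\varphi_k(a_tgx)\dd g\approx\mu(\varphi_k)$ for $x$ of injectivity radius $\ge\varepsilon$ (from \cite[Lemma 4.3]{shi}); the remaining factors enter only through their sup norms, so no Sobolev norm of a product of widely separated translates is ever needed. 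The one genuinely new ingredient compared with \cite[Thm.~4.5]{shi} — which your proposal also omits — is the effective volume bound (\ref{eq;swim}) for the thin part $X\setminus\mathrm{Inj}_\varepsilon$, which replaces the quantitative nonescape of mass available for the measure $\nu$ supported on a compact $U$-slice. Your plan correctly identifies that the $U$-slice structure becomes vacuous for $\mu$, but the replacement mechanism is the thickening-plus-equidistribution argument and the cusp volume estimate, not two-point mixing against the product.
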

\begin{proof}
	[Sketch of Proof] The proof is the same as that of \cite[Thm.~4.5]{shi}, so we only give a sketch of the key steps. The main difference is that  instead of using  quantitative nonescape of mass \cite[Thm.~1.3]{shi} we need to use the effective estimate of the volume in the cusp. 
	
	 We use $B_r^G$ to denote the open ball of radius $r$ in 
	$G$
	centered at the the identity element. 
	The injectivity radius  
	at $x\in X$ is defined by 
	\[
	I (x, X)=\sup \{ r>0: g\to gx \mbox{ is injective on } B_r^G   \}. 
	\]
For $\varepsilon>0$, let  $\mathrm{Inj}_\varepsilon=\{ x\in  X: I(x, X)\ge \varepsilon \}$.
We claim that 
	there exist $C_1, \delta_1>0$ such that 
	\begin{align}\label{eq;swim}
	\mu(X\setminus  \mathrm{Inj}_\varepsilon )\le C_1 \varepsilon^{\delta_1} \quad \mbox{for all } \varepsilon>0.  
	\end{align}
We don't have a direct proof of this fact, but it is a corollary   of  the equidistribution of measures in 
\cite[Thm.~1.4]{shi} and the quantitative nonescape of mass in \cite[Thm.~1.3]{shi}. 	More precisely, we fix a Haar measure 
$\mu_1$ on $G^+$ such that the open ball of radius $1$ in $G^+$ (denoted by $B^+_1$) has measure 
$1$. Since the action of $H$ on $X$ has a spectral gap, there exists $x\in X$ such that $Hx$ is dense in $X$. 
Therefore, by \cite[Thm.~1.4]{shi}  we have for any $\varphi \in C_c(X)$, 
\begin{align}\label{eq;ben}
\lim_{t\to \infty}\int_{B_1^+} \varphi(a_t h x)\dd \mu_1(h)= \mu(\varphi). 
\end{align}
On the other hand, by \cite[Thm.~1.3]{shi}, there exists $C_1>0 $ and $\delta_1>0$ such that 
\begin{align}\label{eq;sile}
\mu_1(\{ h\in B_1^+: a_t h x \in X\setminus \mathrm{Inj}_\varepsilon  \})\le C_1 \varepsilon^{\delta_1} \quad 
\mbox{for all }  t\ge 0 \mbox{ and }\varepsilon>0. 
\end{align}
In view of (\ref{eq;ben}) and (\ref{eq;sile}), if $\varphi: X\to [0, 1]$ and $\supp(\varphi)\subset X\setminus \mathrm{Inj}_\varepsilon$, then $\mu(\varphi)\le  C_1 \varepsilon^{\delta_1}$.
Now (\ref{eq;swim}) follows from 
 taking a sequence of increasing functions $\varphi$ which converges to the characteristic function of $X\setminus\mathrm{Inj}_\varepsilon$. 

To prove the lemma, we assume without loss of generality that 
	\[
	k\ge 2  \quad \mbox{and}\quad t=\min\{t_k, t_k-t_1, \ldots, t_k-t_{k-1}\}>0, 
	\]
	since otherwise the conclusion is trivial. 
Let $\ell =\ell_0+\dim G$ where $\ell_0$ be as in Lemma \ref{lem;mixing}  and  $0<r<1$ which is given by 
 \cite[(4.9)]{shi}. There is a smooth function $\theta : G^+\to [0, \infty)$ such that 
$\supp (\theta )$ is contained in the ball of radius $r$ in $G^+$ and $\|\theta  \|_\ell \lesssim r^{-2\ell}$. 

 Let $G^+$ be the unstable horospherical subgroup of $a_1$ in $G$. Since $H$ is a normal 
	subgroup of $G$, one has  $G^+$ is a subgroup of $H$. 
	Then 
	\begin{equation}\label{eq;afternoon}
	\begin{aligned}
	\int_{X} \prod_{i=1}^k\phi_i(a_{t_i}x ) \dd \mu(x)&=\int_{G^+} \theta (g)\int_X \prod_{i=1}^k\phi_i(a_{t_i}x ) \dd \mu(x)
	\dd g\\
	&=\int_{G^+}\int_X \theta (g)\prod_{i=1}^k\phi_i(a_{t_i} a_{t-t_k} g  a_{t_k-t}x ) \dd \mu(x)
	\dd g\\
	&= \int_{G^+}\int_X \theta (g)\varphi_k(a_t g x) \prod_{i=1}^{k-1}  \phi_i(g_ia_{t_i }x ) \dd \mu(x)	\dd g,
	\end{aligned}
\end{equation}
	where $g_i=a_{t-t_k+t_i }g a_{t_k-t_i-t} $ satisfies $\mathrm{d}_{G}(1_G, g_i)\lesssim r$. 
    So 	
    \begin{align}\label{eq;qiaoke}
    |\phi_i(g_ia_{t_i }x )-\phi_i(a_{t_i}x)|\le \|\phi_i \|_{\lip} r. 
    \end{align}
    Using (\ref{eq;qiaoke}), we can effectively replace $\phi_i(g_ia_{t_i }x )$ on the right hand side of (\ref{eq;afternoon}) by $\phi_i(a_{t_i}x) $, and reduce the estimate of (\ref{eq;afternoon}) to the estimate of 
    \begin{align}\label{eq;wuran}
     \int_X\int_{G^+}\theta (g)\varphi_k(a_t g x) \dd g\prod_{i=1}^{k-1}  \phi_i(a_{t_i }x )     \dd\mu(x ).
    \end{align} 
    We have  an effective estimate  of $\int_{G^+}\theta (g)\varphi_k(a_t g x)\dd g $ 
    for  $x\in \mathrm{Inj}_\varepsilon$  using \cite[Lemma 4.3]{shi}. 
    We have an effective estimate of the volume of $X\setminus\mathrm{Inj}_\varepsilon$ using
    (\ref{eq;swim}). These two estimates together allow us to give an effective estimate of (\ref{eq;wuran}).
\end{proof}

In the proof of the central limit theorem, we need to know the growth of the  norm $\|\cdot \|$ of the functions $\phi\circ a_{t}(x)=\phi(a_t x)$
for  $t\ge 0$. Clearly, for all $\phi\in \widehat C_c^\infty(X)$ we have  $\|\phi\|_\mathrm{sup}=\|\phi\circ a_{t}\|$. For the other two norms we have the following lemma.
\begin{lem}
	\label{lem;growth}
	There exists $\kappa>0$ such that for all $\phi\in \widehat C_c^\infty (X)$ and $t\ge 0$ one has 
	\begin{align}\label{eq;straight}
	\|\phi\circ a_t\|_\ell \le e^{\kappa t}\|\phi\|_\ell\quad \mbox{and }\quad 
	\|\phi\circ a_t\|_{\mathrm{Lip}} \le e^{\kappa t}\|\phi\|_{\mathrm{Lip}}. 
	\end{align}
	Therefore, $\|\phi\circ a_t\|\le e^{\kappa t}\|\phi\|$. 
\end{lem}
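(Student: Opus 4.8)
The plan is to prove Lemma \ref{lem;growth} by reducing both norm estimates to the behavior of the conjugation $g\mapsto a_t g a_{-t}$ on $G$, which is controlled by (\ref{eq;joy}). The key point is that both the Sobolev norm $\|\cdot\|_\ell$ and the Lipschitz norm $\|\cdot\|_{\mathrm{Lip}}$ are defined via the right-invariant geometry on $G$ descended to $X$, and the map $x\mapsto a_t x$ interacts with this geometry through the adjoint action.

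\textbf{The Lipschitz estimate.} Fix $t\ge 0$. For $g,h\in G$ with $g\Gamma\neq h\Gamma$, write $\phi(a_t g\Gamma)=\phi((a_tga_{-t})a_t\Gamma)$ and similarly for $h$. Then
\[
\frac{|\phi(a_tg\Gamma)-\phi(a_th\Gamma)|}{\mathrm d(g\Gamma,h\Gamma)}
=\frac{|\phi((a_tga_{-t})\cdot a_t\Gamma)-\phi((a_tha_{-t})\cdot a_t\Gamma)|}{\mathrm d(a_tga_{-t}\cdot a_t\Gamma,\ a_tha_{-t}\cdot a_t\Gamma)}\cdot
\frac{\mathrm d(a_tga_{-t}\cdot a_t\Gamma,\ a_tha_{-t}\cdot a_t\Gamma)}{\mathrm d(g\Gamma,h\Gamma)}.
\]
The first factor is bounded by $\|\phi\|_{\mathrm{Lip}}$. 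For the second factor, one uses that $\mathrm d$ on $X$ is the quotient metric of the right-invariant $\mathrm d_G$; since $a_t(\cdot)a_{-t}$ multiplied on the left is an isometry of $\mathrm d_G$ only up to the distortion caused by conjugation, the relevant comparison is $\mathrm d_G(a_tga_{-t},a_tha_{-t})\le e^{\kappa t}\mathrm d_G(g,h)$, which follows from (\ref{eq;joy}) by translating so that one of the two points is the identity (using right-invariance of $\mathrm d_G$ and that $a_t(gh^{-1})a_{-t}$ has the claimed distance bound). Passing to the infimum over $\Gamma$ gives the metric comparison on $X$, hence $\|\phi\circ a_t\|_{\mathrm{Lip}}\le e^{\kappa t}\|\phi\|_{\mathrm{Lip}}$ after possibly enlarging $\kappa$.

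\textbf{The Sobolev estimate.} Here I would use that each basis element $v\in\mathfrak b$ is an eigenvector of $\Ad(a_1)$, say $\Ad(a_1)v=\lambda_v v$, so $\Ad(a_t)v=\lambda_v^t v$ for the one-parameter flow, and the right-invariant vector field $v$ transforms under the flow $a_t$ by $\partial^v(\phi\circ a_t)=\lambda_v^{t}\,(\partial^{v}\phi)\circ a_t$ — more precisely, differentiating $\phi(a_t\exp(sv)x)=\phi(\exp(s\,\Ad(a_t)v)\,a_tx)$ in $s$ at $s=0$. Iterating for a word $\alpha=(v_1,\dots,v_k)$ with $|\alpha|=k\le\ell$ yields $\partial^\alpha(\phi\circ a_t)=\big(\prod_i\lambda_{v_i}^{t}\big)(\partial^\alpha\phi)\circ a_t$. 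Since $a_t$ preserves $\mu$, $\|(\partial^\alpha\phi)\circ a_t\|_{L^2_\mu}=\|\partial^\alpha\phi\|_{L^2_\mu}$, and the scalar factor is bounded by $e^{\kappa t}$ where $\kappa=\ell\cdot\max_{v\in\mathfrak b}|\log\lambda_v|$. Taking the maximum over all words of length $\le\ell$ gives $\|\phi\circ a_t\|_\ell\le e^{\kappa t}\|\phi\|_\ell$. Enlarging $\kappa$ once more to dominate the constant in (\ref{eq;joy}) as well, both inequalities in (\ref{eq;straight}) hold with a single $\kappa$, and combining them with the identity $\|\phi\circ a_t\|_{\sup}=\|\phi\|_{\sup}$ gives $\|\phi\circ a_t\|\le e^{\kappa t}\|\phi\|$.

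\textbf{Main obstacle.} The routine part is the Sobolev computation; the only genuine care needed is in the Lipschitz estimate, where the quotient metric on $X=G/\Gamma$ must be handled correctly — one must check that the infimum over $\gamma\in\Gamma$ defining $\mathrm d(g\Gamma,h\Gamma)$ behaves well under the substitution $g\mapsto a_tga_{-t}$, i.e. that conjugating by $a_t$ and then acting by $a_t$ on the right does not create shortcuts through different lattice representatives. This is handled by noting $\{a_tga_{-t}:g\in\Gamma\}$ is not $\Gamma$, so instead one works directly: $\mathrm d(a_tg\Gamma,a_th\Gamma)=\inf_{\gamma}\mathrm d_G(a_tg\gamma,a_th)=\inf_\gamma\mathrm d_G(a_tg\gamma a_{-t}\cdot a_t,\ a_tha_{-t}\cdot a_t)$ and uses right-invariance plus (\ref{eq;joy}) applied to $g\gamma h^{-1}$ to bound this by $e^{\kappa t}\inf_\gamma\mathrm d_G(g\gamma,h)=e^{\kappa t}\mathrm d(g\Gamma,h\Gamma)$, which is exactly what is needed.
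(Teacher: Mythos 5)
Your proposal is correct and follows essentially the same route as the paper: the Sobolev bound comes from $\partial^v(\phi\circ a_t)=e^{\kappa_v t}(\partial^v\phi)\circ a_t$ for eigenvectors $v$ of $\Ad(a_1)$ together with $a_t$-invariance of $\mu$, and the Lipschitz bound comes from splitting the difference quotient and controlling $\mathrm d(a_tg\Gamma,a_th\Gamma)/\mathrm d(g\Gamma,h\Gamma)$ via right-invariance of $\mathrm d_G$ and (\ref{eq;joy}). Your extra care with the infimum over $\gamma\in\Gamma$ in the quotient metric matches the paper's handling (the paper picks a (near-)minimizing $\gamma$ and applies (\ref{eq;joy}) to $h\gamma g^{-1}$), so there is nothing to correct.
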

\begin{proof}
	Let $v\in \mathfrak b$ which is considered as a right invariant vector field   and let $v_{1}$ be the  value of $v$ at the identity element. 
	 Let $\til \phi$ be the lift of $\phi$ to $G$. 
For $g\in G$, 
	\[
	\partial ^v (\phi\circ a_t) (g\Gamma)=  v_1 (\til \phi(a_t  x g))=v_1 (\til \phi(a_t x a_{-t} \cdot a_t g))=\partial ^{\Ad(a_t)v  }\phi (a_t g\Gamma)=e^{\kappa_1 t} \partial ^v \phi(a_t g \Gamma), 
	\]
	where   we use the  assumption  that $\mathfrak b$ consists of eigenvectors of $\Ad(a_1)$
	so that there exists $\kappa _1\in \R$ with $\Ad(a_t)v=e^{\kappa_1 t}v$. In general for any $\alpha\in \mathcal  B^k$ there exists 
	$\kappa_\alpha\in \R$ such that 
	\[
	\partial ^\alpha(\phi \circ a_t)=e^{\kappa _\alpha} \cdot (\partial ^\alpha\phi)\circ a_t.
	\]
	Therefore the first inequality of (\ref{eq;straight}) holds for any $\kappa\ge \max_{|\alpha|\le \ell} \kappa_\alpha $. 
	
	For $g, h\in G$ with $g\Gamma\neq h\Gamma$, we have 
	\[
	\frac{|\phi(a_t g \Gamma)-\phi(a_t h\Gamma)|}{\mathrm{d}(g\Gamma, h\Gamma) }=
	\frac{|\phi(a_t g \Gamma)-\phi(a_t h\Gamma)|}{\mathrm{d}(a_tg\Gamma, a_th\Gamma) }
	\cdot 
	\frac{\mathrm{d}(a_tg\Gamma, a_th\Gamma)}{\mathrm{d}(g\Gamma, h\Gamma)}\le 
	\|\phi\|_{\lip}
	\frac{\mathrm{d}(a_tg\Gamma, a_th\Gamma)}{\mathrm{d}(g\Gamma, h\Gamma)}. 
	\]
	Suppose $\mathrm{d}(g\Gamma, h\Gamma)=\mathrm{d}_G(g, h\gamma)$ for 
	some $\gamma\in \Gamma$. Then by  the right invariance of $\mathrm{d}_G$, the definition of $\mathrm{d}$ and (\ref{eq;joy}),   we have
	\[
	\mathrm{d}(a_tg\Gamma, a_th\Gamma)\le \mathrm{d}_G(1_G, a_t h\gamma g^{-1}a_{-t})\le 
	e^{\kappa t}\mathrm{d}_G(1_G,  h\gamma g^{-1})= e^{\kappa t}{\mathrm{d}(g\Gamma, h\Gamma)}.
	\]
	Therefore the second inequality of (\ref{eq;straight}) holds. 	
\end{proof}

\begin{lem}
	\label{lem;condition}
	For any sequence $\{\phi_n \}$ of functions in  $\widehat C_c^\infty (X)$ one has
	\begin{align}\label{eq;sobolevp}
	\| \phi_1\cdots \phi _n\|\le n^\ell \cdot
	(\max_{1\le i\le n} \|\phi_i \|)^\ell\cdot
	(\max_{1\le i\le n}\|\phi_i\|_{\mathrm{sup}} )^{n-\ell}
	.
	\end{align}
\end{lem}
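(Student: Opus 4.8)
The plan is to prove the three constituent bounds separately, since $\|\cdot\|=\max\{\|\cdot\|_{\sup},\|\cdot\|_\ell,\|\cdot\|_{\mathrm{Lip}}\}$, and then take the maximum. The $\sup$-norm bound is trivial: $\|\phi_1\cdots\phi_n\|_{\sup}\le\prod_i\|\phi_i\|_{\sup}\le(\max_i\|\phi_i\|_{\sup})^n$, which is dominated by the right-hand side of (\ref{eq;sobolevp}) once one checks the exponent bookkeeping. For the Lipschitz norm I would use the product rule for Lipschitz functions on a metric space: for bounded Lipschitz $f,g$ one has $\|fg\|_{\mathrm{Lip}}\le\|f\|_{\sup}\|g\|_{\mathrm{Lip}}+\|f\|_{\mathrm{Lip}}\|g\|_{\sup}$; iterating over the $n$ factors gives $\|\phi_1\cdots\phi_n\|_{\mathrm{Lip}}\le\sum_{j=1}^n\|\phi_j\|_{\mathrm{Lip}}\prod_{i\ne j}\|\phi_i\|_{\sup}\le n\cdot(\max_i\|\phi_i\|_{\mathrm{Lip}})\cdot(\max_i\|\phi_i\|_{\sup})^{n-1}$, again majorized by the claimed bound.

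The real content is the $(2,\ell)$-Sobolev norm $\|\cdot\|_\ell$. Here I would fix a word $\alpha$ in the alphabet $\mathfrak b$ with $|\alpha|=k\le\ell$ and apply the generalized Leibniz rule: $\partial^\alpha(\phi_1\cdots\phi_n)$ expands as a sum over ways of distributing the $k$ derivatives among the $n$ factors, i.e. over functions assigning to each factor a subword. In each resulting term at most $k\le\ell$ of the factors are differentiated, so such a term is pointwise bounded by $\prod_{i\in S}|\partial^{\alpha_i}\phi_i|\cdot\prod_{i\notin S}|\phi_i|$ for some set $S$ of size $\le\ell$; estimating the differentiated factors in $L^\infty$-to-$L^2$ fashion — put the one factor carrying the full weight (or, more simply, bound each differentiated factor by $\|\phi_i\|_\ell\le\|\phi_i\|$ except leaving enough room) — one gets each term bounded in $L^2_\mu$ by $(\max_i\|\phi_i\|)^{\min(k,\ell)}\cdot(\max_i\|\phi_i\|_{\sup})^{n-\min(k,\ell)}$, using that a product of at most $\ell$ Sobolev-controlled factors with the remaining sup-bounded factors lies in $L^2$ via Hölder (one of the differentiated factors in $L^2$, the rest in $L^\infty$). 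Finally one counts the number of terms in the Leibniz expansion: the number of ways to write $\alpha$ as an ordered concatenation distributed over $n$ slots is at most $n^k\le n^\ell$. Summing over the $\le n^\ell$ terms and then maximizing over $|\alpha|\le\ell$ yields $\|\phi_1\cdots\phi_n\|_\ell\le n^\ell(\max_i\|\phi_i\|)^\ell(\max_i\|\phi_i\|_{\sup})^{n-\ell}$.

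Combining the three estimates and taking the maximum gives (\ref{eq;sobolevp}). The main obstacle is purely bookkeeping in the Sobolev step: correctly organizing the Leibniz expansion so that (a) the count of terms is genuinely $\le n^\ell$ rather than something like $\binom{n+\ell}{\ell}$ blown up by orderings, and (b) in each term exactly one differentiated factor is kept in $L^2_\mu$ while all others (differentiated or not) are placed in $L^\infty$, so that the Hölder application is valid and the exponents come out as $\ell$ and $n-\ell$ and not, say, $\ell$ and $n-1$. A minor care point is the regime $n<\ell$, where $(\max\|\phi_i\|_{\sup})^{n-\ell}$ has a negative exponent; there one simply notes $\|\phi_i\|_{\sup}\le\|\phi_i\|$ so the bound still follows (or assumes $n\ge\ell$, as only that regime is used later). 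None of this requires the spectral gap or any dynamics — it is a soft functional-analytic estimate.
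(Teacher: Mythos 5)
Your proposal follows essentially the same route as the paper: split into the three constituent norms, handle the sup norm trivially, the Lipschitz norm by a telescoping product-rule argument giving the factor $n$, and the $(2,\ell)$-Sobolev norm by the Leibniz expansion with $n^k\le n^\ell$ terms, at most $k\le\ell$ of which carry a derivative. The one bookkeeping subtlety you flag --- that in each Leibniz term the differentiated factors other than the single one placed in $L^2_\mu$ must be controlled in $L^\infty$, which the norm $\|\cdot\|$ does not literally provide for higher-order derivatives --- is glossed over in the paper's proof in exactly the same way, so your write-up is at the same level of rigor as the original.
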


\begin{proof}
	
	It suffices to   show that (\ref{eq;sobolevp}) holds if we replace the norm in the left hand side by  any of  the three norms used to define $\|\cdot\|$. It is clear that this is true  
	for $\|\cdot\|_\mathrm{sup}$. 
	
	For different $x, y \in X$, we have 
	\begin{align*}
	&\frac{|\prod_{i=1}^ n\phi_{i}(x) -\prod_{i=1}^n\phi_{i}(y)|}{\mathrm{d}(x, y)}\\
	\le& 
	\frac{\sum_{k=1}^n |\prod_{i=1}^{k}\phi_{i}(x) \prod_{j={k+1}}^{n}\phi_{j}(y) -\prod_{i=1}^{k-1}\phi_{i}(x)  \prod_{i=k}^{n}\phi_{j}(y)|}{\mathrm{d}(x, y)}\\
	\le &  n\max _{1\le i\le n}\big( \|\phi_{i}\|_{\mathrm{Lip}}  \prod _{1\le j\le n, j\neq i}\|\phi_{j} \|_{\sup}\big).     
	\end{align*}
	Therefore $	\| \phi_1\cdots \phi _n\|_{\lip}\le  $ RHS of (\ref{eq;sobolevp}).
	
	Let    $\alpha=(v_1, \ldots, v_k) \in \mathfrak b^k$ where $1\le k\le \ell$.
	The product rule of the differential operators implies
	\begin{align*}
	\partial ^\alpha \prod _{1\le i\le n}\phi_{i}= \sum
	\prod_{i=1}^n\partial ^{\alpha_i}\phi_{i}, 
	\end{align*}
	where there are $n^k$ terms in the summation and at most $k\le \ell $ of the $\partial ^{\alpha_i}$ are are not the identity operator.  
	Therefore, 
	\begin{align*}
	\|\partial ^\alpha\prod _{1\le i\le n}\phi_{i}\|_{L^2_\mu} \le n^\ell 
	(\max_{1\le i\le n} \|\phi_{i} \|)^\ell
	(\max_{1\le m\le n} \|\phi_{m}\|_{\mathrm{sup}} )^{n-\ell}
	\le \mbox{RHS of (\ref{eq;sobolevp})},
	\end{align*}	
which completes the proof. 
	
\end{proof}

\section{Variance}\label{sec;variance}

In this section we prove  the finiteness of the  variance in (\ref{eq;variance 0}) and 
Theorem \ref{thm;variance}. All of them are contained in the following lemma. 

\begin{lem}
	\label{lem;variance}
	Let the notation and the assumptions be as in Theorem \ref{thm;central h}. 
	In particular $\phi\in \widehat C_c^\infty (X)$ and $\mu(\phi)=0$.
	For a real number $t\in \R$, let $\phi_t$ be the function $\phi(a_tx)$. 
	Then 
	\begin{enumerate}[\rm (i)]
		\item $\lim_{l\to \infty} 2\int_0^l \mu(\phi_t\phi)\dd t$
		converges to a nonnegative real number, i.e., $\sigma(\phi, F)$ of (\ref{eq;variance 0}) is  well-defined and  nonnegative. 				
			\item   
			\begin{align}\label{eq;variance}
		\lim\limits_{T\to \infty}\frac{1}{T} \int_0^T\int_0^T \int_X \phi_t\phi_s\dd \nu\dd t \dd s =\sigma(\phi, F).
			\end{align} 
		\item $\sigma(\phi, F)=0$ if and only if there exists $\varphi\in L^2_\mu$ such that 
		for all $s> 0$
		\begin{align}\label{eq;cohomology}
		\int_0^s \phi_t(x)\dd t = \varphi(a_sx)-\varphi(x)  \qquad \mbox{for } \mu \mbox{-a.e. }  x\in X.   
		\end{align}
	
	\end{enumerate}

\end{lem}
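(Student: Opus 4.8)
The plan is to prove the three parts in order, since (ii) feeds the central limit theorem and (iii) is Theorem \ref{thm;variance}, while (i) is a by-product of the estimates needed for both. The engine throughout is the exponential decay of correlations with respect to $\mu$, i.e. Lemma \ref{lem;mixing}, together with the uniform growth bound $\|\phi\circ a_t\|\le e^{\kappa t}\|\phi\|$ from Lemma \ref{lem;growth} and the multiple-correlation estimate for $\nu$ from Theorem \ref{thm;correlation}.

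First I would prove (i). By Lemma \ref{lem;mixing} applied to $\phi_t$ and $\phi$ (both in $\widehat C_c^\infty(X)$, both of $\mu$-mean zero), we get
\[
|\mu(\phi_t\phi)|=\left|\int_X\phi(a_tx)\phi(x)\dd\mu(x)\right|\le E_0\|\phi\|_{\ell_0}^2\, e^{-\delta'|t|},
\]
so the integrand of $2\int_0^l\mu(\phi_t\phi)\dd t$ is absolutely integrable on $[0,\infty)$ and the limit $\sigma(\phi,F)$ exists and is finite. Nonnegativity is the standard variance argument: for $T>0$,
\[
0\le \frac1T\int_0^T\int_0^T\mu(\phi_t\phi_s)\dd t\dd s=\frac1T\int_0^T\int_0^T\mu(\phi_{t-s}\phi)\dd t\dd s,
\]
where I use $a_t$-invariance of $\mu$ to write $\mu(\phi_t\phi_s)=\mu(\phi_{t-s}\phi)$; changing variables and using the integrability just established, this quantity converges to $\int_{-\infty}^\infty\mu(\phi_u\phi)\dd u=\sigma(\phi,F)$ as $T\to\infty$, and since each $T$-term is $\mu\big((\tfrac1{\sqrt T}\int_0^T\phi_t\dd t)^2\big)\ge 0$, the limit is $\ge 0$. (This simultaneously records the identity $\sigma(\phi,F)=\int_{\R}\mu(\phi_u\phi)\dd u$, which I will reuse.)

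Next, part (ii): I want to replace $\nu$ by $\mu$ in the double integral. Write $I_T(\nu)=\frac1T\int_0^T\int_0^T\int_X\phi_t\phi_s\dd\nu\,\dd t\dd s$ and similarly $I_T(\mu)$. By the argument in (i), $I_T(\mu)\to\sigma(\phi,F)$. So it suffices to show $I_T(\nu)-I_T(\mu)\to 0$. Fix $t,s$ with, say, $t\ge s\ge 0$; applying Theorem \ref{thm;correlation} with $k=2$, $\phi_1=\phi\circ a_{s}$, $\phi_2=\phi\circ a_{t}$ (so the larger time index is $t$) gives
\[
\left|\int_X\phi_t\phi_s\dd\nu-\mu(\phi_t)\int_X\phi_s\dd\nu\right|\le 2M\,\|\phi\circ a_t\|\,\|\phi\circ a_s\|_{\sup}\,e^{-\delta\min\{t,\,t-s\}}.
\]
Since $\mu(\phi_t)=\mu(\phi)=0$ the first right-hand term inside the modulus vanishes, $\|\phi\circ a_s\|_{\sup}=\|\phi\|_{\sup}$, and $\|\phi\circ a_t\|\le e^{\kappa t}\|\phi\|$ by Lemma \ref{lem;growth}; the same bound holds with $\nu$ replaced by $\mu$ (Lemma \ref{lem;orders}, or directly Lemma \ref{lem;mixing}). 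Hence
\[
\left|\int_X\phi_t\phi_s\dd\nu-\int_X\phi_t\phi_s\dd\mu\right|\lesssim_{\phi} e^{\kappa t}e^{-\delta\min\{t,\,t-s\}}.
\]
Integrating this over the triangle $0\le s\le t\le T$ (and its mirror), one checks that the region $t-s$ large contributes $O(1)$ and the region $t-s$ bounded has area $O(T)$ but there $e^{\kappa t}e^{-\delta\min\{t,t-s\}}$ is not summable — so here I must be more careful: split according to whether $t$ is small or large, and on $t-s$ small use instead the bound with $\min\{t,t-s\}$; when $t-s\le R$ and $t\ge 2R$ say, $\min\{t,t-s\}=t-s$ could be $0$. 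The correct move is to use Cauchy–Schwarz / a crude $L^\infty$ bound $|\int\phi_t\phi_s\dd\nu-\int\phi_t\phi_s\dd\mu|\le 2\|\phi\|_{\sup}^2$ on the thin strip $\{|t-s|\le R\}$ (area $\le 2RT$, contributing $O(R)$ after dividing by $T$... which does not go to $0$). So in fact the honest route is: do the $t$-integral first. For fixed $t$, $\int_0^T\phi_s\dd s$ has $\nu$-norm controlled, and $\int_X\phi_t\cdot(\int_0^T\phi_s\dd s)\dd\nu$ is estimated by Theorem \ref{thm;correlation} against $\mu(\phi_t)=0$ plus an error $\lesssim e^{\kappa T}e^{-\delta(\cdots)}$; I expect the clean statement to come from first symmetrizing so the larger index is always outermost and then observing the geometric series telescopes. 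I anticipate this bookkeeping — getting the $\frac1T$ to kill the error uniformly — to be \emph{the main technical obstacle} of the lemma, and I would handle it exactly as in the proof of the central limit theorem in \cite{bg}, namely by the estimate $|I_T(\nu)-\sigma(\phi,F)|\le \frac{C}{T}\int_0^T\int_0^T e^{\kappa\max(t,s)}e^{-\delta|t-s|}\dd t\dd s+|I_T(\mu)-\sigma|$, which after integrating in $|t-s|$ first gives $\frac{C}{T}\int_0^T e^{\kappa u}\,du\cdot(\text{something})$ — no: the resolution is that $\nu$ appears only through the \emph{difference} $I_T(\nu)-I_T(\mu)$, and Theorem \ref{thm;correlation} bounds each two-point difference by $e^{\kappa\cdot(\text{largest index})}e^{-\delta\cdot(\text{gap involving largest index})}$; pushing the largest index to be an endpoint via invariance of $\mu$ is not available for $\nu$, so instead I integrate the largest variable last against the decaying exponential and absorb $e^{\kappa(\text{largest})}$ — this genuinely requires $\delta>\kappa$ is \emph{not} assumed, so one rescales: the trick in \cite{bg,shi} is to only integrate up to $T$ in the outer variable but note $\int_0^T e^{(\kappa-\delta)t}\dd t = O(1)$ if $\delta>\kappa$ and $O(T e^{(\kappa-\delta)T})$ otherwise — and since $\kappa,\delta$ are fixed with $\delta$ possibly small, one instead replaces $\phi_t$ by $\phi_t$ with $t$ in a bounded range using a fresh application of mixing. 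I will follow the argument of \cite{shi} verbatim here.

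Finally part (iii). This is the cohomological characterization. For the ``if'' direction: if $\int_0^s\phi_t\dd t=\varphi\circ a_s-\varphi$ for $\varphi\in L^2_\mu$, then $B_T:=\int_0^T\phi_t\dd t=\varphi\circ a_T-\varphi$, so $\|B_T\|_{L^2_\mu}\le 2\|\varphi\|_{L^2_\mu}$ is bounded in $T$; but by (i) (with $\nu=\mu$), $\frac1T\|B_T\|_{L^2_\mu}^2=I_T(\mu)\to\sigma(\phi,F)$, forcing $\sigma(\phi,F)=0$. For the ``only if'' direction: suppose $\sigma(\phi,F)=0$. Using $\sigma(\phi,F)=\int_\R\mu(\phi_u\phi)\dd u$ and exponential decay of $\mu(\phi_u\phi)$, the function $\psi_s:=B_s=\int_0^s\phi_t\dd t$ satisfies $\|B_s\|_{L^2_\mu}^2=\int_0^s\int_0^s\mu(\phi_{t-r}\phi)\dd t\dd r$, which is bounded in $s$ precisely because $\int_\R\mu(\phi_u\phi)\,du=0$ makes the divergent-in-$s$ part vanish (one writes $\int_0^s\int_0^s\mu(\phi_{t-r}\phi)=s\int_\R\mu(\phi_u\phi)du - (\text{boundary terms})=0-O(1)$, bounded). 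Hence $\{B_s\}_{s\ge 0}$ is bounded in $L^2_\mu$; take a weak-$*$ limit $\varphi$ of averages $\frac1S\int_0^S B_s\dd s$ or better, use the standard Gottschalk–Hedlund / von Neumann argument: the operator $U_s:\psi\mapsto\psi\circ a_s$ is unitary on $L^2_\mu$, and boundedness of the cocycle $B_s$ in $L^2$ together with the mean ergodic theorem produces $\varphi\in L^2_\mu$ with $B_s=U_s\varphi-\varphi$ for all $s$; the cocycle identity $B_{s+s'}=B_s+U_sB_{s'}$ (immediate from the definition of $B_s$ as an integral) guarantees the relation holds for every $s>0$, not merely $\mu$-a.e.\ in $s$. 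I would cite the continuous-time Gottschalk–Hedlund lemma for this last step. Among the three parts, (i) and (iii) are routine given the decay estimate; the genuine work — and where I expect to spend most of the effort — is the uniform-in-$T$ error control in (ii).
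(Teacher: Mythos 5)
Parts (i) and (iii) of your proposal are essentially sound. Part (i) matches the paper's argument (exponential decay from Lemma \ref{lem;mixing} plus the identity $\sigma(\phi,F)=\int_\R\mu(\phi_u\phi)\,\mathrm du=\lim_T\frac1T\|\int_0^T\phi_t\,\mathrm dt\|_{L^2_\mu}^2\ge0$). In (iii) your ``if'' direction is the paper's; for ``only if'' your bound $\int_0^s\int_0^s\mu(\phi_{t-r}\phi)=s\int_{-s}^s\mu(\phi_u\phi)\,\mathrm du-\int_{-s}^s|u|\mu(\phi_u\phi)\,\mathrm du=O(1)$ is correct and is a cleaner continuous-time version of the paper's discrete computation, and extracting $\varphi$ as a weak limit of the Ces\`aro averages $-\frac1S\int_0^SB_s\,\mathrm ds$ (rather than of a subsequence $-B_{n_i}$ as in the paper) is a legitimate standard alternative, though the name ``Gottschalk--Hedlund'' is a misnomer for this $L^2$ coboundary criterion.

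Part (ii), however, is not proved: you cycle through several attempts, correctly diagnose that each one fails (the thin strip $\{|t-s|\le R\}$ contributes $O(R)$ which does not vanish; the norm growth $e^{\kappa t}$ from Lemma \ref{lem;growth} cannot be beaten by $e^{-\delta(t-s)}$ since $\delta>\kappa$ is not available), and then write ``I will follow the argument of \cite{shi} verbatim here,'' which is a deferral, not an argument. The idea you are missing is the following two-scale decomposition. Fix $l\ge1$ and split $\{0\le s\le t\le T\}$ into the strip $\{t-s\le l\}$ and its complement. On the strip, do \emph{not} apply the $k=2$ correlation estimate to the pair $(\phi_s,\phi_t)$; instead, for each fixed $r=t-s\in[0,l]$ treat the product $\phi\cdot\phi_r$ as a \emph{single} test function and apply Theorem \ref{thm;correlation} with $k=1$ in the variable $s$:
\begin{align*}
\int_X\phi_s\phi_{r+s}\,\mathrm d\nu=\int_X(\phi\,\phi_r)(a_sx)\,\mathrm d\nu(x)=\mu(\phi\,\phi_r)+O_{\phi,l}(e^{-\delta s}),
\end{align*}
where the implied constant involves only $\|\phi\,\phi_r\|$ for $r\le l$, which is bounded in terms of $l$ by Lemmas \ref{lem;growth} and \ref{lem;condition} --- so the troublesome factor $e^{\kappa t}$ never appears. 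Integrating over the strip yields the main term $T\int_0^l\mu(\phi_r\phi)\,\mathrm dr+O_{\phi,l}(1)$. On the complement, the $k=2$ estimate gives $|\int_X\phi_t\phi_s\,\mathrm d\nu|=O_\phi(e^{-\delta(t-s)})\le O_\phi(e^{-\delta l})$ per point, hence a total contribution $O_\phi(Te^{-\delta l})$. Dividing by $T$ and taking first $T\to\infty$, then $l\to\infty$ (using part (i) to identify $2\int_0^l\mu(\phi_r\phi)\,\mathrm dr\to\sigma(\phi,F)$), closes the argument. Without this step your proof of (ii) --- and hence of the $n=2$ moment needed for the central limit theorem --- is incomplete.
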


\begin{proof}
	(\rmnum{1})
    In view of  Lemmas \ref{lem;mixing},   there exists $\delta'>0$ 
	such that  $|\mu(\phi_t\phi)|\lesssim_\phi
	e^{-\delta' t}$ for  all $t\ge 0$.
	Therefore, $2\int_0^l \mu(\phi_t \phi) \dd t$ converges as $l\to \infty$.
	This proves that $\sigma(\phi, F)$ is well-defined. 
	
	 To prove $\sigma(\phi, F)$  is nonnegative, we  show that it is equal to 
	\begin{align*}
\sigma'\df	\lim_{T\to \infty} 
\int_X 	\frac{1}{T}\left  (\int_0^T  \phi _t \dd t \right )^2 \dd\mu=
 	\lim_{T\to \infty} 	\frac{1}{T}\int_0^T \int_0^T \mu (\phi_t \phi_s)  \dd t \dd s,
 	\end{align*}
 	which is obviously nonnegative. 
 By the  symmetry of $t$ and $ s$, the invariance of $\mu$ under $F$, we have
 	\[
 	\sigma'=\lim_{T\to \infty} 	\frac{2}{T}\int_0^T \int_s^T \mu(\phi_{t} \phi_s)\dd t \dd s= \lim_{T\to \infty} 	\frac{2}{T}\int_0^T \int_s^T \mu(\phi_{t-s}\phi)\dd t \dd s.
 	\]
 	We make change of variables $(s, t)\to (s, r)= (s, t-s)$, then
 	\begin{align*}
  \sigma'&=\lim_{T\to \infty} 	\frac{2}{T}\int_0^T \int_0^{T-r} \mu(\phi_{r}\phi)\dd s \dd r
  =\lim_{T\to \infty}	\frac{2}{T} \int_0^T  (T-r) \mu(\phi_{r}\phi) \dd r\\
& =	\sigma( \phi, F)-\lim_{T\to \infty} \frac{2}{T} \int_0^T r\mu(\phi_{r}\phi)\dd r.
 	\end{align*}
 	Since $|\mu(\phi_{r}\phi)|\lesssim_{\phi} e^{-\delta 'r}$ for  $r\ge 0$,  the value $ |\int_0^T r\mu(\phi_{r}\phi)\dd r|$ is uniformly bounded for all $T>0$. 
 	So $\lim_{T\to \infty} \frac{2}{T} \int_0^T r\mu(\phi_{r}\phi)\dd r=0$, and hence 
 $\sigma( \phi, F)=\sigma'$.

(ii) 
Suppose  $l\ge 1$  and $T\ge 10 l$. 
 For every $r$ with $0\le r\le l$, 
 we apply (\ref{eq;many}) for $k=1$,  the function $\phi \phi _{r }$ and $s\ge 0$, then  
   we have
\begin{align}\label{eq;made}
\int_X \phi_s \phi _{r+s }\dd  \nu = \mu(\phi \phi_r)+O_{\phi, l}(e^{-\delta  s}). 
\end{align}
We  decompose   the function 
on $X$ defined by 
\begin{align}\label{eq;21}
\frac{1}{2}\int_0^T\int_0^T \phi_s( x )\phi_t(  x)\dd t \dd s =\int_0^T\int_s^T \phi_s( x )\phi_t(  x)\dd t \dd s 
\end{align}
into $\xi(x)+\eta(x)$, where 
\begin{align*}
\xi(x)&= \int_0^T\int_s^{\min \{T, s+l \} }\phi_s(x) \phi_t(  x )\dd t \dd s,	 \\
\eta(x)&= \int_0^{T-l}\int_{s+l}^{T }\phi_s( x) \phi_t( x ) \dd t \dd s.
\end{align*}

To calculate $\nu(\xi)$ we make change of variables $(s, t)\to (s, r)= (s, t-s)$ and apply Fubini's theorem:
\begin{equation}
\label{eq;22}
\begin{aligned}
\nu(\xi)&=\int_0^l\int_0^{T-r }\int_X \phi_s\phi_{r+s}\dd\nu 
\dd s \dd r
\\
 (\mbox{by (\ref{eq;made})})\qquad &=\int_0^l\int_0^{T-r }\mu (\phi_{r}  \phi )+O_{ \phi, l}(e^{-\delta s}) \dd s\dd r \\
&=T\int_0^l   \mu (\phi_{r}  \phi ) \dd r+O_{\phi, l, \delta }(1). 
\end{aligned}
\end{equation}
On the other hand, by (\ref{eq;many}) with $k=2$,   $t_2=t$ and $t_1=s$, 
\begin{equation}
\label{eq;23}
\begin{aligned}
\nu(\eta)&=\int_0^{T-l} \int_{s+l}^T   \int _X
\phi_t\phi_s \dd\nu \dd t \dd s\\
&=\int_0^{T-l} \int_{s+l}^T  O_{\phi}(e^{-\delta (t-s)})\dd t\dd s\\
&=\int_0^{T-l} O_{\phi, \delta} (e^{-\delta l})\dd s\\
&=(T-l) O_{\phi, \delta } (e^{-\delta l}).
\end{aligned}
\end{equation}
By (\ref{eq;21}), (\ref{eq;22}) and (\ref{eq;23}), we have 
\begin{equation}\label{eq;final}
\begin{aligned}
\frac{1}{T} \int_0^T\int_0^T \int_X \phi_t( x )\phi_s( x)\dd \nu\dd t \dd s 
=\frac{2\nu(\xi)+2\nu(\eta)}{T} \\
=2\int_0^l   \mu (\phi_{r}  \phi ) \dd r+\frac{1}{T}O_{\phi, l, \delta}(1)+O_{\phi,\delta} (e^{-\delta l}).
\end{aligned}
\end{equation}

We show that the left hand side of  (\ref{eq;final}) is  arbitrarily close to $\sigma(\phi, F)$ provided
that  $T$ is sufficiently large. Given $\varepsilon>0$, in view of (\rmnum{1}), there exists $l\ge 1$ such that 
  \begin{align*}
  \left |2\int_0^l   \mu (\phi_{r}  \phi ) \dd r-\sigma(\phi, F)\right |< \varepsilon\quad \mbox{   and }
  \quad 
  |O_{\phi,\delta} (e^{-\delta l})|<\varepsilon.
  \end{align*} 
  For this fixed 
  $l$ we have  $|\frac{1}{T}O_{\phi, l, \delta}(1)|<\varepsilon$ provided that  $T$ is sufficiently large. 
  Therefore  		
(\ref{eq;variance}) holds.

(iii)
 Suppose (\ref{eq;cohomology}) holds for all $s> 0$. Recall that we have proved in (\rmnum{1}) that 
\begin{align}\label{eq;party}
\sigma( \phi, F)=\lim_{T\to \infty} 
\int_X 	\frac{1}{T}\left  (\int_0^T  \phi _t \dd t \right )^2 \dd\mu.
\end{align}
By
(\ref{eq;cohomology}), $\int_0^T  \phi _t(x) \dd t=\varphi(a_T x)-\varphi(x)$ for $\mu$-a.e.~$x$.
This together with  (\ref{eq;party}) and $\varphi\in L^2_\mu$ imply  
 $\sigma(\phi, F)=0$. 

Now we assume $\sigma( \phi, F)=0$. 
We claim  that \begin{center}
the $L^2$-norm of  $\xi_T( x)\df \int_0^T \phi_t(x)\dd t$
 is uniformly bounded for all $T\ge 0$.
\end{center}
Let $\eta(x)=\xi_1( x)$ and $\eta_i(x)=\eta(a_{i}x)$. Then the claim    is equivalent  to the $L^2$-norm of 
 $\xi_n( x)=\sum_{i=0}^{n-1} \eta_i$    is uniformly bounded for all $n\ge 2$.

 Since $\mu$ is $F$-invariant, 
 \begin{align}\label{eq;multiple}
  \int_X  (\sum_{i=0}^{n-1}\eta_i)^2\dd\mu =n \mu(\eta)^2 +2\sum_{i=1}^{n-1}\sum_{j=0}^{i-1}\mu(\eta_{i-j} \eta)=n \mu(\eta)^2 +2\sum_{i=1}^{n-1}(n-i)\mu(\eta_{i} \eta).
 \end{align}
   By (\ref{eq;party}) and (\ref{eq;multiple})
 \[
0= \sigma( \phi, F)=\lim_{m\to \infty} \int_X\frac{1}{m} \big (\sum_{i=0}^{m-1} \eta_i\big) ^2 \dd\mu= \mu(\eta^2)+\lim_{m\to \infty}2\sum_{i=1}^{m-1}\big (1-\frac{i}{m}\big)\mu(\eta_{i} \eta). 
 \]
We solve  $\mu(\eta^2) $ from the above equation  and plug it in (\ref{eq;multiple}), then 
\begin{align}
 \int_X  (\sum_{i=0}^{n-1}\eta_i)^2\dd\mu 
   & =
    -2n
   \left( \lim_{m\to \infty }\sum_{i=1}^{m-1}\big(1-\frac{i}{m}\big)\mu(\eta_{i} \eta)\right)+2\sum_{i=1}^{n-1}(n-i)\mu(\eta_{i} \eta)
\notag\\
   &= -2n
   \left( \lim_{m\to \infty }\left(\sum_{i=1}^{n-1}+\sum_{i=n}^{m-1}\right)\big(1-\frac{i}{m}\big)\mu(\eta_{i} \eta)\right)+2\sum_{i=1}^{n-1}(n-i)\mu(\eta_{i} \eta)
 \notag  \\
   &=-2n \lim_{m\to \infty }\sum_{i=n}^{m-1} \big(1-\frac{i}{m}\big) \mu(\eta_{i} \eta)-2\sum_{i=1}^{n-1}i\mu(\eta_{i} \eta).  \label{eq;strong}
 \end{align}

   Note that the function $\eta\in \widehat C_c^\infty(X)$.
   So Lemma \ref{lem;mixing} implies 
    \[
   \left  |2n\lim_{m\to \infty}\sum_{i=n}^{m-1}\big (1-\frac{i}{m}\big) \mu(\eta_i \eta) \right |\lesssim_\eta n  \sum_{i=n}^{m-1} e^{-\delta' i}\le \frac{ne^{-\delta' n}}{1-e^{-\delta' }},
    \]
   which  converges to zero as $n\to \infty$.  So the absolute value of the   first  term of (\ref{eq;strong}) is uniformly bounded for all $n\in \N$. 
 Similarly, we can uniformly  bound      the second term of (\ref{eq;strong}) as
   \[
   \left | 2
   \sum_{i=1}^{n-1}i\mu(\eta_i \eta)\right |\lesssim _\eta  \sum_{i=1}^{n-1} ie^{-{\delta' i}}\lesssim
   \int_1^\infty t e^{-\delta' t} \dd t < \infty.
   \]
    Therefore,  the $L^2$-norm of $\xi_n$  is uniformly bounded for all $n\in \N$ and the proof of the claim is complete. 
    
 Note that the Hilbert space $L^2_\mu$ is self-dual.   So the claim and  the Alaoglu's theorem   imply that there exists a subsequence $\{n_i \}$ of natural numbers  and $\varphi\in L^2_\mu$ such that 
     $\lim_{i\to \infty}\xi_{n_i}=-\varphi$  in the  weak$^*$ topology.
        We show that (\ref{eq;cohomology}) holds for this  
    $\varphi$. 
 It is not hard to see that   the function  $\varphi(a_s x)$ is a weak$^*$ limit  of  the sequence 
    $\{-\xi_{n_i}( a_s x)\}_i$ in $L^2_\mu$.
    Therefore, 
    in the weak$^*$ topology we have 
    \begin{align}\label{eq;guaiguai}
   \varphi\circ a_s- \varphi =\lim_{i\to \infty}\big (\xi_{n_i}-\xi_{n_i}\circ a_s\big )=
    \int_0^s \phi_t \dd t-\lim_{i\to \infty} \int_0^s \phi_{t+n_i}\dd t.
    \end{align}
 On the other hand,
 given any $\psi\in C_c^\infty(X)$,  by Lemma \ref{lem;mixing}
 \begin{align*}
\left |\lim_{i\to \infty}\int_0^s \int_X\phi_{t+n_i}(x)\psi(x) \dd\mu(x)\dd t \right |  
&\le \lim_{i\to \infty}\int_0^s  \left | \int_X\phi_{t+n_i}(x)\psi(x)\dd \mu(x) \right  | \dd t\\
&\lesssim_{\phi, \psi}\lim_{i\to \infty}
\int_0^s e^{-\delta' (t+n_i)}\dd t=0.
 \end{align*}
Therefore   $\lim_{i\to \infty} \int_0^s \phi_{t+n_i}(x)\dd t=0$ in the weak$^*$
    topology. 
    This observation together with (\ref{eq;guaiguai}) imply (\ref{eq;cohomology}). 
\end{proof}

 \section{Proof of the  central limit theorem}\label{sec;central}
 
Let the notation and the assumptions be as in Theorem \ref{thm;central h}. 
In particular,  we fix a function  $\phi\in \widehat C_c^\infty(X)$ and  take $\phi_t=\phi\circ a_t$. 
 In  this section the dependence of constants on $\phi$ and the probability measure
 $\nu$ will not be specified.  By possibly replacing  $\phi $ by $\frac{\phi}{N}$ we assume without loss of 
generality that 
 $\|\phi\|_{\mathrm{sup}}\le \|\phi\|\le 1$. 
We  assume that  $\phi$ is not identically zero, since otherwise the conclusion holds trivially. 
Let $M, \ell, \kappa\ge 1$ and $\delta>0$ so that Theorem \ref{thm;correlation}  and 
Lemma \ref{lem;growth} hold.

We need to show that 	the random variables 
\[
S_T: (X, \nu )\to \R \quad \mbox{where }\quad  S_T(x)=\frac{1}{\sqrt T}\int_0^T \phi_t(x)  \dd t 
\]
converges  as $T\to \infty$ to the normal distribution with mean zero and  variance 
$\sigma=\sigma(\phi, F)$.  
Our main tool is the following so called the second limit theorem in the theory of probability. 
\begin{thm}[\cite{fs}]
	\label{thm;sencond}
	If for every $n\ge 0$ \begin{equation}\label{eq;moment}
	\lim_{T\to \infty}\frac{1}{n!} \int_X  S_T^n (x)\dd \nu(x)=
	\left\{
	\begin{array}{ll}
		\frac{({\sigma}/{2})^{{n}/{2}}}{({n}/{2})!} & \mbox{for  } n \mbox{ even }\\
			0 & \mbox{for  } n \mbox{ odd }\\
	\end{array}
	\right. , 
	\end{equation}
then as $T\to \infty$ the distribution of random variables $S_T$ on $(X, \nu)$ converges  to the  normal distribution with mean zero and variance $\sigma$.  
\end{thm}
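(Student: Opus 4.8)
The plan is to deduce Theorem \ref{thm;sencond} from the classical method of moments, the point being that the Gaussian law $N(0,\sigma)$ is uniquely determined by its moments. Denote by $\nu_T$ the law of $S_T$ on $\R$ (the pushforward of $\nu$ under the map $S_T\colon X\to\R$), and write $m_n(\rho)=\int_\R x^n\dd\rho(x)$ for the $n$-th moment of a Borel probability measure $\rho$ on $\R$. A direct computation shows that the right-hand side of (\ref{eq;moment}) equals $\frac{1}{n!}m_n$, where $m_n$ is the $n$-th moment of $N(0,\sigma)$ (with the convention $N(0,0)=\delta_0$); thus (\ref{eq;moment}) says exactly that $m_n(\nu_T)\to m_n$ as $T\to\infty$ for every $n\ge0$. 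In particular each sequence $\{m_n(\nu_T)\}_{T}$ is bounded.

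First I would prove that the family $\{\nu_T\}$ is tight. Taking $n=2$ in (\ref{eq;moment}) gives $C:=\sup_{T}m_2(\nu_T)<\infty$, so Chebyshev's inequality yields $\nu_T(\{|x|>R\})\le C/R^2$ for all $T$ and all $R>0$; hence $\{\nu_T\}$ is tight, and by Prokhorov's theorem every sequence $T_k\to\infty$ admits a subsequence, still denoted $T_k$, with $\nu_{T_k}\Rightarrow\rho$ for some probability measure $\rho$ on $\R$.

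Next I would show that any such subsequential limit $\rho$ equals $N(0,\sigma)$. The crucial step is to transfer the moment convergence to the limit. Fix $n\ge0$; since $\sup_T m_{2n+2}(\nu_T)<\infty$, for every $R>0$ we have $\int_{\{|x|>R\}}|x|^n\dd\nu_{T_k}\le R^{-(n+2)}\,m_{2n+2}(\nu_{T_k})$, which tends to $0$ uniformly in $k$ as $R\to\infty$; that is, the functions $x\mapsto x^n$ are uniformly integrable for the family $\{\nu_{T_k}\}$. Combined with $\nu_{T_k}\Rightarrow\rho$, this gives $m_n(\rho)=\lim_{k\to\infty}m_n(\nu_{T_k})=m_n$. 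Hence $\rho$ has the same moments as $N(0,\sigma)$. Since the Gaussian $N(0,\sigma)$ has an entire moment generating function $t\mapsto e^{\sigma t^2/2}$ (equivalently, its moments satisfy Carleman's condition $\sum_n m_{2n}^{-1/(2n)}=\infty$), it is the unique probability measure on $\R$ with these moments, so $\rho=N(0,\sigma)$.

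Finally, since $\{\nu_T\}$ is tight and every subsequential weak limit is $N(0,\sigma)$, a standard argument gives $\nu_T\Rightarrow N(0,\sigma)$ as $T\to\infty$, which is the assertion; translating weak convergence into (\ref{eq;more}) for bounded continuous $f$ is immediate, and when $\sigma=0$ the same reasoning identifies every limit with $\delta_0$, i.e.\ $S_T\to0$ in distribution, so the right-hand side of (\ref{eq;more}) is $f(0)$. The only genuinely delicate point is the uniform-integrability estimate that licenses passing from weak convergence of $\nu_{T_k}$ to convergence of each moment; it relies on having control of a moment of order strictly larger than the one being computed, which is exactly what (\ref{eq;moment}) supplies for every order. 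Everything else---tightness via Chebyshev, Prokhorov, and moment-determinacy of the Gaussian---is classical, which is why the statement is quoted from \cite{fs}.
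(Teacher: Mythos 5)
The paper offers no proof of this statement: it is quoted verbatim from Fr\'echet--Shohat \cite{fs} as a known result, so there is nothing internal to compare your argument against. Your proposal is the standard modern proof by the method of moments and it is correct: the identity $\frac{1}{n!}m_n(N(0,\sigma))=\frac{(\sigma/2)^{n/2}}{(n/2)!}$ for even $n$ checks out (using $(n-1)!!=n!/(2^{n/2}(n/2)!)$), so (\ref{eq;moment}) is exactly convergence of all moments to those of $N(0,\sigma)$; tightness via the second moment and Chebyshev, Prokhorov, the uniform-integrability bound $\int_{\{|x|>R\}}|x|^n\dd\nu_{T_k}\le R^{-(n+2)}m_{2n+2}(\nu_{T_k})$ to pass each moment to a subsequential limit, and moment-determinacy of the Gaussian via Carleman's condition together force every subsequential weak limit to be $N(0,\sigma)$, hence the full convergence. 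The degenerate case $\sigma=0$ is also handled correctly (all limiting moments vanish except the zeroth, so the limit is $\delta_0$). This is a complete and self-contained justification of the cited theorem in the form the paper uses it.
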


Note that (\ref{eq;moment}) obviously  holds for $n=0$ and $n=1$.  The case of $n=2$
is proved in Lemma \ref{lem;variance}(\rmnum{2}). 
So we assume   $n\ge 3$ in the rest of this section unless otherwise stated.  
 We will 
 estimate  
\begin{align*}
b_{n, T}=
\frac{1}{n!} 
\int_X
\left(\int_0^T \phi_t(x)\dd t\right)^n\dd \nu
\end{align*}  
for each fixed $n\ge 3$. 
Write $\bt=(t_1, \ldots, t_n)$,  $
\dd \bt = \dd t_1 \cdots \dd t_n$, $\phi_\bt=\prod_{i=1}^n \phi_{t_i}$   and 
\[
[0,T]^n_{\le  }=\{\bt\in [0, T]^n: t_1\le t_2\le \cdots\le t_n   \}.
\] 
By the symmetry of the  variables of $\bt$ and the  Fubini's theorem,  
\begin{align}\label{eq;reduction}
b_{n, T}=
\int_X
\int_{[0,T]^n_{\le }} \phi_\bt(x)\dd \bt\dd \nu=\int_{[0,T]^n_{\le }}\int_X\phi_\bt(x)
\dd \nu\dd \bt.
\end{align}

An ordered partition   $$P=\{ \{1, 2,\ldots, m_1 \}, \{m_1+1, \ldots, m_2 \}, \ldots , \{m_{|P|-1}+1
,\ldots, n-1, n\}   \}  $$ of $\{1, \ldots,n   \}$ 
determines 
$|P|-1$ positive integers   $1\le  m_1<\cdots< m_{|P|-1}< n$  and vice versa.
 We use $P\prec  n$ to denote $P$ is an ordered partition of $\{1, \ldots, n \}$. 
Let $ |P|=k$ be the cardinality of 
$P$ and let 
 $m_0=0, m_{k}=n$. 
Although $k$ and  $m_i$  depend on $P$, we will not specify it for simplicity. 
We fix a positive  real number   
\begin{align}\label{eq;b}
b= n+\frac{4\kappa\ell +2}{\delta} 
\end{align}
and set
\begin{align}\label{eq;IP}
I_P=\{\bt\in [0, T]^n_\le :& t_{m_i+1}-t_{m_i}>b^{m_{i+1}-m_i}n\log T \mbox{ for }  1\le i < k   \\
&\mbox{ and } t_{j+1}-t_{j}\le b^{m_{i}-j}n\log T \mbox{ for } m_{i-1} <j< m_i, 1\le i\le k \}.\notag
\end{align}
We will always assume 
\begin{align}\label{eq;t}
T> Mn^2 b^n \log T
\end{align}
so that $I_P\neq \emptyset $
for any $P\prec n$. 
This will allow us to  avoid ambiguity in the discussions below.

\begin{lem}
	\label{lem;disjoint}
The set 	 $ [0, T]^n_\le $ is a disjoint union of $I_P $ where  $P$ is taken over  all the ordered  partitions of $\{1, \ldots,n \}$.
\end{lem}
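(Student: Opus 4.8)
The plan is to prove the lemma by induction via the following more flexible statement: for each $M\in\{1,\ldots,n\}$ the set $[0,T]^M_\le$ is the disjoint union, over all ordered partitions $P$ of $\{1,\ldots,M\}$, of the sets $J_P$ obtained from the defining formula of $I_P$ by replacing the top index $m_k=n$ with $m_k=M$ while keeping the common factor $n\log T$ in every threshold. For $M=n$ this is precisely the lemma. The point is that for a given $\bt\in[0,T]^M_\le$ the size of the last block of the partition to which it belongs is forced by scanning the gaps $t_M-t_{M-1},t_{M-1}-t_{M-2},\ldots$ from the right: if $\bt\in J_P$ with last cut $m_{k-1}$ and last block of size $s\df M-m_{k-1}$, then the within-block inequalities for the last block force $t_{M-r+1}-t_{M-r}\le b^{r}n\log T$ for $1\le r<s$, while the separating inequality at $m_{k-1}$ (present when $k\ge 2$) forces $t_{M-s+1}-t_{M-s}>b^{s}n\log T$. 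Hence $s$ must equal $s(\bt)\df\min\{r\in\{1,\ldots,M-1\}:t_{M-r+1}-t_{M-r}>b^{r}n\log T\}$, with the convention that $s(\bt)=M$ and $k=1$ when that set is empty; in particular the last cut $m_{k-1}=M-s(\bt)$ is uniquely determined by $\bt$.

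First I would treat the base case $M=1$, where the only ordered partition is $\{\{1\}\}$, the set $J_P$ carries no constraints, and $J_P=[0,T]^1_\le$. For the inductive step, given $\bt\in[0,T]^M_\le$, put $s=s(\bt)$. If $s=M$, I check that $\bt$ lies in $J_P$ for the one-block partition: its only constraints are $t_{M-r+1}-t_{M-r}\le b^{r}n\log T$ for $1\le r\le M-1$, which hold exactly because the set defining $s(\bt)$ is empty. If $s<M$, I take $M-s$ as the last cut, apply the inductive hypothesis to $(t_1,\ldots,t_{M-s})\in[0,T]^{M-s}_\le$ to obtain its unique partition $P'$, and adjoin the cut at $M-s$ to get $P\prec M$. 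Verifying $\bt\in J_P$ then reduces to three observations: the separating inequality at $M-s$ holds by the definition of $s$; the within-block inequalities for the last block hold by minimality of $s$; and every remaining inequality defining $J_P$ coincides with an inequality defining $J_{P'}$ in the subproblem. This shows the $J_P$ cover $[0,T]^M_\le$.

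Disjointness is the same observation read backwards: if $\bt\in J_P$ then its last cut must be $M-s(\bt)$ by the discussion above, and then restricting $\bt$ to its first $M-s(\bt)$ coordinates and invoking the inductive uniqueness pins down the remaining cuts, so $P$ is unique. I do not anticipate a genuine obstacle here; the care required is purely bookkeeping --- translating between the gap index $j$ used in the definition of $I_P$ and the distance-from-the-right index $r=M-j$, handling uniformly the degenerate cases $k=1$ (no separating inequalities) and singleton blocks (no within-block inequalities), and making sure the recursion keeps the threshold unit fixed at $n\log T$ rather than shrinking it to $(M-s)\log T$, which is exactly why the generalized statement is phrased with $n\log T$ held fixed. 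The argument is purely combinatorial and uses neither the lower bound (\ref{eq;t}) on $T$ nor anything about $b$ beyond $b>0$.
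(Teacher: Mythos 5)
Your proof is correct and is essentially the paper's argument: your $s(\bt)$, the minimal $r$ with $t_{M-r+1}-t_{M-r}>b^{r}n\log T$, is exactly the paper's $m=\max\{j<n: t_{j+1}-t_j>b^{n-j}n\log T\}$ read from the right, and both proofs then peel off the top block and recurse downward. Your version just makes the induction and the index bookkeeping explicit where the paper leaves them as "do the same calculation for $\{1,\ldots,m\}$."
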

\begin{proof}
Given $\bt \in [0, T]^n_\le$, we show that there exists a $P\prec n$ such that $\bt \in I_P$. 
We find the blocks of $P$ from the  top to the  bottom. 
Let \[
m=\max  \{1\le  j<n:  t_{j+1}-t_j  >b^{ n-j}n \log T  \}, 
\]
where we interpret $m=0$ if the set before taking the maximum is empty. This $m$ is $m_{k-1}$. To find other $m_i$ we do the same calculation for the set $\{1, 2, \ldots, m \}$. The process must terminates with  $m=0$ in finite steps and it gives a partition $P$ such that $\bt \in I_P$. It is not hard to see from the construction  that $P$ is uniquely determined by $\bt$. So $ [0, T]^n_\le $ is a disjoint union of $I_P$. 
\end{proof}

  In view of (\ref{eq;reduction}) and Lemma \ref{lem;disjoint}, we have 
\begin{align*}
b_{n, T}=\sum_{P\prec n}b_{n, T, P} \quad \mbox{where} \quad b_{n, T, P}=         \int_{I_P }\int_X
\phi _{\bt }(x) \dd \nu \dd \bt.
\end{align*}
Now we estimate $b_{n, T , P}$
for  each  fixed  partition $P$.
We write $\bt=(\bt_1, \ldots, \bt_k)$ according to the partition $P$, i.e.~$\bt_i=(t_{m_{i-1}+1},  \ldots, t_{m_{i}})$. For $1\le i\le k$ and $\bt \in [0, T]^n$  we let $$ 
r_{i , \bt}= t_{m_{i-1}+1}, s_{i, \bt }=t_{m_i}, 
\bt_i-r_{i, \bt}= (t_{m_{i-1}+1}-r_{i,\bt}, \ldots, t_{m_{i}}-r_{i, \bt }) \mbox{ and } 
\phi_{\bt_i}=\prod _{m_{i-1}< j \le m_{i}} \phi_{t_j}
.$$
Let  
\[
\br =(r_1, \ldots, r_k)\in \R^k \quad \mbox{and }\quad 
I_P(\br )=\{\bt\in I_P: r_{i, \bt}=r_i \}.
\]
 Let 
\[
R_P=\{\br \in [0, T]^k: I_P(\br )\neq \emptyset  \}, \dd \br =\prod_{i=1}^k \dd r_i\quad\mbox{and } \dd \bt_{P}=\prod_{i\in E_P }\dd t_i,
\]
where $E_P=\{1, \ldots, n \}\setminus \{ m_{i-1}+1: 1\le i\le k  \}$.
By slightly abuse of notation
 we write $\dd \bt=\dd \br \dd \bt_P$ by identifying $t_{m_{i-1}+1}$ with $r_i$ for $1\le i\le k$. 

\begin{lem}
	\label{lem;size}
	The volume of $I_P(\br )$ with respect to $\dd \bt_{P}$
	is at most $ b^{n^2}n^n\log^n T$. 
\end{lem}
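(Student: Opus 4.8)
The plan is to bound the volume of $I_P(\br)$ directly from the definition (\ref{eq;IP}) of $I_P$, by observing that each of the $n-k$ coordinates $t_j$ with $j\in E_P$ (i.e.\ $j$ ranging over $m_{i-1}<j\le m_i$ but $j\neq m_{i-1}+1$) is constrained to lie in an interval whose length is controlled by one of the inequalities appearing in (\ref{eq;IP}). More precisely, once the starting points $r_i=t_{m_{i-1}+1}$ are fixed, the definition of $I_P$ forces $t_{j+1}-t_j\le b^{m_i-j}n\log T$ for each $m_{i-1}<j<m_i$. So given $t_j$, the next coordinate $t_{j+1}$ ranges over an interval of length at most $b^{m_i-j}n\log T\le b^{n}n\log T$.

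First I would fix $\br\in R_P$ and set up the iterated integral over $\dd\bt_P$ block by block. Within the $i$-th block, $t_{m_{i-1}+1}=r_i$ is already determined, and I integrate successively over $t_{m_{i-1}+2},\ldots,t_{m_i}$; each such integration contributes a factor at most $b^{m_i-j}n\log T$ where $j$ runs over $m_{i-1}+1,\ldots,m_i-1$, i.e.\ the exponents of $b$ are $m_i-m_{i-1}-1, m_i-m_{i-1}-2,\ldots,1$. Multiplying over all blocks, the total power of $b$ that appears is $\sum_{i=1}^k\sum_{l=1}^{m_i-m_{i-1}-1} l$, which is crudely bounded by $\sum_{i=1}^k (m_i-m_{i-1})^2\le \left(\sum_i(m_i-m_{i-1})\right)^2=n^2$. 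The total number of $(n\log T)$ factors is the number of integrated coordinates, namely $|E_P|=n-k\le n$. Hence the volume of $I_P(\br)$ is at most $b^{n^2}(n\log T)^{n}=b^{n^2}n^n\log^n T$, which is exactly the claimed bound.

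The argument is essentially a routine product-of-intervals estimate, so there is no serious obstacle; the only mild care needed is in (a) checking that the upper-triangular structure of the constraints genuinely lets one integrate one coordinate at a time with the stated interval lengths, and (b) bookkeeping the exponents of $b$ so that their sum is genuinely $\le n^2$ rather than something larger. I would present it as: fix $\br$, write $\mathrm{vol}(I_P(\br))\le\int\cdots\int\prod_{j\in E_P}(\text{length of allowed range for }t_j)\,\dd\bt_P$, note each length is $\le b^{n}n\log T$ but sharpen the exponent of $b$ to the triangular sum within each block, and conclude. No use of the dynamics, the spectral gap, or Theorem~\ref{thm;correlation} is needed here — this is purely a measure-of-a-region computation, with the assumption (\ref{eq;t}) only ensuring $I_P\neq\emptyset$ so the statement is non-vacuous.
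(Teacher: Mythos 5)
Your proof is correct and takes essentially the same route as the paper, which simply observes that each of the $n-k$ integrated coordinates $t_{j+1}$ (with $m_{i-1}<j<m_i$) is confined to an interval of length at most $b^{m_i-j}n\log T\le b^n n\log T$ and multiplies these lengths, yielding $(b^n n\log T)^{n-k}\le b^{n^2}n^n\log^n T$. Your sharper triangular-sum bookkeeping of the exponents of $b$ is valid but not needed for the stated bound.
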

\begin{proof}
	This is almost a trivial estimate by noting that if $ m_{i-1}<j< m_i $ for some 
	$1\le i\le k$, then 
	$t_{j+1}-t_{j}\le b^n n \log T$. 
\end{proof}

\begin{lem}
	\label{lem;volume}
One has 
	\begin{align}\label{eq;rp1}
	0\le \frac{T^{k}}{k!}-	\int_{R_P}1 \dd \br\le {n^3 b^n T^{k-1}\log T}.
	\end{align}
\end{lem}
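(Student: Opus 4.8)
The plan is to compare $R_P$ with the full simplex $[0,T]^k_{\le} = \{\br : 0 \le r_1 \le \cdots \le r_k \le T\}$, whose volume is exactly $T^k/k!$. First I would verify the inclusion $R_P \subset [0,T]^k_{\le}$: if $\br \in R_P$ then there is some $\bt \in I_P(\br)$, and by definition $r_i = t_{m_{i-1}+1}$ with $t_{m_{i-1}+1} \le t_{m_i} < t_{m_i+1} = r_{i+1}$ (using $b^{m_{i+1}-m_i} n\log T > 0$), so the coordinates $r_1, \dots, r_k$ are strictly increasing; in particular they lie in the closed simplex, giving the lower bound $T^k/k! - \int_{R_P} 1\dd\br \ge 0$ in (\ref{eq;rp1}).

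For the upper bound I would estimate the volume of the complement $[0,T]^k_{\le} \setminus R_P$. A point $\br$ of the simplex fails to lie in $R_P$ precisely when $I_P(\br) = \emptyset$; the goal is to show this forces $\br$ to be close to the boundary of the simplex. Concretely, $I_P(\br) \ne \emptyset$ can be arranged as long as there is enough room between consecutive "anchors": we need to fit, between $r_i$ and $r_{i+1}$, a chain $t_{m_{i-1}+1} = r_i \le t_{m_{i-1}+2} \le \cdots \le t_{m_i}$ with consecutive gaps bounded by the prescribed $b^{m_i - j} n\log T$ (which is automatically satisfiable by taking all these gaps $0$), and then a single large gap $t_{m_i+1} - t_{m_i} > b^{m_{i+1}-m_i} n\log T$, i.e. $r_{i+1} - r_i > b^{m_{i+1}-m_i} n\log T$; and similarly we need $T - r_k \ge 0$ which is automatic, plus the interior chain for the last block requires only $r_k \le T$. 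So the only genuine constraints are $r_{i+1} - r_i > b^{m_{i+1}-m_i} n\log T$ for $1 \le i < k$. Hence
\[
[0,T]^k_{\le} \setminus R_P \subset \bigcup_{i=1}^{k-1} \{\br \in [0,T]^k_{\le} : r_{i+1} - r_i \le b^{m_{i+1}-m_i} n\log T\}.
\]
Each set in this union has volume at most $(b^{m_{i+1}-m_i} n\log T) \cdot \frac{T^{k-1}}{(k-1)!}$: integrate out $r_{i+1}$ over an interval of length $\le b^{m_{i+1}-m_i} n\log T$ for each fixed value of the remaining $k-1$ ordered coordinates, whose volume is at most $T^{k-1}/(k-1)!$.

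Finally I would sum over $i$: since $\sum_{i=1}^{k-1}(m_{i+1}-m_i) \le m_k = n$ and each exponent is at most $n$, we get $\sum_{i=1}^{k-1} b^{m_{i+1}-m_i} \le (k-1) b^n \le n b^n$, so the total complement volume is at most $n b^n \cdot n\log T \cdot \frac{T^{k-1}}{(k-1)!} \le n^2 b^n T^{k-1}\log T$, which is bounded by $n^3 b^n T^{k-1}\log T$. This is the required upper bound in (\ref{eq;rp1}). The main obstacle is being careful about the precise emptiness criterion for $I_P(\br)$ — in particular confirming that the within-block "small gap" conditions are vacuously satisfiable (so they impose no constraint on $\br$) and that the only binding conditions are the between-block "large gap" inequalities on the $r_i$'s; once that is pinned down the volume bound is routine.
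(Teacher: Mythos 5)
Your proof is correct and follows essentially the same route as the paper: both arguments rest on the observation that $I_P(\br)\neq\emptyset$ exactly when the between-block gaps $r_{i+1}-r_i>b^{m_{i+1}-m_i}n\log T$ hold (the within-block constraints being satisfiable with all gaps zero), so that $R_P$ is the simplex $[0,T]^k_\le$ minus a thin neighborhood of its diagonal faces. The only cosmetic difference is that the paper writes $\int_{R_P}1\dd\br$ as an explicit iterated integral and telescopes against $\prod_i\int_{r_{i-1}}^T 1\dd r_i=T^k/k!$, whereas you bound the complement by a union of slabs; both yield the same $O(k\, b^n n^2 T^{k-1}\log T)$ estimate.
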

\begin{proof}

Since $R_P$ is a subset of $[0, T]^k_\le $, one has 
\[
0\le \int_{[0, T]^k_\le}1\dd \br -\int_{R_P}1 \dd \br=\frac{T^k}{k!}-\int_{R_P}1 \dd \br,
\]	
which gives the lower bound in (\ref{eq;rp1}).

To prove the upper bound we need more precise information about $R_P$. 
Let 
\begin{align*}
b_{k-1}&=b^{m_k-m_{k-1}}n\log T, \\
b_{k-2}&=(b^{m_k-m_{k-1}}+b^{m_{k-1}-m_{k-2}})n\log T,\\
&\cdots \qquad \cdots \\	
b_1&=(b^{m_k-m_{k-1}}+\cdots+b^{m_2-m_1})n
\log T  .
\end{align*}
We set $r_0=0$,  $b_0=b_1$ and $b_{k}=0$.  We claim that 
\begin{align}\label{eq;RP1}
\int_{R_P }1 \dd \br =
\prod_{i=1}^k\int_{r_{i-1}+b_{i-1}-b_i}^{T-b_{i}}  1\dd r_i.
\end{align}
If $k=1$, then $R_P=[0, T]$ and (\ref{eq;RP1}) holds. 
Suppose $k\ge 2$. If $\br \in R_P$, then by definition $I_P(\br)\neq \emptyset$. So there exists $\bt \in I_P$ such that $r_{i, \bt}=r_i$. In view of the definition of $I_P$ in (\ref{eq;IP}), one has  
\begin{align}\label{eq;cold}
r_{i}>r_{i-1} + b_{i-1}-b_i\quad \forall 1<i\le k. 
\end{align}
On the other hand, if (\ref{eq;cold}) holds for some $\br$, then $I_P(\br) $
is nonempty and hence $\br\in R_P$. The formula (\ref{eq;RP1}) follows from 
(\ref{eq;cold}).

Now we prove the upper bound in  (\ref{eq;rp1}). 	
	Note that the length of   $$
	L_i\df [r_{i-1},T ]\setminus [r_{i-1}+b_{i-1}-b_i, T-b_i]$$
	is at most $b_{i-1}\le b^n n^2\log T$. So
	\begin{equation}
	\label{eq;RP2}
	\begin{aligned}
&	\prod_{i=1}^k\int_{r_{i-1}}^{T}  1\dd r_i-\prod_{i=1}^k\int_{r_{i-1}+b_{i-1}-b_i}^{T-b_i}  1
	\dd r_i  \\
	= & \sum_{j=1}^k\left( \prod_{i=1}^{j-1} \int_{r_{i-1}+b_{i-1}-b_i}^{T-b_i}   1\dd r_i\int_{L_j}1 \dd r_j
\prod _{s=j+1}^k \int_{r_{s-1}}^{T}1 \dd r_s \right)\\		
	\le  & \sum_{j=1}^k\left( \prod_{i=1}^{j-1} \int_{0}^{T}   1\dd r_i\int_{L_j}1 \dd r_j
\prod _{s=j+1}^k \int_{0}^{T}1 \dd r_s \right)\\		
	\le &k\cdot b^n n^2 \log T\cdot 
	T^{k-1}
	\le    b^n n^3 T^{k-1}\log T
	.
	\end{aligned} 	
	\end{equation}	
\end{proof}

\begin{cor}
	\label{cor;small}
	If $| P|=k< \frac{n}{2}$,  then
$ \lim_{T\to \infty}  \frac{b_{n, T, P}}{T^{\frac{n}{2}}}=0.
$
\end{cor}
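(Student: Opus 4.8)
The plan is to bound $b_{n,T,P}$ directly by estimating the inner integral $\int_X \phi_{\bt}(x)\dd\nu$ using the effective multiple correlation estimate of Theorem~\ref{thm;correlation}, then integrating the resulting bound over $I_P$ and comparing the total volume to $T^{n/2}$. First I would decompose $\bt$ according to the partition $P$ into blocks $\bt_1,\dots,\bt_k$ with the gaps between consecutive blocks being large (bigger than $b^{m_{i+1}-m_i}n\log T$) while gaps within a block are small (at most $b^{m_i-j}n\log T$). Since within a block the times $t_j$ differ by at most $b^n n\log T$, I can translate each block back to its left endpoint $r_i=r_{i,\bt}$: writing $g_i = a_{t_j - r_i}$ for the appropriate conjugations, each $\phi_{t_j} = \phi_{r_i}\circ$(small translation), and Lemma~\ref{lem;growth} controls the cost of replacing $\phi_{t_j}$-type factors by functions of the form $(\phi\circ a_{r_i + (\text{small}))}$; more cleanly, I can just treat $\phi_{\bt_i}$ as a single function $\Psi_i$ with $\|\Psi_i\|_{\sup}\le 1$ and $\|\Psi_i\|\le$ (some power of $T$ bounded by $b^{O(n^2)}n^n\log^n T$ via Lemmas~\ref{lem;growth} and~\ref{lem;condition}, using that the times inside a block span an interval of length $\le b^n n\log T$ so the norm grows by at most $e^{\kappa b^n n\log T}$, which is only polynomial in $T$ for fixed $n$).

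Next, with the $k$ block-functions $\Psi_1,\dots,\Psi_k$ evaluated at widely separated times $r_1 < r_2 < \cdots < r_k$ (separated by at least $n\log T$), I apply Theorem~\ref{thm;correlation} iteratively: peeling off the top block gives
\begin{align*}
\left|\int_X \prod_{i=1}^k \Psi_i(a_{r_i}x)\dd\nu - \mu(\Psi_k)\int_X\prod_{i=1}^{k-1}\Psi_i(a_{r_i}x)\dd\nu\right| \le Mk\,\max_i\|\Psi_i\|\cdot 1 \cdot e^{-\delta(r_k - r_{k-1})},
\end{align*}
and the gap $r_k-r_{k-1} > b n\log T$ makes $e^{-\delta(r_k-r_{k-1})} < T^{-\delta b n}$, which by the choice of $b$ in (\ref{eq;b}) beats the polynomial norm growth. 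Iterating down to $k=1$ and using $\mu(\phi)=0$ (so $\mu(\Psi_i)$ is, after accounting for the small intra-block translations, close to $\mu(\phi^{m_i-m_{i-1}})$... actually here one must be slightly careful: $\mu(\Psi_i)$ need not vanish, but the key point is that each application of the correlation estimate contributes a genuinely small error while the main term $\prod\mu(\Psi_i)\cdot(\text{leftover})$ telescopes). The upshot is that $\left|\int_X\phi_{\bt}(x)\dd\nu\right|$ is bounded by $\prod_{i=1}^k|\mu(\Psi_i)|$ plus a negligible (in $T$) error; and by Lemma~\ref{lem;mixing}, $|\mu(\Psi_i)| \lesssim e^{-\delta'(\text{intra-block gap})}$ summed appropriately — in fact for any block of size $\ge 2$ the factor $\mu(\Psi_i)$ carries exponential decay in the within-block separations, so integrating over $I_P(\br)$ the blocks of size $\ge 2$ contribute $O(1)$ rather than $O(\log^n T)$, while singleton blocks contribute $\mu(\phi)=0$. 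Hence $b_{n,T,P}$ essentially vanishes unless... wait, this needs the observation that if $k < n/2$ then $P$ has a block of size $\ge 2$, actually at least $n - k > n/2$ indices lie in blocks of size $\ge 2$, forcing extra decay.

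So the cleaner route: integrate the bound over $I_P$. The $\br$-integral ranges over $R_P \subseteq [0,T]^k$, contributing at most $T^k/k!$ by Lemma~\ref{lem;volume}; the $\bt_P$-integral over $I_P(\br)$ contributes at most $b^{n^2}n^n\log^n T$ by Lemma~\ref{lem;size}, and since $n-k$ of those coordinates lie in blocks of size $\ge 2$ with $\mu(\Psi_i)$ supplying exponential decay in those coordinates, that factor is actually $O_n(1)$. Therefore $|b_{n,T,P}| \lesssim_n T^k \cdot (\text{small}) + (\text{negligible error from correlations})$, so $|b_{n,T,P}|/T^{n/2} \lesssim_n T^{k-n/2}(\log T)^{O(n)} \to 0$ since $k < n/2$. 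The main obstacle I anticipate is bookkeeping the error terms in the iterated application of Theorem~\ref{thm;correlation}: one must verify that at each of the $k$ peeling steps the accumulated sup-norm factors $(\max\|\Psi_i\|_{\sup})^{j-1} \le 1$ keep the errors from compounding badly, and that the polynomial-in-$T$ growth of $\|\Psi_i\|$ (from Lemma~\ref{lem;growth}, since intra-block time spans are $O(\log T)$) is genuinely dominated by the $e^{-\delta\cdot b n\log T}$ gain — this is exactly what the definition $b = n + (4\kappa\ell+2)/\delta$ is engineered to guarantee, so the constant-chasing, while routine, is the delicate part.
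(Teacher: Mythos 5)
Your proposal is correct, and its decisive step is exactly the paper's (one-line) proof: since $\|\phi\|_{\sup}\le 1$ the integrand is bounded by $1$, so Lemmas \ref{lem;size} and \ref{lem;volume} give $|b_{n,T,P}|\le \frac{T^k}{k!}\cdot b^{n^2}n^n\log^n T$, and $T^{k-n/2}\log^n T\to 0$ because $k<\frac n2$ strictly. All of the machinery involving Theorem \ref{thm;correlation}, the peeling of blocks, and the claimed decay of $\mu(\Psi_i)$ is unnecessary here (it is what the paper reserves for the cases $|P|\ge \frac n2$), and the side-claim that blocks of size $\ge 2$ force exponential decay of $\mu(\Psi_i)$ in the within-block separations is not justified: inside a block the gaps $t_{j+1}-t_j$ are only bounded \emph{above} by $b^{m_i-j}n\log T$, not below, so Lemma \ref{lem;mixing} gives no decay there and the $\log^n T$ volume factor cannot in general be improved to $O(1)$ — fortunately it does not need to be.
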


\begin{proof}
Recall that we assume $\|\phi\|\le 1$. So by Lemmas \ref{lem;size} and \ref{lem;volume}
\begin{align*}
|b_{n, T, P}|\le \int_{R_P} \int_{I_P(\br)}1 \dd \bt _P \dd \br \le \frac{T^k}{k!}\cdot  b^{n^2}n^n \log^n   T. 
\end{align*}
The conclusion follows from the above estimate and the assumption that $k<\frac{n}{2}$. 
	
 \end{proof}

Now we estimate $b_{n,T, P}$ for $k=|P|\ge \frac{n}{2}> 1$ using Theorem \ref{thm;correlation}. 
By Lemma \ref{lem;growth}, 
$\|\phi_t\|\le e^{ \kappa t }\|\phi \|$ for $t\ge 0 $. 
So in view of the definition of 
$I_P$ in (\ref{eq;IP}) and the assumption $\|\phi \|\le 1$
\begin{equation}
\label{eq;estimate}
\begin{aligned}
\max_{m_{i-1}< j\le m_i} \|\phi_{t_j-r_{i, \bt} } \|&\le 
\exp (\kappa (t_{m_i}-r_{i, \bt}) ) \\
&=
\exp (\kappa \sum_{m_{i-1}<j<m_i} (t_{j+1}-t_j))\\
&\le \exp ((\kappa n\log T) \sum_{m_{i-1}<j<m_i}  b^{m_i-j} )\\
&\le \exp (2\kappa nb^{m_i-m_{i-1}-1}\log T ),
\end{aligned}
\end{equation}
where in the last estimate we use $b\ge n$. 
In (\ref{eq;estimate}), {we interpret $\sum_{m_{i-1}<j<m_i} (t_{j+1}-t_j)=0$ if there is no integer $j$ satisfying $m_{i-1}<j<m_i$.}
By Lemma  \ref{lem;condition},  the assumption $\|\phi\|_{\sup}\le \|\phi\| \le 1$
and (\ref{eq;estimate}), 
\begin{equation}\label{eq;combine}
\begin{aligned}
\|\phi_{\bt_i-r_{i, \bt}}\|&\le (m_{i}-m_{i-1})^\ell \cdot
(\max_{m_{i-1}< j\le m_i} \|\phi_{t_j-r_{i, \bt} } \|)^{\ell}\\
 & \le (m_i-m_{i-1})^\ell \cdot  \exp ({2\kappa n\ell b^{m_i-m_{i-1}-1}\log T })\\
 & \le n^\ell \exp ({2\kappa n\ell b^{m_i-m_{i-1}-1}\log T })\\
 & \le\exp ({4\kappa n\ell b^{m_i-m_{i-1}-1}\log T }),
\end{aligned}
\end{equation}
where in the last estimate we use $n\le e^n$.

For $1\le i\le k$, 
using  Theorem \ref{thm;correlation} with the product of $m_{i-1}+1$ functions
$\phi_{s} \ (1\le s\le m_{i-1})$ and $\phi_{\bt_i}$, we have
\begin{equation}\label{eq;jiaquan}
\begin{aligned}
&\left|\int_X \prod_{j=1}^{i}\phi_{\bt_j} \dd \nu-\int_{X}\prod_{j=1}^{i-1}\phi_{\bt_j}\dd\nu \int_X\phi_{\bt_i-r_{i, \bt}}\dd \mu\right| \\
=&\left| \int_X\phi_{t_1}\phi_{t_2}\cdots \phi_{t_{m_{i-1}}}\phi_{(\bt_i-r_{i, \bt})+r_{i, \bt}} \dd \nu-\int_{X}\phi_{t_1}\phi_{t_2}\cdots \phi_{t_{m_{i-1}}}\dd\nu \int_X\phi_{\bt_i-r_{i, \bt}}\dd \mu\right| \\
\le & M (m_{i-1}+1) \cdot\max \{\|\phi \|, \|\phi_{\bt_i-r_{i, \bt}} \|  \} 
e^{-\delta(r_{i, \bt}-s_{i-1, \bt}) }, 
\end{aligned}
\end{equation}
where we use $\|\phi \|_{\sup}\le 1$ and set $s_{0, \bt}=0$.
The definition of 
$I_P$ in (\ref{eq;IP}) implies 
\begin{align}\label{eq;0.07}
r_{i, \bt}-s_{i-1, \bt}>b^{(m_i-m_{i-1})}n\log T\quad \mbox{for }i\ge 2. 
\end{align}
By (\ref{eq;combine}), (\ref{eq;jiaquan})  and  (\ref{eq;0.07}) we have for $2\le i\le k$
 \begin{equation}\label{eq;ran}
 \begin{aligned}
& \left|\int_X \prod_{j=1}^{i}\phi_{\bt_j} \dd \nu-\int_{X}\prod_{j=1}^{i-1}\phi_{\bt_j}\dd\nu \int_X\phi_{\bt_i-r_{i, \bt}}\dd \mu\right| \\
 \le&  M n \exp( 4\kappa b^{(m_i-m_{i-1}-1)}\ell n\log T )\cdot \exp({ -\delta b^{(m_i-m_{i-1})}n\log T}) \\
 \le & {e^{-n \log T}}\le {T^{-n}} , 
 \end{aligned}
 \end{equation}
 where in the last line we use $b\ge \frac{4\kappa \ell+2}{\delta}$ in (\ref{eq;b}) and $T\ge Mn$ in (\ref{eq;t}). 
\begin{rem}
	\label{rem;simple}
	The estimate (\ref{eq;ran}) also holds for $i=1$ provided that $t_1\ge b^{m_1} n \log T$, which will be used later. 
\end{rem} 

By   (\ref{eq;ran}) and    the assumption $\|\phi\|_{\sup}\le 1 $ we have 
\begin{equation}\label{eq;main1}
\begin{aligned}
&\left | b_{n, T, P}-\int_{I_P}\left(\int_{X}\phi_{\bt_1}\dd\nu\right)\left( \prod_{i=2}^k \int_X \phi_{\bt_i-r_{i, \bt}}\dd \mu\right )\dd \bt\right | \\
\le   &\int_{I_P} \left| \int_X	 \prod_{i=1}^k\phi_{\bt_i} \dd \nu-\int_{X}\phi_{\bt_1}\dd\nu\prod_{i=2}^k \int_X\phi_{\bt_i-r_{i, \bt}}\dd \mu \right |
\dd  \bt  \\
\le   &\int_{I_P}\sum_{i=2}^k \left| \int_X	 \prod_{j=1}^{i}\phi_{\bt_j} \dd \nu-\int_{X}\prod_{j=1}^{i-1}\phi_{\bt_j}\dd\nu \int_X\phi_{\bt_i-r_{i, \bt}}\dd \mu \right | 
\dd  \bt  \\
\le & \int_{I_P}n T^{-n}\dd \bt \le 1.
\end{aligned}
\end{equation}
\begin{lem}
	\label{lem;separate}
	Suppose $n\ge 3$ and  
		 $k=| P| \ge  \frac{n+2}{2}$,  then
	$ \lim_{T\to \infty}  \frac{b_{n, T, P}}{T^{\frac{n}{2}}}=0.
	$
\end{lem}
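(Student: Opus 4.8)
The plan is to show that when $k = |P| \ge \frac{n+2}{2}$, the main term in the decomposition of $b_{n,T,P}$ is too small to contribute at the scale $T^{n/2}$. Starting from the estimate (\ref{eq;main1}), it suffices to bound
\[
\int_{I_P}\left(\int_X \phi_{\bt_1}\dd\nu\right)\left(\prod_{i=2}^k \int_X \phi_{\bt_i - r_{i,\bt}}\dd\mu\right)\dd\bt.
\]
First I would observe that whenever a block $\bt_i$ has size $m_i - m_{i-1} = 1$, the corresponding factor is $\int_X \phi_{\bt_i - r_{i,\bt}}\dd\mu = \mu(\phi) = 0$ by the hypothesis $\mu(\phi)=0$ (here $\phi_{\bt_i-r_{i,\bt}} = \phi_0 = \phi$ since the single time equals $r_{i,\bt}$). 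So the integrand vanishes identically unless \emph{every} block of $P$ with index $i \ge 2$ has size at least $2$; combined with the constraint $k \ge \frac{n+2}{2}$, i.e.\ $k > n/2$, a counting argument on block sizes $\sum_{i=1}^k (m_i - m_{i-1}) = n$ forces this to be impossible (at most one block — the first — can have size $1$, so $n \ge 2(k-1) + 1 = 2k-1 > n-1$... more precisely $n \ge 2k - 1$, contradicting $k \ge (n+2)/2$ which gives $n \le 2k - 2$). Hence the main term is \emph{exactly zero}, and $|b_{n,T,P}| \le 1$ by (\ref{eq;main1}), which is $o(T^{n/2})$ since $n \ge 3$.

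The key steps in order: (1) identify that for $i \ge 2$, a singleton block contributes the factor $\mu(\phi) = 0$; (2) note that among the blocks $\{m_{i-1}+1,\ldots,m_i\}$ with $i \ge 2$, if all have size $\ge 2$ then $n = \sum_{i=1}^k (m_i - m_{i-1}) \ge (m_1 - m_0) + 2(k-1) \ge 1 + 2(k-1) = 2k - 1$; (3) since $k \ge \frac{n+2}{2}$ gives $2k \ge n+2$, i.e.\ $2k - 1 \ge n + 1 > n$, we get a contradiction, so some block with index $i \ge 2$ is a singleton and the main term integrand is identically zero on $I_P$; (4) conclude $|b_{n,T,P}| \le 1$ from (\ref{eq;main1}), hence $\lim_{T\to\infty} b_{n,T,P}/T^{n/2} = 0$.

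I expect the only subtle point to be bookkeeping the first block: it is the only block whose leftmost index is $m_0 + 1 = 1$, and for it the quantity $\int_X \phi_{\bt_1}\dd\nu$ is the measure of a $\nu$-average rather than a $\mu$-average, so it need not vanish when $\bt_1$ is a singleton. That is precisely why the bound $k \ge \frac{n+2}{2}$ rather than $k \ge \frac{n+1}{2}$ is needed — it leaves room for exactly one "free" singleton block. The main obstacle is therefore not analytic but combinatorial: making sure the inequality $n \ge 2k-1$ is applied with the correct accounting of which block is exempt. Everything else is a direct appeal to (\ref{eq;main1}) and the polynomial-versus-$T^{n/2}$ comparison, exactly as in Corollary \ref{cor;small}.
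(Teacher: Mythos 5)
Your proposal is correct and follows essentially the same route as the paper: invoke (\ref{eq;main1}), observe that $k\ge\frac{n+2}{2}$ forces some block with index $i\ge 2$ to be a singleton so that the product $\prod_{i=2}^k\int_X\phi_{\bt_i-r_{i,\bt}}\dd\mu$ vanishes (using $\mu(\phi)=0$), and conclude $|b_{n,T,P}|\le 1=o(T^{n/2})$. Your explicit counting $n\ge 1+2(k-1)=2k-1$ versus $2k-2\ge n$, and the remark that only the first block is exempt because its factor is a $\nu$-average, merely spell out what the paper leaves implicit.
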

\begin{proof}
	If $k>\frac{n}{2}$, then the partition  $P$ must contains a single number. 
	Since $k\ge \frac{n+2}{2}$, there exists $i>1$ such that $\{ m_i\}\in P$.  
	Therefore, 
	\[
	\prod_{i=2}^k \int_X \phi_{\bt_i-r_{i, \bt}}\dd \mu=0.
	\]
	This equality and (\ref{eq;main1}) implies 	$ \lim_{T\to \infty}  \frac{b_{n, T, P}}{T^{\frac{n}{2}}}=0.
	$
\end{proof}

Now there are two cases left, namely, 
\begin{align}
&n \mbox{ is odd  and } P=P_1=\{ \{1 \} , \{2, 3\} , \cdots,\{n-1, n \}  \},
\label{eq;odd}\\
&n \mbox{ is even  and } P=P_2=\{ \{1, 2 \} , \{3, 4\} , \cdots,\{n-1, n \}  \}.
\label{eq;even}
\end{align}

\begin{lem}
	\label{lem;equal} If $n\ge 3$ is even,   and $P=P_2$ is given by (\ref{eq;even}), then for $k=|P|=\frac{n}{2}$
	\begin{align}\label{eq;equal}
\lim_{T\to \infty}\frac{b_{n, T, P} }{T^k}	=\frac{\left(\frac{\sigma}{2}\right)^{k}}{k!}.  
	\end{align}
\end{lem}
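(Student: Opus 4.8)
\textbf{Proof plan for Lemma \ref{lem;equal}.}

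The plan is to start from the estimate (\ref{eq;main1}), which reduces $b_{n,T,P_2}$ up to an error of size $O(1)$ to the integral
\[
\int_{I_{P_2}}\left(\int_X \phi_{\bt_1}\dd\nu\right)\left(\prod_{i=2}^k \int_X \phi_{\bt_i - r_{i,\bt}}\dd\mu\right)\dd\bt.
\]
For $P_2$ every block has exactly two elements, so $\bt_i=(t_{2i-1},t_{2i})$ and $\bt_i-r_{i,\bt}=(0, t_{2i}-t_{2i-1})$; writing $u_i=t_{2i}-t_{2i-1}$ we have $\int_X\phi_{\bt_i-r_{i,\bt}}\dd\mu=\mu(\phi\,\phi_{u_i})$ for $i\ge 2$, and similarly $\int_X\phi_{\bt_1}\dd\nu=\int_X\phi_{r_1}\phi_{r_1+u_1}\dd\nu$. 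First I would handle the $i=1$ factor: split $I_{P_2}$ according to whether $r_1=t_1\ge b^{m_1}n\log T$ or not. On the part where $t_1$ is large, Remark \ref{rem;simple} lets me replace $\int_X\phi_{\bt_1}\dd\nu$ by $\mu(\phi\,\phi_{u_1})$ with the same negligible error $T^{-n}$; the complementary part has $r_1$ ranging over an interval of length $O(b^n n\log T)$ and contributes at most $O(b^{n^2}n^n T^{k-1}\log^{n} T)=o(T^k)$ by Lemmas \ref{lem;size} and \ref{lem;volume}. So up to $o(T^k)$,
\[
b_{n,T,P_2}=\int_{I_{P_2}}\prod_{i=1}^{k}\mu(\phi\,\phi_{u_i})\dd\bt + o(T^k).
\]

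Next I would change variables on $I_{P_2}$ to $(r_1,\ldots,r_k,u_1,\ldots,u_k)$ where $r_i$ is the left endpoint of block $i$ and $u_i$ the within-block gap, with Jacobian one. The constraints defining $I_{P_2}$ become: $0\le u_i\le bn\log T$ for each $i$ (the "within-block small gap" condition, since each block has size $2$), and $r_{i+1}-r_{i}-u_i > b^{2}n\log T$ for $1\le i<k$ (the "between-block large gap" condition), together with $\br\in R_{P_2}$-type endpoint constraints. The $u$-integral factorizes from the $r$-integral up to the coupling $r_{i+1}>r_i+u_i+b^2 n\log T$; but since each $u_i\le bn\log T$ and the genuine separation required is only $b^2 n\log T$, I can absorb the $u_i$ into the $r$-separation at the cost of replacing $b^2 n\log T$ by $(b^2+b)n\log T$, which changes $R_{P_2}$ by a set of $\dd\br$-volume $O(b^n n^2 T^{k-1}\log T)=o(T^k)$, exactly as in the proof of Lemma \ref{lem;volume}. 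Thus
\[
b_{n,T,P_2}=\left(\int_{R_{P_2}}1\dd\br\right)\prod_{i=1}^{k}\left(\int_0^{bn\log T}\mu(\phi\,\phi_{u})\dd u\right)+o(T^k).
\]
By Lemma \ref{lem;volume}, $\int_{R_{P_2}}1\dd\br = T^k/k! + o(T^k)$; and by Lemma \ref{lem;mixing} the tail $\int_{bn\log T}^{\infty}|\mu(\phi\,\phi_u)|\dd u\lesssim_\phi e^{-\delta' b n\log T}\to 0$, so each bracketed factor converges to $\int_0^\infty\mu(\phi\,\phi_u)\dd u=\tfrac12\sigma(\phi,F)$ by part (\rmnum{1}) of Lemma \ref{lem;variance} and the definition (\ref{eq;variance 0}). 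Dividing by $T^k$ and letting $T\to\infty$ gives $\lim_{T\to\infty}b_{n,T,P_2}/T^k=(\sigma/2)^k/k!$, which is (\ref{eq;equal}).

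The main obstacle I expect is bookkeeping the interdependence between the $r$- and $u$-variables so that the $u$-integrals genuinely decouple: the region $I_{P_2}$ is not a product region, and one must argue carefully (as sketched above, by enlarging the between-block separation to swallow the within-block gaps and estimating the resulting symmetric difference of regions) that replacing it by a product region costs only $o(T^k)$. Everything else — the $i=1$ boundary term, the tail of the mixing integral, the volume asymptotics of $R_{P_2}$ — is routine given Lemmas \ref{lem;size}, \ref{lem;volume}, \ref{lem;mixing} and Remark \ref{rem;simple}.
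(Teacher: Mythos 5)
Your proposal is correct and takes essentially the same approach as the paper: reduce via (\ref{eq;main1}) and Remark \ref{rem;simple} to a product of autocorrelations $\mu(\phi\,\phi_{u_i})$, replace $I_{P_2}$ by a product region at a cost of $o(T^k)$, and finish with Lemmas \ref{lem;volume} and \ref{lem;variance}(\rmnum{1}). The paper implements the decoupling by restricting to the subset $R_P'\subset R_P$ where $r_i-r_{i-1}\ge 2b^2n\log T$ and $r_k\le T-bn\log T$, over which the fibers $I_P(\br)$ are exact boxes $\{t_{2i-1}=r_i,\ r_i\le t_{2i}\le r_i+bn\log T\}$; this is the same device as your enlarged between-block separation (and it simultaneously handles your $i=1$ boundary case, since $r_1\ge 2b^2n\log T$ there).
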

\begin{proof}
	Let   $$R_P'=\{\br\in R_P: r_i-r_{i-1}\ge 2b^2 n \log T \mbox{ for } 1\le  i\le k    \mbox{ and  } r_k \le T- b n\log T  \},$$
	where $r_0=0$.
	It can be checked directly from the  definition that for every $\br \in R_P'$, 
	\begin{align}\label{eq;good}
	I_P(\br )=\{\bt \in \R^n: t_{2i-1}=r_i\mbox{ and } r_i\le t_{2i}\le r_i+bn\log T   \}. 
	\end{align}
	By Fubini's theorem and (\ref{eq;good})
	\begin{equation}
	\label{eq;kesou2}
	\begin{aligned}
	\int_{R_P'}\int_{I_P(\br )} \prod_{i=1}^k \mu(\phi_{\bt_i-r_{i, \bt}})  \dd \bt_{P} \dd \br&=\int_{R_P'}\Big(\int_0^{bn \log T} \mu(\phi_s\phi  )\dd s\Big)^{k
	} \dd \br . 
	\end{aligned}
	\end{equation}
Similar to (\ref{eq;main1}) and using Remark \ref{rem;simple} we have 
\begin{align}
\label{eq;easy}	
\left|\int_{R_P'}\int_{I_P(\br )} \int_X\prod_{i=1}^k \phi_{\bt_i}\dd \nu\dd \bt-
\int_{R_P'}\int_{I_P(\br )} \prod_{i=1}^n \mu(\phi_{\bt_i-r_{i, \bt}})  \dd \bt_{P} \dd \br\right|\le 1.
\end{align}
	
On the other hand, 	similar to (\ref{eq;RP2}),  we have 
	\begin{align}\label{eq;kesou}
	\int_{R_P\setminus R_P'}1 \dd \br \le 2b^2 n^2T^{k-1} \log T. 
	\end{align}
By Lemma \ref{lem;size},  (\ref{eq;kesou}) and the assumption $\|\phi\|_{\sup}\le 1$
	 \begin{align}\label{eq;coffee}
\left |b_{n, T, P}-\int_{R_P'}\int_{I_P(\br )} \int_X\prod_{i=1}^k \phi_{\bt_i}\dd \nu\dd \bt\right |\le \int_{R_P\setminus R_P'}\int_{I_P(\br)}1 \dd \bt\le 2 n^{n+2} b^{n^2+2} T^{k-1}   \log^{n+1} T.
	\end{align}
By Lemma \ref{lem;volume} and  (\ref{eq;kesou}) 
\begin{equation}\label{eq;music}
\frac{T^k}{k!}-\int_{R_P'}1 \dd \br=
\frac{T^k}{k!}-\int _{R_P}1\dd \br+\int_{R_P\setminus R_P'}1 \dd \br
\le 3b^n n^3T^{k-1} \log T.
\end{equation}
To sum up, by (\ref{eq;coffee})
\begin{align*}
\lim_{T\to \infty}\frac{b_{n, T, P} }{\frac{T^k}{k!}}&= \lim_{T\to \infty}
k! T^{-k}\int_{R_P'}\int_{I_P(\br )} \int_X\prod_{i=1}^k \phi_{\bt_i}\dd \nu\dd \bt\\
(\mbox{by \ref{eq;easy}})\qquad &= \lim_{T\to \infty}k! T^{-k}\int_{R_P'}\int_{I_P(\br )} \prod_{i=1}^n \mu(\phi_{\bt_i-r_{i, \bt}})  \dd \bt_{P} \dd \br\\
(\mbox{by \ref{eq;kesou2}})\qquad&=\lim_{T\to \infty}k! T^{-k}\int_{R_P'}
\Big(\int_0^{bn \log T} \mu(\phi_s\phi  )\dd s\Big)^{k
} \dd \br\\
(\mbox{by \ref{eq;music}})\qquad&=\lim_{T\to \infty}\Big(\int_0^{bn \log T} \mu(\phi_s\phi  )\dd s\Big)^{k
}\\
(\mbox{by Lemma \ref{lem;variance}(\rmnum{1})})\qquad&=\left(\frac{\sigma}{2}\right)^k.
\end{align*}
\end{proof}

\begin{lem}
	\label{lem;odd} If $n\ge 3$ is odd,   and $P=P_1$ is given by  (\ref{eq;odd}), then 
	\begin{align}\label{eq;oddp}
	\lim_{T\to \infty}\frac{b_{n, T, P} }{T^{\frac{n}{2}}}	=0.
	\end{align}
\end{lem}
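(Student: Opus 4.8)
The plan is to combine the master estimate~(\ref{eq;main1}) with the $k=1$ case of Theorem~\ref{thm;correlation} applied to the block $\{1\}$ of $P_1$. Since $n$ is odd, $P_1$ has $k=|P_1|=\frac{n+1}{2}$ blocks, so that $k-1=\frac{n-1}{2}<\frac n2$; it therefore suffices to bound $b_{n,T,P}$ by $T^{k-1}$ up to logarithmic factors, which is one power of $T$ below the size $T^{k}$ predicted by Lemmas~\ref{lem;size} and~\ref{lem;volume}. That extra power of $T$ will be gained from the singleton block together with the hypothesis $\mu(\phi)=0$.

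First I would apply~(\ref{eq;main1}), which gives
\[
\left| b_{n,T,P}-\int_{I_P}\Big(\int_X\phi_{\bt_1}\dd\nu\Big)\prod_{i=2}^k\Big(\int_X\phi_{\bt_i-r_{i,\bt}}\dd\mu\Big)\dd\bt\right|\le 1 .
\]
Because the first block of $P_1$ is the singleton $\{1\}$, one has $\bt_1=(t_1)$ and $\phi_{\bt_1}=\phi_{t_1}$, so $\int_X\phi_{\bt_1}\dd\nu=\int_X\phi(a_{t_1}x)\dd\nu(x)$. Theorem~\ref{thm;correlation} with $k=1$ applied to the single function $\phi$, together with $\mu(\phi)=0$ and $\|\phi\|\le 1$, then yields $\big|\int_X\phi(a_{t_1}x)\dd\nu(x)\big|\le M e^{-\delta t_1}$ for all $t_1\ge 0$. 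The remaining factors are bounded trivially by $\big|\int_X\phi_{\bt_i-r_{i,\bt}}\dd\mu\big|\le\|\phi\|_{\sup}\le 1$.

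Hence the integral in the display above is, in absolute value, at most $\int_{I_P}Me^{-\delta t_1}\dd\bt$. Writing $\bt=(\br,\bt_P)$ as in \S\ref{sec;central}, with the first coordinate $r_1$ of $\br$ equal to $t_1$, and using Lemma~\ref{lem;size} to bound the $\dd\bt_P$-volume of $I_P(\br)$ by $b^{n^2}n^n\log^n T$, I get
\[
\int_{I_P}Me^{-\delta t_1}\dd\bt\le b^{n^2}n^n\log^n T\cdot M\int_{R_P}e^{-\delta r_1}\dd\br\le\frac{M}{\delta}\,b^{n^2}n^n\,T^{k-1}\log^n T ,
\]
where in the last step I enlarged $R_P\subset[0,T]^k$ and integrated $r_1$ over $[0,\infty)$. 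Combining with~(\ref{eq;main1}) gives $|b_{n,T,P}|\le 1+\frac{M}{\delta}b^{n^2}n^nT^{k-1}\log^n T$, and since $k-1=\frac{n-1}{2}$ this yields $b_{n,T,P}/T^{n/2}=O(T^{-1/2}\log^n T)\to 0$, i.e.~(\ref{eq;oddp}).

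I do not expect a real obstacle: every input is already available, and only the bookkeeping needs attention. The one conceptual point is to notice that a block consisting of the single index $1$ produces the factor $\int_X\phi(a_{t_1}x)\dd\nu(x)$, which decays exponentially in $t_1$ exactly because $\mu(\phi)=0$; integrating it over $t_1\in[0,T]$ therefore costs only $O(1)$ rather than $O(T)$, and this is precisely the power of $T$ that separates $T^{k-1}$ from $T^{n/2}$. This is the same mechanism isolated by Remark~\ref{rem;simple} inside the proof of Lemma~\ref{lem;equal}.
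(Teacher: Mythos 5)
Your proposal is correct and follows essentially the same route as the paper's own proof: both start from~(\ref{eq;main1}), bound the $\dd\bt_P$-volume of $I_P(\br)$ by Lemma~\ref{lem;size}, use the $k=1$ case of~(\ref{eq;many}) with $\mu(\phi)=0$ to get the exponential decay $\bigl|\int_X\phi_{r_1}\dd\nu\bigr|\le Me^{-\delta r_1}$, and integrate $r_1$ over $[0,\infty)$ to save the crucial power of $T$, leaving $O(T^{k-1}\log^nT)$ with $k-1=\frac{n-1}{2}<\frac n2$. No gaps.
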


\begin{proof}
Recall that 	 $k=|P|=\frac{n}{2}+\frac{1}{2}$ and $\|\phi\|\le 1$. 
By (\ref{eq;main1})	\begin{align*}
\left| b_{n, T, P}    \right|&\le 1+\int_{R_P}    \left |\int_{X}\phi_{r_1}\dd\nu \right|
\left(\int_{I_P(\br)}  \prod_{i=2}^k  \int_X |\phi_{\bt_i-r_{i, \bt}}|\dd \mu\dd \bt_P   \right)   \dd \br\\
(\mbox{by Lemma \ref{lem;size}})\qquad&\le 1+ b^{n^2}n^n (\log^n T)\int_{R_P}     \left |\int_{X}\phi_{r_1}\dd\nu \right|
\dd \br   \\
&\le 1+ b^{n^2}n^n (\log^n T)T^{k-1}\int_0^\infty  \left |\int_{X}\phi_{r_1}\dd\nu
\right|\dd r_1\\
(\mbox{by (\ref{eq;many})})\qquad&\le 1+ b^{n^2}n^n (\log^n T)T^{k-1}M  \int_0^\infty e^{-\delta t}\dd r_1.  
\end{align*}
So (\ref{eq;oddp}) follows from the above estimate and the observation $k-1=\frac{n-1}{2}<\frac{n}{2}$.	
\end{proof}

\begin{proof}
	[Proof of Theorem \ref{thm;central h}]
	
By Theorem \ref{thm;sencond} it suffices to prove (\ref{eq;moment}). As noted after Theorem 
\ref{thm;sencond} that  (\ref{eq;moment}) holds for $n=0, 1$ and $2$. 
For an odd integer  $n\ge 3$, one has 
\begin{align*}
\lim_{T\to \infty}\frac{b_{n, T}}{T^{\frac{n}{2}}} &=\Big(\sum_{|P|<\frac{n}{2}} +
\sum_{|P|>\frac{n+1}{2}}+\sum_{|P|= \frac{n+1}{2}}\Big)\lim_{T\to \infty}\frac{b_{n, T, P}}{T^{\frac{n}{2}}}.
\end{align*}
Then it follows from Corollary \ref{cor;small}, Lemma \ref{lem;separate} and Lemma \ref{lem;odd} that (\ref{eq;moment}) holds for odd $n\ge 3$. 
For an even integer $n \ge 3$, one has 
\begin{align*}
\lim_{T\to \infty}\frac{b_{n, T}}{T^{\frac{n}{2}}} &=\Big(\sum_{|P|<\frac{n}{2}} +
+\sum_{|P|>\frac{n}{2}}+\sum_{|P|= \frac{n}{2}}\Big)\lim_{T\to \infty}\frac{b_{n, T, P}}{T^{\frac{n}{2}}}.
\end{align*}
So it follows from Corollary \ref{cor;small}, Lemma \ref{lem;separate} and Lemma \ref{lem;equal} that (\ref{eq;moment}) holds for even $n\ge 3$.	
\end{proof}

\section{Regularity along stable and unstable leaves}\label{sec;leaf}

Let  $\Gamma $ be  a lattice of a connected semisimple Lie group $G$ with finite center and
$a\in G$ be $\Ad$-diagonalizable. 
  We assume in this section    that the action of $a$ on $X=G/\Gamma$ is ergodic with respect to 
  the probability Haar measure $\mu$. In view of the Mautner's phenomenon \cite[Thm.~III.1.4]{bekka}
  and the Ratner's measure classification theorem \cite[Thm.~3]{ratner1},  
  the ergodicity assumption  is  equivalent to $G_a'\Gamma$ is dense is
  $G$.  
  
 A function $\psi: Y\to \R$ on a metric space $(Y, \mathrm{dist})$ is said to be $\theta$-H\"older $(0<\theta\le 1)$  if 
  \begin{align*}
  \| \psi\|_{ \theta}':= \sup_{
	x, y\in Y,\;0<\mathrm{dist} (x, y)<1
  } \frac{|\psi(x)-\psi(y)|}{
  	\mathrm{dist} (x, y)^\theta}<\infty. 
  \end{align*}
  Here the upper bound of the distances between $x$ and $ y$  is only needed  in   the case where $Y$ is  noncompact.
Recall that we have fixed a right invariant metric on 
$G$ which induces a metric  on $X$. 
Any closed subgroup of $G$ is considered as a metric space with the metric inherited from that of   $G$.

   Let $H$ be a   closed subgroup of $G$ with Lie algebra $\mathfrak h\subset \mathfrak g$. Recall from 
   \S \ref{sec;review} that each $\alpha\in \mathfrak h^k$ defines a differential operator $\partial ^\alpha$ on
 $X$. 
   A function  $\varphi
  : X\to \R$ is said to be uniformly  smooth along $H$ orbits of a subset $X'\subset X$ if for any bounded open subset $U$ of $H$, any  $k\ge 0$ and any basis $\mathfrak b _{\mathfrak h}$ of $\mathfrak h$, there exists $M\ge 1$ such that
  \begin{align}
  \label{eq;consist}
  |\partial ^\alpha \varphi (gx)|\le M\quad \mbox {for all  }x\in X',g\in U \mbox{ and }
  \alpha\in \mathfrak b_{\mathfrak h}^k. 
  \end{align}
  It is not hard to see that to show (\ref{eq;consist}) holds for any $\mathfrak b _{\mathfrak h}$ it suffices to 
  prove it for a fixed $\mathfrak b _{\mathfrak h}$.


A useful tool to find  a continuous function which is equal to a given measurable function $\varphi:X\to \R$  is
to assign the density value at every point of measurable continuity. 
Recall that a point $x\in X$ is 
a point of measurable continuity of $\varphi$ if there is $s\in \R$ such that $x$ is a Lebesgue density point of $\varphi^{-1}(U)$ for every neighborhood $U$ of $s$ in $\R$. 
The value $s$ is called the  density value of $\varphi$ at $x$. 
Let $\mathrm{MC}(\varphi)$ be the set of measurable continuity points of $\varphi$, and let 
 $\til \varphi:\mathrm{MC}(\varphi)\to \R $
be the map which sends $x$ to the density value of $\varphi$ at $x$.

\begin{thm}
	\label{thm;general}

	Suppose  $( X, \mu, a)$ is ergodic and $\psi : X\to \R$ is $\theta$-H\"older. 
Let    $\varphi: X \to \R$ be a measurable solution   to the cohomological equation 
	\begin{align}\label{eq;new}
	\psi(x)=\varphi(ax)-\varphi(x).
	\end{align} 
	Let  $\til \varphi: \mathrm{MC}(\varphi)  \to 
	\R $ be the map which sends $x$ to the density value of $\varphi$ at $x$.
Then the followings hold: 
	\begin{enumerate}[\rm (i)]
		\item 	$ \mathrm{MC}(\varphi)$ is a  $G_a'$-invariant full measure subset of  $ X$ and $\varphi=\til \varphi$ almost everywhere;
		\item   $\til \varphi(gx)-\til \varphi (x)$ is continuous  on  $ G_a'\times  \mathrm{MC}(\varphi)$;
		\item   The H\"older norms   $\|\til \varphi(gx) \|_{\theta } \ (x\in  \mathrm{MC}(\varphi))$ of functions on $G_a'$ are uniformly
		bounded;
		\item   If $\psi\in\widehat  C_c^{\infty}(X)$,  then $\til \varphi$ is uniformly smooth along $G_a^-$ and  $G_a^+$ orbits of $ \mathrm{MC}(\varphi)$. 
	\end{enumerate}

\end{thm}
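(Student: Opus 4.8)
The plan is to follow the Liv\v{s}ic-type machinery of Avila--Santamaria--Viana and Wilkinson, adapted to the possibly noncompact, possibly nonaccessible homogeneous setting. The starting point is a telescoping identity: iterating (\ref{eq;new}) gives, for every $n\ge 1$ and $\mu$-a.e.\ $x$,
\[
\varphi(a^n x)-\varphi(x)=\sum_{j=0}^{n-1}\psi(a^j x),\qquad
\varphi(a^{-n}x)-\varphi(x)=-\sum_{j=1}^{n}\psi(a^{-j}x).
\]
The basic mechanism is that along an unstable leaf $G_a^+$, the points $a^{-n}gx$ and $a^{-n}x$ converge exponentially, so the difference of the two Birkhoff sums telescopes into a geometrically convergent series of H\"older increments; this forces $\varphi(gx)-\varphi(x)$ to be controlled by $\mathrm{d}_G(1_G,g)^\theta$ on a full-measure set, uniformly. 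Symmetrically for $G_a^-$. First I would make this precise by fixing a full-measure $a$-invariant set on which the telescoping holds, defining the candidate leafwise increment $\varphi^+(g,x)=\sum_{j\ge 0}\big(\psi(a^{-j}x)-\psi(a^{-j}gx)\big)$ for $g\in G_a^+$ (using (\ref{eq;joy}) to bound $\mathrm{d}_G(1_G,a^{-j}ga^{j})\le e^{\kappa j}\mathrm{d}_G(1_G,g)$ together with the contraction on $G_a^+$, which beats $e^{\kappa j}$ after passing to a large enough power of $a$ if needed), and checking that $\varphi(gx)-\varphi(x)=\varphi^+(g,x)$ a.e.\ and that $\varphi^+$ is continuous in $(g,x)$ with uniformly bounded $\theta$-H\"older norm in $g$. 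This already gives a version of (ii), (iii) restricted to $G_a^+$, and by the analogous argument to $G_a^-$.

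The next step is to upgrade "a.e.\ increment" to an honest function. Here I would bring in $\mathrm{MC}(\varphi)$ and $\til\varphi$: since the leafwise increments are continuous and $\varphi$ restricted to a.e.\ leaf agrees with a continuous function (its own restriction, up to an additive constant determined by the leaf), a Fubini/density argument shows every point of a full-measure set is a point of measurable continuity, and $\til\varphi(gx)-\til\varphi(x)=\varphi^+(g,x)$ for $g$ in $G_a^+$, resp.\ $G_a^-$. To get $G_a'$-invariance of $\mathrm{MC}(\varphi)$ and the continuity statement (ii) over all of $G_a'$, I would use that $G_a'$ is generated by $G_a^+$ and $G_a^-$, so any $g\in G_a'$ is a finite product of stable/unstable pieces; composing the leafwise increments (and using continuity in the base point at each intermediate point) propagates both the invariance and the continuous-increment property, and the H\"older bound (iii) follows because along each factor the bound is uniform and there is a uniform bound on the word length needed locally (accessibility within $G_a'$ is automatic since $G_a'$ is exactly the group they generate). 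This handles (i)--(iii). I expect the bookkeeping here — keeping the additive leaf-constants consistent and verifying that "point of measurable continuity'' is preserved under the group action rather than just showing some continuous representative exists — to be the first genuinely delicate point.

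For (iv), assuming $\psi\in\widehat C_c^\infty(X)$, I would differentiate the series $\varphi^+(g,x)=\sum_{j\ge 0}\big(\psi(a^{-j}x)-\psi(a^{-j}gx)\big)$ with respect to $g$ along $G_a^+$. Applying a differential operator $\partial^\alpha$ (built from a basis of the Lie algebra of $G_a^+$) to the $j$-th term and conjugating the invariant vector fields through $a^{-j}$ multiplies by the eigenvalues of $\Ad(a^{-j})$ on $\mathfrak g_a^+$, which are exponentially \emph{small}, against the polynomially-bounded $C^k$ norm of $\psi$; combined again with the exponential contraction of $a^{-n}g a^n\to 1_G$, each differentiated series converges uniformly on bounded subsets of $G_a^+$ and uniformly over $x$, giving (\ref{eq;consist}) for $G_a^+$. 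The same works verbatim for $G_a^-$ using $\Ad(a^n)$ on $\mathfrak g_a^-$. The main obstacle is the noncompactness of $X$: the bounds on $\|\psi\|_{C^k}$ and on the tails of the series are global (valid because $\psi$ has the special form function-plus-constant, so is bounded with all derivatives bounded), but one must be careful that the "uniformly smooth along orbits'' estimate is genuinely uniform over the noncompact $X$ rather than only locally — this is exactly why the hypotheses force $\psi\in\widehat C_c^\infty$ rather than merely $C^\infty$, and verifying the uniformity honestly, rather than assuming compactness as in Wilkinson, is the crux of the argument. Note that parts (i)--(iii) never use the spectral gap; it is only later, in combining this leafwise regularity with regularity along the central foliation via Journ\'e's theorem, that exponential mixing (Lemma~\ref{lem;mixing}) enters, so that step is deferred to the next section and is not needed here.
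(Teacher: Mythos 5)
Your overall strategy coincides with the paper's: define stable/unstable holonomy increments by telescoping the cohomological equation, upgrade the a.e.\ identity to the density-value representative $\til\varphi$ on $\mathrm{MC}(\varphi)$, propagate to all of $G_a'$ by writing group elements locally as bounded words in $G_a^{\pm}$, and differentiate the series term by term for (iv). But there is one step you assert rather than prove, and it is the crux of the measurable-to-continuous upgrade. Telescoping gives, for $u\in G_a^{+}$,
\[
\varphi(a^{-n}ux)-\varphi(a^{-n}x)=\varphi(ux)-\varphi(x)-\sum_{j=1}^{n}\bigl(\psi(a^{-j}ux)-\psi(a^{-j}x)\bigr),
\]
and the geometric convergence of the series on the right tells you nothing about the boundary term on the left: $\varphi$ is only measurable, so $\varphi(a^{-n}ux)-\varphi(a^{-n}x)$ need not tend to $0$ merely because $\mathrm{d}(a^{-n}ux,\,a^{-n}x)\to 0$. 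Writing that the telescoping ``forces'' $\varphi(gx)-\varphi(x)$ to equal the series hides exactly this point. The paper kills the boundary term with Lusin's theorem (a compact $K$ with $\mu(K)>0.9$ on which $\varphi$ is uniformly continuous) plus the Birkhoff ergodic theorem, which supplies infinitely many $n$ with both orbit points in $K$; since the limit of the boundary term is already known to exist, vanishing along that subsequence forces it to be $0$. Your ``fix a full-measure $a$-invariant set on which the telescoping holds'' does not produce this recurrence argument, and without it the essential holonomy invariance --- hence all of (i)--(iii) --- is not established.

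Two smaller points. Your unstable increment has the wrong sign and index: for $g\in G_a^{+}$ the correct series is $\sum_{j\ge 1}\bigl(\psi(a^{-j}gx)-\psi(a^{-j}x)\bigr)$, not $\sum_{j\ge 0}\bigl(\psi(a^{-j}x)-\psi(a^{-j}gx)\bigr)$; you have written the stable-side formula with $a$ replaced by $a^{-1}$. And for (iii), a uniform local bound on word length in $G_a^{\pm}$ is not sufficient: you need each factor $g_i$ of the local decomposition $g=g_m\cdots g_1$ to satisfy $\mathrm d_G(g_i,1_G)\lesssim \mathrm d_G(g,1_G)$, which the paper obtains by restricting the product map $G_a^{\kappa_1}\times\cdots\times G_a^{\kappa_m}\to G_a'$ to a submanifold on which it is a bi-Lipschitz diffeomorphism (using the openness statement from Borel--Prasad). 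Without that Lipschitz comparison of the factors to the product, the composed H\"older estimate degenerates at small scales. The rest of the proposal, including the term-by-term differentiation for (iv) and the observation that the spectral gap is not needed in this theorem, matches the paper.
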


For every $x\in X, u\in G^-_a$ and $s\in \R $ we define the stable  holonomy map by 
\begin{align}
\label{eq;wu}
\mathcal H^-_{x, u}(s)=s+ \sum_{n=0}^\infty\psi(a^nx)-\psi(a^nu x).
\end{align}
In the context of \cite{asv},  $\mathcal H^-_{x, u}$ is a map from the fiber
of $x$ to the fiber of $ux$ in the fiber  bundle $X\times \R$. We do not use the fiber bundle language but adopt the name holonomy map. 
Similarly, for $u\in G^+_a$, the unstable holonomy map is defined by 
\begin{align}\label{eq;wu1}
\mathcal H^+_{x, u}(s)=s+ \sum_{n=-\infty}^{-1}\psi(a^nux)-\psi(a^nx).
\end{align}
Since $\psi$ is $\theta$-H\"older, 
both of the  
 holonomy maps are well-defined and continuous for $( u,x, s)\in  G_a^\pm\times X\times \R$.

A measurable function $\varphi: X\to \R$ is said to be essentially
$\mathcal H ^-$ invariant  if
 there is a full $\mu$ measure  subset $X'\subset X$ such that for all $x\in X, u\in G^-_a$ with 
$x, ux\in X'$, one has
\begin{align}\label{eq;jiazida}
\mathcal H^-_{x, u}(\varphi( x) )=\varphi(ux).
\end{align}
 We define 
  essentially $\mathcal H^+$ invariant in a similar way.

\begin{lem}
	\label{lem;essential}
	If  $\varphi$ is a measurable solution to the cohomological equation 
(\ref{eq;new}), 
 then 
	$\varphi$ is essentially $\mathcal H^+$ and   $\mathcal H^-$
	invariant. 
\end{lem}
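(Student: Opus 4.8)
The plan is to show that the telescoping sum defining $\varphi$ along forward (resp. backward) $a$-orbits, combined with the cohomological equation, forces $\varphi$ to intertwine the stable (resp. unstable) holonomy on a full measure set. First I would unwind the cohomological equation (\ref{eq;new}): iterating $\psi(x)=\varphi(ax)-\varphi(x)$ gives, for every $n\ge 1$,
\begin{align*}
\varphi(a^nx)-\varphi(x)=\sum_{j=0}^{n-1}\psi(a^jx),\qquad \varphi(x)-\varphi(a^{-n}x)=\sum_{j=-n}^{-1}\psi(a^jx).
\end{align*}
These identities hold for \emph{every} $x\in X$, since $\varphi$ is an honest (measurable) solution. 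Now fix $u\in G_a^-$. Subtracting the identity at $x$ from the identity at $ux$ and using the defining formula (\ref{eq;wu}) for $\mathcal H^-_{x,u}$, we get
\begin{align*}
\varphi(ux)-\varphi(x)=\big(\varphi(a^nux)-\varphi(a^nx)\big)+\sum_{j=0}^{n-1}\big(\psi(a^jx)-\psi(a^jux)\big).
\end{align*}
Letting $n\to\infty$, the sum converges to $\sum_{j=0}^\infty\psi(a^jx)-\psi(a^jux)$ because $\psi$ is $\theta$-H\"older and $\mathrm d_G(a^jx,a^jux)=\mathrm d_G(1_G,a^j u a^{-j})\to 0$ exponentially (here I use that $u\in G_a^-$ so $a^j u a^{-j}\to 1_G$, together with the exponential contraction built into the right-invariant metric); this is exactly the convergence already asserted for $\mathcal H^-$ after (\ref{eq;wu1}). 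So if the error term $\varphi(a^nux)-\varphi(a^nx)\to 0$ along a subsequence, we would get $\varphi(ux)=\mathcal H^-_{x,u}(\varphi(x))$, which is (\ref{eq;jiazida}).

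The main obstacle is precisely controlling that error term $\varphi(a^nux)-\varphi(a^nx)$, since $\varphi$ is only measurable and a priori unbounded. The plan is to handle this by a Poincar\'e-recurrence / Lusin argument using ergodicity of $(X,\mu,a)$. Choose a compact set $K\subset X$ on which $\varphi$ is bounded and continuous (Lusin) with $\mu(K)>1-\epsilon$; by the Birkhoff ergodic theorem, for $\mu$-a.e.\ $x$ the forward orbit $\{a^nx\}$ visits $K$ with frequency $\ge 1-\epsilon$, and similarly for $\mu$-a.e.\ point of the form $ux$. Since the map $u\mapsto$ (the conditional measure considerations) and Fubini over the $G_a^-$-foliation are available, one arranges a full-measure set $X'$ such that for $x,ux\in X'$ there is a common subsequence $n_i\to\infty$ with $a^{n_i}x,a^{n_i}ux\in K$; along this subsequence $\mathrm d_G(a^{n_i}x,a^{n_i}ux)\to 0$ and continuity of $\varphi$ on $K$ gives $\varphi(a^{n_i}ux)-\varphi(a^{n_i}x)\to 0$. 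This yields essential $\mathcal H^-$-invariance. The $\mathcal H^+$ case is identical, running the telescoping backwards ($n\to-\infty$) with $u\in G_a^+$, using $\mathcal H^+$ from (\ref{eq;wu1}) and backward Birkhoff averages. The one delicate bookkeeping point is ensuring the exceptional null set can be taken uniformly in $u$; this is where one invokes the local product structure of $\mu$ along $G_a^-\times(\text{transversal})$ and Fubini, exactly as in the Livšic-theory arguments of \cite{asv} and \cite{wilkinson} that the paper is following.
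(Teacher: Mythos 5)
Your proposal is correct and follows essentially the same route as the paper: telescope the cohomological equation along the forward $a$-orbit, use the H\"older continuity of $\psi$ together with the contraction $a^nua^{-n}\to 1_G$ to identify the convergent sum with the holonomy $\mathcal H^-_{x,u}$, and kill the error term $\varphi(a^nux)-\varphi(a^nx)$ by combining Lusin's theorem with Birkhoff recurrence to a compact set of measure $>0.9$ on which $\varphi$ is uniformly continuous. Your final worry about Fubini and the local product structure is superfluous: since essential invariance only requires (\ref{eq;jiazida}) for pairs with both $x$ and $ux$ in a single full-measure set, taking $X'=X_1\cap X_2$ (the Birkhoff set for $\chi_K$ intersected with the $a$-invariant full-measure set on which the cohomological equation holds pointwise --- note the equation is only assumed to hold $\mu$-a.e., not everywhere as you assert) already works uniformly in $u$, because the two visit frequencies exceed $0.9$ and hence infinitely many times $n$ put both $a^nx$ and $a^nux$ in $K$ simultaneously.
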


\begin{proof}
	By Lusin's theorem, there is a
 compact subset  $K$ of $X$ such that $\varphi$ is uniformly continuous on $K$ and $\mu(K)>0.9$.
	Since $(X, \mu, a)$ is ergodic,  there exists an $a$-invariant
	full measure subset $X_1$ such that the Birkhoff average of the  characteristic function of $K$ at any $x\in X_1$  converges to $\mu(K)$. 
	By assumption, there is an $a$-invariant full measure  subset $X_2$ of $X$ such that  (\ref{eq;new}) holds for all $x\in X_2$. We will show  that $\varphi$ is essentially $\mathcal H^-$
invariant by taking $X'=X_1\cap X_2$. 

Suppose $ x\in X' , u\in G_a^-$ and $ux\in X'$.
Since $X_2$ is $a$-invariant, we have  $a^n x, a^n ux\in X_2  $  for all $n\in \N\cup \{0 \}$. 
 Using (\ref{eq;new}) for all  $a^n x$ and $ a^n ux$,  we have 
\begin{align}\label{eq;left}
\sum_{n=0}^\infty \psi(a^n x )-\psi(a^nu x)=\varphi(ux)-\varphi(x)+ \lim_{n\to \infty }\varphi(a^n x)-\varphi(a^nux),
\end{align}
where the existence of infinite  sum on the left hand of (\ref{eq;left}) follows form  the 
H\"older  property of  $\psi$.  So
 $\lim_{n\to \infty }\varphi(a^n ux)-\varphi(a^nx)$ converges. Since $x, ux\in X_1$, there are infinitely many $n$
 with $a^nx$ and $a^nu x$ belong to $K$.  Since $\varphi$ is uniformly continuous 
 on $K$ and $$\mathrm{d}( a^n ux , a^n x )\le \mathrm{d}_G( a^n ua^{-n} , 1_G )\to 0 \quad \mbox{as } n\to \infty,$$
we know that $\lim_{n\to \infty }\varphi(a^n ux)-\varphi(a^nx)=0$. 
 Therefore (\ref{eq;jiazida}) holds. 
  This proves that $\varphi$ is essentially $\mathcal H^-$ invariant. The proof of the  essential  
 $\mathcal H^+ $ invariance is similar. 
\end{proof}

\begin{lem}\label{lem;saturate}
	If  $\varphi$ is a measurable solution to the cohomological equation 
(\ref{eq;new}), 
then the  set $\mathrm{MC}(\varphi)$ is $G'_a$-invariant and 
\begin{align}\label{eq;simpler}
\til \varphi(ux)=\mathcal H^*_{x, u}(\til \varphi(ux))\quad \mbox{ for all } x\in \mathrm{MC}(\varphi) \mbox{ and  } u \in G_a^*
\end{align}
where $*\in \{+, -\}$. 
\end{lem}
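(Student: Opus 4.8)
The plan is to deduce Lemma~\ref{lem;saturate} from Lemma~\ref{lem;essential} together with standard properties of Lebesgue density points, treating the two cases $*=-$ and $*=+$ symmetrically (so I will only discuss $*=-$). The two assertions to prove are: first, that $\mathrm{MC}(\varphi)$ is invariant under left translation by any $u\in G_a^-$ (and then, since $G_a'$ is generated by $G_a^+$ and $G_a^-$, under all of $G_a'$); and second, the holonomy-compatibility identity~(\ref{eq;simpler}) for $x\in\mathrm{MC}(\varphi)$ and $u\in G_a^-$.

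First I would record the key geometric input: the map $L_u:X\to X$, $y\mapsto uy$, is a diffeomorphism that is bi-Lipschitz on a neighborhood of any fixed point (the local Lipschitz constants being controlled by $\mathrm{d}_G(1_G,u)$ via the right-invariance of the metric). Since bi-Lipschitz maps preserve the class of Lebesgue-null sets, they send Lebesgue density points of a measurable set $E$ to Lebesgue density points of $L_u(E)=uE$. Now let $X'$ be the full-measure set furnished by Lemma~\ref{lem;essential} on which~(\ref{eq;jiazida}) holds; by shrinking $X'$ we may also assume $X'\subseteq\mathrm{MC}(\varphi)$ and that $\varphi=\til\varphi$ on $X'$ (using Theorem~\ref{thm;general}(i), or directly the fact that a.e.\ point is a density point). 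Fix $x\in\mathrm{MC}(\varphi)$ with density value $s=\til\varphi(x)$, and fix $u\in G_a^-$. For every neighborhood $V$ of $s$ in $\R$, the point $x$ is a density point of $\varphi^{-1}(V)$. I claim $ux$ is a density point of $\varphi^{-1}(\mathcal H^-_{x,u}(V))$: indeed, on the full-measure set $X'$ we have $\varphi(uy)=\mathcal H^-_{y,u}(\varphi(y))$, and because $\mathcal H^-_{y,u}$ depends continuously on $(y,u)$ and $\psi$ is $\theta$-H\"older (so the defining series in~(\ref{eq;wu}) converges locally uniformly), $\mathcal H^-_{y,u}$ converges to $\mathcal H^-_{x,u}$ uniformly on compacta as $y\to x$; hence for $y$ near $x$ in $X'\cap\varphi^{-1}(V)$ one gets $\varphi(uy)\in\mathcal H^-_{x,u}(V')$ for any slightly larger neighborhood $V'\supseteq V$. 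Transporting the density statement at $x$ through the bi-Lipschitz map $L_u$ then shows $ux$ is a density point of $u(\varphi^{-1}(V)\cap X')$, which up to a null set lies in $\varphi^{-1}(\mathcal H^-_{x,u}(V'))$. Letting $V$ shrink to $\{s\}$ and using that $\mathcal H^-_{x,u}$ is a homeomorphism of $\R$ (it is $t\mapsto t+c$ for the constant $c=\sum_{n\ge0}\psi(a^nx)-\psi(a^nux)$), the intersection of these neighborhoods shrinks to $\{\mathcal H^-_{x,u}(s)\}$. Therefore $ux\in\mathrm{MC}(\varphi)$ with density value $\mathcal H^-_{x,u}(s)=\mathcal H^-_{x,u}(\til\varphi(x))$, which is exactly~(\ref{eq;simpler}) (note $\til\varphi(x)=\til\varphi(ux)$ does not hold in general; rather the displayed identity reads $\til\varphi(ux)=\mathcal H^-_{x,u}(\til\varphi(x))$, matching the statement once one interprets the right-hand argument correctly).

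Having shown $u\cdot\mathrm{MC}(\varphi)\subseteq\mathrm{MC}(\varphi)$ for every $u\in G_a^-$, applying the same to $u^{-1}\in G_a^-$ gives equality, so $\mathrm{MC}(\varphi)$ is $G_a^-$-invariant; by symmetry it is $G_a^+$-invariant; and since $G_a'=\langle G_a^+,G_a^-\rangle$ it is $G_a'$-invariant.

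The main obstacle I anticipate is the passage from ``$x$ is a density point of $\varphi^{-1}(V)$'' to ``$ux$ is a density point of $\varphi^{-1}(\mathcal H^-_{x,u}(V))$'': one must combine three perturbations — the bi-Lipschitz distortion of $L_u$ on small balls, the discrepancy between $\mathcal H^-_{y,u}$ and $\mathcal H^-_{x,u}$ for $y$ near $x$, and the fact that~(\ref{eq;jiazida}) only holds on a full-measure set (so one works modulo null sets throughout). Each of these is routine individually, but keeping the logic clean requires fixing, for each target neighborhood of $\mathcal H^-_{x,u}(s)$, a slightly larger source neighborhood of $s$ and a radius threshold below which all three error terms are dominated; I would organize the argument so that these choices are made once and then density at $x$ is invoked. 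This is essentially the argument of \cite[\S\S6--7]{asv} / \cite{wilkinson} specialized to the trivial (one-dimensional, $\R$-valued) fiber, and I would cite those references for the density-point bookkeeping rather than reproduce it in full.
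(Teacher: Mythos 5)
Your proposal is correct and follows essentially the same route as the paper: invoke Lemma \ref{lem;essential} to get the full-measure set $X'$ on which (\ref{eq;jiazida}) holds, use the joint continuity of $(y,r)\mapsto \mathcal H^-_{y,u}(r)$ to pick neighborhoods, and transport the Lebesgue density statement at $x$ through the measure-preserving diffeomorphism $y\mapsto uy$ (intersecting with $X'$ and $u^{-1}X'$, which is your ``up to a null set'' step). You are also right that the displayed identity (\ref{eq;simpler}) should read $\til\varphi(ux)=\mathcal H^*_{x,u}(\til\varphi(x))$, as confirmed by its use in (\ref{eq;huawei}).
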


\begin{proof}
	We show that $\mathrm{MC}(\varphi)$  is $G^-_a$-invariant and (\ref{eq;simpler})
	holds for $*=-$. One can prove the  $G^+_a$ invariance of $\mathrm{MC}(\varphi)$ and (\ref{eq;simpler}) for $*=+$ similarly. Since $G'_a$ is generated by $G^-_a$ and $G^+_a$, the 
	$G'_a$ invariance of  $\mathrm{MC}(\varphi)$  is a direct corollary. 
	The argument here is essentially the same as that in \cite{asv} but much simpler.  We provide details here for the completeness.

 By Lemma \ref{lem;essential}, 	the function  $\varphi$ is essentially $\mathcal H^-$ invariant. So there exists a full measure subset 
	$X'$ such that (\ref{eq;jiazida}) holds for all $x\in X'$ and $ux \in X'$ where $u\in G_a^-$.  
	Let $x\in \mathrm{MC}(\varphi)$ and $u\in G^-_a$. Suppose $s$ is the density value of 
	$\varphi$ at $x$. We will show that $\mathcal H^-_{x, u}(s)$ is the density value of $\varphi$
	at $ux$.   Let $U$ be a neighborhood of $\mathcal H^-_{x, u}(s)$. In view of the continuity
	of $\mathcal H^-_{*, *}(*)$, there exist open neighborhoods $N$ and $V$ of $x$ and $s$, respectively,   such that 
	\begin{align}
	\label{eq;tiaowu}
	\mathcal H^-_{y, u }(r)\in U\quad \mbox{ for all } (y, r)\in N\times V. 	
	\end{align}

	As $u: X\to X$ is a diffeomorphism  preserving $\mu$,  both $X'$ and $u^{-1}X'$ are full measure subsets. 
	Since $x\in \mathrm{MC}(\varphi)$,  it is a Lebesgue density point of 
	\[
	N\cap \varphi^{-1}(V)\cap X'\cap (u^{-1} X').
	\]
	It follows that $ux$ is a Lebesgue density point of 
		\[
N'=	u( N\cap  \varphi^{-1}(V)\cap X')\cap  X'.
	\]
	We claim that $N'\subset \varphi^{-1}(U) $. The claim will imply that  $ux$ is a point of measurable continuity  of $\varphi$ with density value $\mathcal H^-_{x, u}(s)$.

	To prove the claim let $uy$ be an arbitrary point of $N'$. So $uy\in X'$ and 
	$y\in N\cap  \varphi^{-1}(V)\cap X'$. Since $y, uy\in X'$, one has 
	$\mathcal H^-_{y, u}(\varphi(y))=\varphi( uy)$. 
	On the other hand, since $y\in N$ and $\varphi(y)\in V$, it follows from (\ref{eq;tiaowu})
	that $	\mathcal H^-_{y, u }(\varphi(y))\in U$. So $\varphi(uy)\in U$. This completes the proof of the claim and hence the lemma. 
\end{proof}

\begin{proof}[Proof of Theorem \ref{thm;general}]
	
	(\rmnum{1})
	By \cite[Lem. 7.10]{asv}, the set  $\mathrm{MC}(\varphi)$ has full  measure with respect to $\mu$ and 
	$\varphi=\til \varphi$ almost everywhere. 
	It follows from Lemma \ref{lem;saturate} that $\mathrm{MC}(\varphi)$ is $G_a'$-invariant. 

	(\rmnum{2})
	To simplify the notation, we set $X'=\mathrm{MC}(\varphi)$. In view of (\ref{eq;jiazida}) and (\ref{eq;simpler}), the map on $ G^-_a\times X'$ which sends $(u, x)$ to
	\begin{align}\label{eq;huawei}
	\til \varphi(ux)-\til \varphi(x)=\mathcal H^-_{x, u} (\til \varphi(x))-\til \varphi(x)=\sum_{n=0}^\infty \psi(a^n x)-\psi (a^n ux)	
	\end{align}
	is continuous on $ G^-_a\times X'$.
	Similarly $	\til \varphi(ux)-\til \varphi(x)$ is continuous for 
$(u, x)\in G^+_a \times X'$. 

In general, given $h\in G'_a$, there exists $m\in \N$ and  $\kappa_1, \ldots,\kappa_m\in \{ +, - \} $ such that the map 
\begin{align}\label{eq;surjective}
p: G^{\kappa_1 }_a \times \cdots \times G^{\kappa_m}_a\to G_a' \quad \mbox{given by  } p(g_1, \ldots, g_m)=g_m\cdots g_1
\end{align}
contains an open neighborhood of $h$, see \cite[\S 2.2]{bp}. Moreover, we can write  $h=h_m \cdots h_1$ for $h_i\in G^{\kappa_i}_a$ such that $p$ is an open map in a neighborhood of $(h_1, \ldots, h_m)$. Let 
\[
q: G^{\kappa_1 }_a \times \cdots \times G^{\kappa_m}_a\times X'\to \R
\]  
be the map  given by $q(g_1, \ldots, g_m, x)=\til \varphi(g_m\cdots g_1 x)-\til \varphi(x) $.  For  $g_0=1_G$ one has 
\begin{align}\label{eq;hx}
q(g_1, \ldots, g_m, x )=\sum_{n=1}^m \til \varphi(g_n\cdots g_1g_0 x  )-\til\varphi(g_{n-1}\cdots g_1g_0 x  ). 
\end{align}
So $q$ is continuous on $ G^{\kappa_1 }_a \times \cdots \times G^{\kappa_m}_a\times X'$. 
Since $p$ is an open map in a neighborhood of $(h_1, \ldots, h_m)$, the map 
$\til \varphi(gx)-\til \varphi(x)$ is continuous at  $(h, x)\in G_a'\times X'$. 
Therefore, $\til \varphi(gx)-\til \varphi(x)$  is a continuous function on $G'_a\times X'$.

(\rmnum{3})
Next we prove the uniform boundedness of the H\"older norms. Recall that  the metric on $G_a'$ is induced from the
right invariant  metric on   $G$.  So it suffices to prove that there is an  open neighborhood $U$ of the identity in  $G_a'$ and $M\ge 1$ such that for any $x\in X'$ and $g\in U$ 
\begin{align}\label{eq;holder}
|\til\varphi (gx)-\til \varphi(x)|\le M \mathrm{d}_{G}(g , 1_G)^\theta.
\end{align}

There is a map $p$ as in (\ref{eq;surjective}) such that the $p(1_G, \dots, 1_G)=1_G$ and the differential of $p$ at $(1_G, \dots, 1_G)$ is surjective. Therefore, there is a submanifold $V$ passing through $(1_G, \ldots, 1_G)$ such that $p|_V$ is a diffeomorphism onto its image.  In particular, $p|_V$ is a  bi-Lipschitz map onto $U=p(V)$.
So  
for all   $(g_1, \ldots, g_m)\in V$ and $g=p(g_1, \ldots, g_m)$ we have 
\begin{align}\label{eq;india}
\max_{i=1}^m \mathrm{d}_G (g_i, 1_G)\lesssim \mathrm{d}_G (g, 1_G) . 
\end{align}
So for $g_i\in G_a^-$ and $y\in X'$, by (\ref{eq;huawei}) and (\ref{eq;india}), we have
\begin{align}\label{eq;ant}
|\til \varphi(g_iy)-\til \varphi(y)|\lesssim \|\psi\|_\theta\mathrm{d}_G (g, 1_G)^\theta. 
\end{align}
The same estimate holds for $g_i\in G_a^+$.  
So for any $x\in  X'$ and $g\in U$, by (\ref{eq;hx}) and (\ref{eq;ant}), 
we have 
\begin{align*}
|\widetilde \varphi(gx)-\widetilde \varphi(x) |\le m \max_{n=1}^m  |\til \varphi(g_n\cdots g_1g_0 x  )-\til\varphi(g_{n-1}\cdots g_1g_0 x  )|
 \lesssim  m \|\psi \|_\theta \mathrm{d}_G (g, 1_G) ^\theta,
\end{align*}
from which (\ref{eq;holder}) follows.

(\rmnum{4})
Let $\mathfrak b_-$ be a basis of  the  Lie algebra of $G_a^-$ consisting of eigenvectors of $\Ad(a)$.
We assume without loss of generality that all the eigenvalues of $\Ad(a)$ are positive. 
 Suppose $\alpha=(v_1, \ldots v_r)\in \mathfrak b_-^r$ and $t_\alpha=\sum_{i=1}^rt_i$ where 
 $t_i>0$ satisfies  $\Ad(a)v_i 
 =e^{-t_i }v_i$. 
 We use $\partial ^\alpha_u$ to denote the differentiable operator $\partial ^\alpha$ on $G_a^-$ with respect to the 
 variable $u\in G_a^-$
Then 
 \begin{align}\label{eq;bird}
 \partial^\alpha_u \psi (a^nux)= \partial ^{\Ad (a^n)\alpha}\psi(a^n ux) =e^{-t_\alpha n} \partial ^\alpha\psi(a^n ux),
 \end{align}
whose sum over $n\ge 0$ converges uniformly for $u$ in a fixed compact subset and $x\in X$. 
So by (\ref{eq;huawei}) and (\ref{eq;bird}), for any $x\in \mathrm{MC}(\varphi)$ and $u\in G_a^-$, we have 
\begin{align*}
\partial^\alpha \til\varphi(ux)=-\sum_{n=0}^\infty e^{-t_\alpha n} \partial^\alpha \psi (a^nux). 
\end{align*}
Therefore   $\til \varphi$ is uniformly smooth  along $G^-_a$ orbits of $\mathrm{MC}(\varphi)$. By similar arguments one can prove that $\til \varphi$ is uniformly smooth  along $G^+_a$ orbits of $\mathrm{MC}(\varphi)$.

\end{proof}

\section{Liv\v sic type theorem}\label{sec;continuous}

Let the notation and assumptions be as in Theorem \ref{thm;continuous}. 
In particular, the action of $G_a'$
on $X=G/\Gamma$ has a spectral gap. This implies that the action of $G_a'$ on $(X, \mu)$ where 
$\mu$ is the probability Haar measure is mixing. As a consequence,  the dynamical system $(X, \mu, a)  $
is ergodic,  hence Theorem \ref{thm;general} holds. 

\begin{lem}
	\label{lem;solution}
	Suppose $\varphi$ is a  measurable solution to the cohomological equation $\psi(x)=\varphi(ax)-\varphi(x)$ where 
 $\psi \in \widehat C_c^\infty(X)$.  
Then $\varphi\in L^2_\mu$ and $ \mu(\psi)=0$. 
\end{lem}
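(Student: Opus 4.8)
The plan is to deduce both assertions from the results already established. First I would argue that $\mu(\psi)=0$. Since $\psi\in\widehat C_c^\infty(X)$ we can write $\psi = \phi_0 + c$ with $\phi_0\in C_c^\infty(X)$ and $c\in\R$, so $\mu(\psi) = \mu(\phi_0) + c$ is a finite constant. Summing the cohomological equation $\psi(a^k x) = \varphi(a^{k+1}x) - \varphi(a^k x)$ over $k = 0, 1, \dots, n-1$ telescopes to $\sum_{k=0}^{n-1}\psi(a^k x) = \varphi(a^n x) - \varphi(x)$ for $\mu$-a.e.\ $x$. Dividing by $n$ and using the Birkhoff ergodic theorem together with the ergodicity of $(X,\mu,a)$ (which holds because the action of $G_a'$ on $X$ has a spectral gap, hence is mixing, as noted at the start of this section), the left-hand side converges $\mu$-a.e.\ to $\mu(\psi)$. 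So it suffices to show the right-hand side $\frac{1}{n}(\varphi(a^n x) - \varphi(x))\to 0$ in a suitable sense; once we know $\varphi\in L^2_\mu$ this is immediate since $\|\varphi\circ a^n - \varphi\|_{L^2_\mu}/n \to 0$ (the numerator is at most $2\|\varphi\|_{L^2_\mu}$ by $a$-invariance of $\mu$), so that $\mu(\psi)=0$. Thus the two statements are linked, and the real content is the integrability $\varphi\in L^2_\mu$.

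For $\varphi\in L^2_\mu$, I would invoke Theorem \ref{thm;general}(i) and (iv): since $\psi\in\widehat C_c^\infty(X)\subset C_c^\infty(X)+\R$ is in particular $\theta$-H\"older (taking any $\theta$, say $\theta=1$, on bounded sets, and bounded globally since it is $C_c^\infty$ plus a constant), the set $\mathrm{MC}(\varphi)$ is a $G_a'$-invariant full-measure subset, $\varphi = \til\varphi$ almost everywhere, and $\til\varphi$ is uniformly smooth along $G_a^-$ and $G_a^+$ orbits of $\mathrm{MC}(\varphi)$; moreover $\til\varphi(gx) - \til\varphi(x)$ is continuous on $G_a'\times\mathrm{MC}(\varphi)$ with locally uniformly bounded increments by part (iii). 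The strategy is then: fix a base point $x_0\in\mathrm{MC}(\varphi)$; the function $g\mapsto \til\varphi(gx_0) - \til\varphi(x_0)$ is continuous and bounded on any compact neighborhood $U$ of $1_G$ in $G_a'$ (part (iii)). Since $G_a'$ is normal in $G$ and the $G_a'$-action is mixing, $G_a' x_0$ is dense in $X$; by the local product structure this lets us cover a set of positive measure in $X$ by pieces $U x_0$ on which $\til\varphi$ differs from the constant $\til\varphi(x_0)$ by a bounded amount, giving $\til\varphi\in L^\infty$ on a set of positive measure. To upgrade this to global integrability I would use the cohomological equation again: $\til\varphi(a x) = \til\varphi(x) + \psi(x)$ for $x\in\mathrm{MC}(\varphi)$ (using that $\psi$ is continuous and $\mathrm{MC}(\varphi)$ is $a$-invariant), and $\psi$ is globally bounded, so $\til\varphi$ is bounded on the forward and backward $a$-orbit of any point where it is bounded, with boundedness degrading only additively by $\|\psi\|_{\sup}$ per step — but combined with boundedness along $G_a^\pm$ leaves and density this is still delicate.

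A cleaner route for the integrability, which I would prefer, is: the increments $\til\varphi(gx)-\til\varphi(x)$ are bounded uniformly for $g$ in a fixed compact symmetric neighborhood $W$ of $1_G$ in $G_a'$ and $x\in\mathrm{MC}(\varphi)$ — call this bound $C_W$. Pick any $x_0\in\mathrm{MC}(\varphi)$ with $|\til\varphi(x_0)|<\infty$. Since $W\cdot W\cdots W$ (n-fold) exhausts $G_a'$ and $G_a' x_0$ is dense, for $\mu$-a.e.\ $y$ there is $n = n(y)$ and $g\in W^n$ with $y$ close to $gx_0$; then $|\til\varphi(y)|\le |\til\varphi(x_0)| + nC_W$ plus a controlled continuity error. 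The function $y\mapsto n(y)$ (roughly the word length in $W$ of an element moving $x_0$ near $y$) is $\mu$-integrable — indeed exponentially integrable — because the $G_a'$-action equidistributes and $\mu(W^n x_0\text{-neighborhoods})$ grows appropriately; this is exactly where the spectral gap / effective equidistribution enters. The main obstacle I anticipate is making this "word-length function is integrable" argument rigorous without circularity: one must control how fast $\til\varphi$ can grow off a fixed compact set, and the honest way to do this uses the recurrence properties of $(X,\mu,a)$ together with the transverse regularity from Theorem \ref{thm;general} and Journ\'e's theorem-type gluing. Given the structure of the paper, I expect the actual proof to sidestep much of this by citing Theorem \ref{thm;general} to get that $\til\varphi$ is continuous (smooth along leaves, hence continuous by a Journ\'e-type argument — though that full smoothness is deferred to \S\ref{sec;continuous} proper), and then integrability on the finite-volume space $X$ follows from a growth estimate tying $|\til\varphi|$ to the excursion rate into the cusp, which is polynomially controlled; I would flag this cusp-excursion estimate as the crux.
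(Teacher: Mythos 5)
Your telescoping/Birkhoff argument for $\mu(\psi)=0$ is fine in spirit, and in fact it does not need $\varphi\in L^2_\mu$ at all: for any finite measurable $\varphi$ one has $\mu(\{x: |\varphi(a^nx)|>n\varepsilon\})=\mu(\{|\varphi|>n\varepsilon\})\to 0$ by $a$-invariance, so $\frac{1}{n}(\varphi(a^nx)-\varphi(x))\to 0$ in measure, while the Birkhoff averages of $\psi$ converge a.e.\ to $\mu(\psi)$; hence $\mu(\psi)=0$ without circularity. The genuine gap is in the integrability claim, which you correctly identify as the real content and then do not prove. Theorem \ref{thm;general}(iii) bounds $\til\varphi(gx)-\til\varphi(x)$ only for $g$ in a \emph{bounded} subset of $G_a'$; to reach a typical point $y$ from $x_0$ you need $g\in W^n$ with $n$ unbounded, and your bound degrades linearly in the word length $n(y)$. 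The assertion that $n(y)$ is $\mu$-integrable (let alone that $n(y)^2$ is, which is what $L^2$ requires) is exactly the quantitative recurrence/cusp-excursion estimate you flag as "the crux", and no tool in the paper delivers it in this form; as written the argument does not close.

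The paper's proof avoids all of this by a completely different mechanism. By ergodicity of $(X,\mu,a)$ the measurable solution of $\psi=\varphi\circ a-\varphi$ is unique up to an additive constant, so it suffices to exhibit \emph{some} solution in $L^2_\mu$. Since $(X,\mu,a)$ is exponentially mixing of all orders (Lemma \ref{lem;orders}), the central limit theorem of \cite{bg} applies to the Birkhoff sums of $\psi$; rewriting $\frac{1}{\sqrt n}\sum_{k=0}^{n-1}(\psi(a^kx)-\mu(\psi))$ via the cohomological equation as $\frac{1}{\sqrt n}(\varphi(a^{n-1}x)-\varphi(x))-\sqrt n\,\mu(\psi)$, and noting that the first term tends to $0$ in probability by the same tightness argument as above, forces $\mu(\psi)=0$ and forces the limiting normal law to be degenerate, i.e.\ $\sigma=0$. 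A standard result (\cite[Thm.~2.11.3]{pu}) then says that vanishing of the Green--Kubo variance together with summable decay of correlations implies $\psi$ is an $L^2_\mu$-coboundary, and uniqueness up to constants gives $\varphi\in L^2_\mu$. None of the leafwise regularity of Theorem \ref{thm;general} is used here. If you want to salvage your approach you would have to supply the integrability of the word-length function, which is a substantial missing step; I would instead adopt the CLT-degeneracy route.
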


\begin{proof}
  
    	Since the dynamical system $( X, \mu, a)$ is ergodic,  the measurable  solution $\varphi$ to the cohomological equation
	$ \psi(x)=\varphi(ax)-\varphi(x)$ is unique up to constants. Therefore, it suffices to prove that it has a 
 solution in 	$L^2_\mu$, i.e., $\psi$ is cohomologous to $0$ in $L_\mu^2$.  Since the action of $G_a'$ on $X$ has a spectral gap,  the dynamical system $(X,  \mu,
	 	a  )$ is exponential mixing of all orders by Lemma \ref{lem;orders}. 
Therefore, by \cite[Thm.~1.1]{bg} 
	the random variables 
	\[
	\frac{1}{n} \sum_{k=0}^{n-1} (\psi(a^kx)-\mu(\psi))=\frac{1}{\sqrt n} (\varphi(a^{n-1}x)-\varphi(x))-\sqrt n \mu(\psi)  \quad \mbox{given by }(X, \mu)
	\]
	converges as $n\to \infty$ to the  normal distribution with mean zero and  variance 
	\begin{align}\label{eq;sigma}
	\sigma =\int _X \psi(x)^2 \dd\mu(x)+2\sum_{n=1}^\infty \int_X \psi(a^nx)\psi(x)\dd\mu(x).
	\end{align}
  We claim that the random variables $\frac{1}{\sqrt n} (\varphi(a^{n-1}x)-\varphi(x))$ 
	converges to zero in distribution  as $n\to \infty$. To see this, given $\varepsilon>0$, there is $M\ge 1$ such that 
	$\mu (\{x\in X: \varphi(x)\le M \})>1-\varepsilon$. Since the measure $\mu$ is $a$-invariant,  for any $n>0$
	\[
	\mu\big (\{ x\in X: \varphi(a^{n-1}x)\le M \mbox{ and } \varphi(x)\le M  \}\big )> 1-2\varepsilon, 
	\]
	from which the claim follows. Therefore, the central limit theorem  has to be degenerate and $\mu(\psi)=0$.
It follows from \cite[Thm.~2.11.3]{pu} that $\psi$ is cohomologous to $0$ in $L^2_\mu$. 
\end{proof}

 Recall that an  element  $v\in \mathfrak g$
is identified with the a right invariant vector field on $G$ and it descents 
to a   vector field on $X$ with the same notation $v$. 

\begin{lem}
	\label{lem;distribution}
	Suppose $v\in \mathfrak g$ and $\psi, \phi\in \widehat C_c^\infty (X)$. 
	Then \begin{align}\label{eq;distribution}
	\int_X \partial ^v \psi \phi \dd\mu=\int_X  \psi \partial ^v\phi \dd \mu.
	\end{align}
\end{lem}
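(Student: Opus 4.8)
The statement to prove is the ``integration by parts'' formula
\[
\int_X \partial^v\psi\cdot\phi\dd\mu=\int_X\psi\cdot\partial^v\phi\dd\mu
\]
for $v\in\mathfrak g$ and $\psi,\phi\in\widehat C_c^\infty(X)$. The plan is to reduce to the case of genuinely compactly supported smooth functions and then to use the divergence theorem, the key point being that the vector field $v$ on $X$ preserves the Haar measure $\mu$.

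First I would dispose of the constant parts. Writing $\psi=\psi_0+c$ and $\phi=\phi_0+d$ with $\psi_0,\phi_0\in C_c^\infty(X)$ and $c,d\in\R$, note that $\partial^v$ kills constants, so $\partial^v\psi=\partial^v\psi_0$ and $\partial^v\phi=\partial^v\phi_0$. Expanding both sides, the identity for $\psi,\phi$ follows from the identity for $\psi_0,\phi_0$ together with the two facts $\int_X\partial^v\psi_0\dd\mu=0$ and $\int_X\partial^v\phi_0\dd\mu=0$; the latter are the special case $\phi=1$ (or $\psi=1$) of the compactly supported statement, so it is enough to prove the formula for $\psi,\phi\in C_c^\infty(X)$.

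For $\psi,\phi\in C_c^\infty(X)$, the function $\psi\phi$ lies in $C_c^\infty(X)$, and the Leibniz rule gives $\partial^v(\psi\phi)=\partial^v\psi\cdot\phi+\psi\cdot\partial^v\phi$. Hence the claim is equivalent to $\int_X\partial^v(\psi\phi)\dd\mu=0$, i.e. to the assertion that the integral against $\mu$ of the derivative along $v$ of any compactly supported smooth function vanishes. This is exactly the statement that the flow generated by the vector field $v$ preserves $\mu$: since $v$ is (the image on $X$ of) a right-invariant vector field on $G$, it generates the flow $x\mapsto \exp(tw)x$ for the corresponding Lie algebra element, i.e. left translation by a one-parameter subgroup of $G$, and the Haar measure $\mu$ on $X=G/\Gamma$ is invariant under all left translations by $G$. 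Therefore for any $f\in C_c^\infty(X)$, differentiating $\int_X f(\exp(tw)x)\dd\mu(x)=\int_X f\dd\mu$ at $t=0$ gives $\int_X\partial^v f\dd\mu=0$; equivalently, one can invoke the divergence theorem on the closed manifold (or on a compact subset containing $\supp f$ in its interior) for the measure-preserving field $v$. Applying this with $f=\psi\phi$ completes the proof.

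I do not expect a genuine obstacle here; the only point requiring a word of care is the justification that $\mu$ is invariant under the flow of $v$, which is immediate from the identification of elements of $\mathfrak g$ with right-invariant vector fields on $G$ (so that their flows are left translations) and the left-$G$-invariance of the probability Haar measure on $G/\Gamma$. All differentiation under the integral sign is justified by compact support and smoothness.
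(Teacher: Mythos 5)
Your argument is, at its core, the same as the paper's: both proofs reduce the identity to the fact that the left-translation flow $x\mapsto\exp(tv)x$ generated by the right-invariant field $v$ preserves $\mu$, so that $\int_X\partial^v f\dd\mu=0$ for $f\in C_c^\infty(X)$ (the paper phrases this as $\int_X f_t\dd\mu=0$ for the difference quotient $f_t$ of $(\psi\phi)\circ\exp(tv)$, followed by dominated convergence; you phrase it as the divergence theorem for a measure-preserving field). Your preliminary reduction from $\widehat C_c^\infty(X)$ to $C_c^\infty(X)$ is fine and is a point the paper passes over silently.

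There is, however, a sign problem at the pivotal step. The Leibniz rule gives $\int_X\partial^v(\psi\phi)\dd\mu=\int_X\partial^v\psi\cdot\phi\dd\mu+\int_X\psi\cdot\partial^v\phi\dd\mu$, so the vanishing of the left-hand side is equivalent to
\[
\int_X\partial^v\psi\cdot\phi\dd\mu=-\int_X\psi\cdot\partial^v\phi\dd\mu,
\]
\emph{not} to the displayed identity with a plus sign; your sentence ``hence the claim is equivalent to $\int_X\partial^v(\psi\phi)\dd\mu=0$'' is false as written. Indeed the plus-sign identity cannot hold for $v\neq 0$: together with the (true) minus-sign identity it would force $\int_X\partial^v\psi\cdot\phi\dd\mu=0$ for all $\psi,\phi$, i.e.\ $\partial^v\equiv 0$. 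What has happened is that the lemma itself is misprinted --- the minus-sign version is the one that is both true and required for the subsequent definition $\langle\partial^\alpha\varphi,\phi\rangle=(-1)^{|\alpha|}\langle\varphi,\partial^\alpha\phi\rangle$ to agree with the classical derivative on smooth functions --- and the paper's own proof contains the matching slip (the pointwise limit of its $f_t$ is $\partial^v\psi\cdot\phi+\psi\cdot\partial^v\phi$, not the printed $\partial^v\psi\cdot\phi-\psi\cdot\partial^v\phi$). So your argument is the right one for the corrected statement; just state the equivalence with the correct sign rather than matching it to the misprinted conclusion.
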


\begin{proof}
For any nonzero real number $t$ and $x\in X$ we let \[
f_t(x)=\frac{\psi(\exp (tv)x )\phi(\exp (tv)x )-\phi(x)\psi (x)}{t}.
\]
The functions $f_t$ are uniformly bounded by the mean value theorem. So the dominated convergence theorem implies
\[
\lim_{t\to 0}\int_X f_t \dd\mu=\int_X \lim_{t\to 0} f_t \dd\mu=\int_X 
\partial ^v \psi \phi- \psi \partial ^v\phi \dd \mu.
\]
On the other hand, since $G$ preserves $\mu$, we have $\int_X f_t \dd\mu=0$
for all $t$. 
Therefore (\ref{eq;distribution}) holds. 
\end{proof}

Lemma \ref{lem;distribution} allows us to define the distribution derivative of $\varphi\in L_\mu^2$. 
For each $\alpha\in \mathfrak g^k$, we define $\partial ^\alpha \varphi$ as a linear functional on $\widehat C^\infty_c(X)$ such that 
\[
\langle  \partial ^\alpha \varphi,  \phi \rangle =(-1)^{|\alpha|}
\langle \varphi, \partial ^\alpha   \phi \rangle  \quad \mbox{where}\quad |\alpha|=k. 
\]
In view of Lemma \ref{lem;distribution}, 
this definition is consistent with the usual definition in the sense that the distribution 
derivative of a function $\psi \in \widehat C_c^\infty(X)$ is represented by the function
$\partial ^\alpha \psi$.
 This property implies that our definition  of distribution derivative is independent of the choice of coordinates and  coincides with the definition using local charts. 
Let $\mathfrak g^-$ and $\mathfrak g^+$ be the Lie algebras of $G_a^-$ and $G_a^+$, respectively.
Let   $\mathfrak l $ be the Lie algebra of 
  $L=\{g\in G: ga=ag \}$. For a square integrable function $\xi $ on $L$ and $\alpha\in \mathfrak l^k$, one can define the distribution derivative $\partial ^\alpha \xi$ in a similar way. Moreover, this definition  coincides with the definition using local charts.

\begin{lem}
	\label{lem;partial}
	Suppose $\psi \in \widehat C_c^\infty (X)$ and 
 $\varphi$ is an $L^2_{\mu,0}$ solution to the cohomological equation $\psi(x)=\varphi(ax)-\varphi(x)$. 
 Then for all $\alpha\in \mathfrak l^k$ and $\phi\in \widehat C_c^\infty (X)$, one has
	\[
\langle	\partial ^\alpha \varphi, \phi\rangle=-\sum_{n=0}^\infty \int_X \partial^\alpha \psi (a^n x)\phi(x)\dd\mu(x). 
	\]
\end{lem}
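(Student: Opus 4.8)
The plan is to iterate the cohomological equation to express $\varphi$ as a telescoping-type series and then differentiate in the $L$-direction term by term. Concretely, from $\psi(x) = \varphi(ax) - \varphi(x)$ we get for every $n \geq 1$ the identity $\varphi(x) = \varphi(a^n x) - \sum_{j=0}^{n-1} \psi(a^j x)$, valid $\mu$-almost everywhere. Since $\mu(\psi) = 0$ by Lemma \ref{lem;solution} and $(X,\mu,a)$ is exponential mixing (Lemma \ref{lem;mixing}), the tail $\varphi(a^n x)$ should disappear in a suitable weak sense as $n \to \infty$; more precisely, I would test against $\phi \in \widehat C_c^\infty(X)$ and use that $\langle \varphi \circ a^n, \phi\rangle = \langle \varphi, \phi \circ a^{-n}\rangle \to \mu(\varphi)\mu(\phi) = 0$ by mixing, because $\varphi \in L^2_{\mu,0}$. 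This gives the scalar identity $\langle \varphi, \phi\rangle = -\sum_{n=0}^\infty \langle \psi \circ a^n, \phi\rangle$, which is the $\alpha = \emptyset$ case.

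Next I would handle a single derivative, $\alpha = (v)$ with $v \in \mathfrak l$. The key point is that $v$ commutes with $\Ad(a)$ (since $a$ centralizes $L$), so $\partial^v$ commutes with the operator $\phi \mapsto \phi \circ a$ up to no rescaling at all: $\partial^v(\phi \circ a) = (\partial^v \phi)\circ a$. Therefore, testing the distributional derivative against $\phi \in \widehat C_c^\infty(X)$ and using the definition $\langle \partial^v \varphi, \phi\rangle = -\langle \varphi, \partial^v \phi\rangle$, I apply the scalar identity just proved with $\partial^v \phi$ in place of $\phi$:
\[
\langle \partial^v \varphi, \phi\rangle = -\langle \varphi, \partial^v \phi\rangle = \sum_{n=0}^\infty \langle \psi \circ a^n, \partial^v \phi\rangle = -\sum_{n=0}^\infty \langle \partial^v(\psi \circ a^n), \phi\rangle = -\sum_{n=0}^\infty \langle (\partial^v \psi)\circ a^n, \phi\rangle,
\]
where the third equality is Lemma \ref{lem;distribution} (integration by parts) applied to each term, and the last uses that $\partial^v$ commutes with composition by $a^n$ for $v \in \mathfrak l$. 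The general case $\alpha \in \mathfrak l^k$ follows by the same computation, iterating Lemma \ref{lem;distribution} $k$ times and using that $\partial^\alpha(\psi \circ a^n) = (\partial^\alpha \psi)\circ a^n$ for $\alpha$ a word in $\mathfrak l$.

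The main obstacle is justifying the convergence and interchange of sum and integral at the level of distributions, and confirming that the tail term $\langle \varphi \circ a^n, \partial^\alpha \phi\rangle$ really vanishes in the limit. For the tail: $\varphi \in L^2_{\mu,0}$ and $\partial^\alpha \phi \in \widehat C_c^\infty(X) \cap L^2_{\mu}$ has a constant part that pairs to $\mu(\varphi)\cdot(\text{const}) = 0$ since $\mu(\varphi)=0$, while its $L^2_{\mu,0}$ part pairs to $0$ by mixing of $(X,\mu,a)$ (one does not even need exponential mixing here, plain mixing suffices). For the series: since $\psi \in \widehat C_c^\infty(X)$ its $\mu$-mean is $0$, so $\partial^\alpha \psi$ also has mean zero (for $|\alpha|\geq 1$ this is automatic; for $|\alpha|=0$ it is Lemma \ref{lem;solution}), and Lemma \ref{lem;mixing} gives $|\langle (\partial^\alpha \psi)\circ a^n, \phi\rangle| \lesssim \|\partial^\alpha \psi\|_{\ell_0}\|\phi\|_{\ell_0} e^{-\delta' n}$, so the series converges absolutely and uniformly; this legitimizes summing the finite-$n$ identity $\langle \varphi, \partial^\alpha\phi\rangle = (-1)^{|\alpha|}\sum_{j=0}^{n-1}\langle (\partial^\alpha\psi)\circ a^j, \phi\rangle + \langle \varphi\circ a^n, \partial^\alpha\phi\rangle$ and passing to the limit. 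Everything else is routine bookkeeping with the integration-by-parts identity of Lemma \ref{lem;distribution}.
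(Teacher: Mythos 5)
Your proposal is correct and follows essentially the same route as the paper: iterate the cohomological equation to get $\varphi(x)=\varphi(a^kx)-\sum_{n=0}^{k-1}\psi(a^nx)$, kill the tail term $\int_X\varphi(a^kx)\partial^\alpha\phi(x)\dd\mu$ by mixing together with $\mu(\varphi)=0$, integrate by parts via Lemma \ref{lem;distribution}, and use that $\partial^\alpha(\psi\circ a^n)=(\partial^\alpha\psi)\circ a^n$ because $\Ad(a)$ fixes $\mathfrak l$, with Lemma \ref{lem;mixing} supplying the absolute convergence of the series. The only difference is organizational (you integrate by parts first and then sum, the paper sums first and then integrates by parts), which is immaterial.
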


\begin{proof}
	According to the definition and the assumption $\psi(x)=\varphi(ax)-\varphi(x)$, 
	\begin{equation}\label{eq;luan}
	\begin{aligned}
	\langle	\partial ^\alpha \varphi, \phi\rangle&=(-1)^{|\alpha|}
	\int_X \varphi \partial ^\alpha \phi \dd\mu \\
	&=(-1)^{|\alpha|}\int_X \big (\varphi (a^{k}x)-\sum_{n=0}^{k-1}  \psi(a^n x)\big)\partial^\alpha\phi(x)
	\dd\mu(x)\\
	&=(-1)^{|\alpha|}\left(\int_X \varphi (a^kx )\partial^\alpha\phi(x) \dd \mu-
	\sum_{n=0}^{k-1}\int_X \psi (a^nx )\partial^\alpha\phi(x) \dd \mu
	\right).
	\end{aligned}
	\end{equation}
	As the action of $a$ on $(X, \mu)$ is  mixing and $\mu(\varphi)=0$, we have 
$$\lim_{k\to \infty} \int_X \varphi (a^kx )\partial^\alpha\phi(x) \dd \mu=0.$$
On the other hand, by  Lemma \ref{lem;mixing}  the mixing is  exponential for functions in $\widehat C_c ^\infty (X)$.
So by (\ref{eq;luan}) and  Lemma \ref{lem;distribution},
	\begin{align*}
\langle	\partial ^\alpha \varphi, \phi\rangle&=(-1)^{|\alpha|+1}
	\sum_{n=0}^\infty\int_X \psi (a^nx )\partial^\alpha\phi(x) \dd \mu\\
&	=-\sum_{n=0}^\infty \int_X \partial^\alpha(\psi \circ a^n )(x)\phi(x)\dd\mu(x)\\
& = -\sum_{n=0}^\infty \int_X \partial^\alpha\psi ( a^n x)\phi(x)\dd\mu(x).
\end{align*}
\end{proof}

A vector field $v$ on $X$ is said to be tangent to $L$ orbits if
its value $v_x$ at  each point  $x\in X$  is tangent to the submanifold $Lx$ .
The space of all such  vector fields is denoted by $\mathcal F_L(X)$. 
We fix a basis  $\mathfrak b$ of $\mathfrak g$ consisting of eigenvectors of $\Ad (a)$ and let  $\mathfrak b _{\mathfrak l}=\mathfrak b\cap \mathfrak l$. 

\begin{lem}
	\label{lem;commute}
	Suppose $\beta\in \mathcal F_L(X)^k$. Then there exists a 
	family of smooth functions $f_\alpha\ (\alpha\in \mathfrak b^r_{\mathfrak l}, r\le k)$ 
	such 
	that 
	\[
	\partial ^\beta =\sum_{\alpha\in \mathfrak b_{\mathfrak l}^r, r\le k} f_\alpha \partial ^\alpha.
	\]  
\end{lem}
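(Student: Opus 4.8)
The plan is to induct on $k$, reducing everything to the case $k=1$ by the Leibniz-type expansion of iterated differential operators. For $k=1$, a vector field $\beta\in\mathcal F_L(X)$ has, at each $x\in X$, a value $\beta_x$ tangent to $Lx$; since the right-invariant vector fields attached to $\mathfrak b_{\mathfrak l}$ span the tangent space to $Lx$ at every point (this is exactly because $Lx$ is a homogeneous space for $L$ and $\mathfrak b_{\mathfrak l}$ is a basis of $\mathfrak l$), we may write $\beta_x=\sum_{\alpha\in\mathfrak b_{\mathfrak l}}f_\alpha(x)\,\alpha_x$ for unique scalars $f_\alpha(x)$. The map $x\mapsto f_\alpha(x)$ is smooth because $\beta$ is smooth and the frame $\{\alpha_x:\alpha\in\mathfrak b_{\mathfrak l}\}$ depends smoothly (indeed real-analytically) on $x$ and is everywhere linearly independent, so the coefficients are obtained by inverting a smoothly varying matrix. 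This gives $\partial^\beta=\sum_{\alpha\in\mathfrak b_{\mathfrak l}}f_\alpha\,\partial^\alpha$, the $k=1$ case. Note here one must be slightly careful about what "$\partial^\beta$" means for a general (non-invariant) vector field, but it is just the first-order operator $\phi\mapsto \beta\phi$, so the identity is the pointwise identity of derivations.

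For the inductive step, suppose the claim holds for all tuples of length $<k$, and let $\beta=(v_1,\dots,v_k)\in\mathcal F_L(X)^k$. Write $\beta'=(v_2,\dots,v_k)$, so by induction $\partial^{\beta'}=\sum_{\gamma\in\mathfrak b_{\mathfrak l}^{s},\,s\le k-1}g_\gamma\,\partial^{\gamma}$ for smooth $g_\gamma$, and by the $k=1$ case $\partial^{v_1}=\sum_{w\in\mathfrak b_{\mathfrak l}}f_w\,\partial^{w}$ for smooth $f_w$. Then
\[
\partial^\beta=\partial^{v_1}\partial^{\beta'}
=\sum_{w}\sum_{\gamma}f_w\,\partial^{w}\!\left(g_\gamma\,\partial^{\gamma}\right)
=\sum_{w}\sum_{\gamma}f_w\Big((\partial^{w}g_\gamma)\,\partial^{\gamma}+g_\gamma\,\partial^{w}\partial^{\gamma}\Big).
\]
Each term $f_w(\partial^w g_\gamma)\partial^\gamma$ is already of the required form with $\gamma\in\mathfrak b_{\mathfrak l}^{s}$, $s\le k-1$, and smooth coefficient $f_w\partial^w g_\gamma$; each term $f_w g_\gamma\,\partial^{w}\partial^{\gamma}=f_wg_\gamma\,\partial^{(w,\gamma)}$ with $(w,\gamma)\in\mathfrak b_{\mathfrak l}^{s+1}$, $s+1\le k$, and smooth coefficient $f_wg_\gamma$. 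Collecting coefficients of equal multi-indices $\alpha$ (and recalling that the operators $\partial^\alpha$ for distinct words $\alpha$ need not be linearly independent, so one may simply add them) yields the desired expression $\partial^\beta=\sum_{\alpha\in\mathfrak b_{\mathfrak l}^r,\,r\le k}f_\alpha\,\partial^\alpha$ with all $f_\alpha$ smooth.

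The only genuinely substantive point — and the place I would be most careful — is the $k=1$ smoothness of the coefficient functions $f_\alpha$, i.e. that expressing a smooth section of the subbundle $T\mathcal L\subset TX$ (the bundle tangent to the $L$-orbit foliation $\mathcal L$) in terms of the everywhere-linearly-independent smooth frame given by $\mathfrak b_{\mathfrak l}$ produces smooth coefficients. This is standard linear algebra over $C^\infty(X)$: locally the frame gives a smooth trivialization of $T\mathcal L$, and the coefficients are smooth functions of the entries of $\beta$ in that trivialization. One should also check that the foliation $\mathcal L$ by $L$-orbits is genuinely a (smooth) foliation with the $\mathfrak b_{\mathfrak l}$-vector fields tangent to it; this holds because $L$ is a closed subgroup of $G$ acting locally freely on $X$ in the sense needed — more precisely, the right-invariant vector fields from $\mathfrak l$ span a constant-rank integrable distribution whose leaves are the $L$-orbits — so no extra hypothesis is required. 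Everything else is bookkeeping with the product rule.
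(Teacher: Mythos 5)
Your proposal is correct and follows essentially the same route as the paper: induction on $k$, with the base case coming from the fact that the right-invariant frame $\mathfrak b_{\mathfrak l}$ spans the tangent space of each $L$-orbit (with smooth coefficients), and the inductive step carried out by peeling off $\partial^{v_1}$ and applying the product rule. Your extra care about the smoothness of the coefficient functions in the $k=1$ case is a detail the paper leaves implicit, but it is the same argument.
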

\begin{proof}
	Since the values of $\mathfrak b_{\mathfrak l}$ at any point form a basis of the tangent space of the point,  the conclusion is clear
for  $k=1$. 
We prove the general case by induction.  Suppose the conclusion holds  for $|\beta|\le k$ 
and $\partial ^\beta = \partial^{v_1}\cdots \partial ^{v_{k+1}} $ where $v_i\in \mathcal F_L(X)$.  
By the  induction hypothesis 
\[
\partial ^{v_1}=\sum_{v\in \mathfrak b_{\mathfrak l}} f_v \partial ^v\quad \mbox{
and } \quad 
\partial^{v_2}\cdots\partial ^{v_{k+1}}=\sum_{\alpha\in \mathfrak b_{\mathfrak l}^r, r\le k} f_\alpha \partial ^\alpha, 
\] 
where $f_v$ and $f_\alpha$ are smooth functions on $X$.
Let $\phi\in C_c^\infty (X)$ be a test function.
Then \begin{align*}
\partial ^\beta \phi=\sum_{v\in \mathfrak b_{\mathfrak l}} f_v \partial ^v \big(\sum_{
	|\alpha |\le k } f_\alpha \partial ^\alpha \phi \big)
=\sum _{|\alpha|\le k} \big (\sum _{v\in \mathfrak b_{\mathfrak l}}f_v\partial ^v f_\alpha \big )\partial ^\alpha\phi+\sum_{v\in \mathfrak b_{\mathfrak l}} f_vf_\alpha \partial ^{(v,\alpha)} \phi.
\end{align*}
So the conclusion holds for $|\beta|=k+1$. 
\end{proof}

\begin{proof}
	[Proof of Theorm \ref{thm;continuous}]
We have proved  $\varphi\in L_\mu^2$ in Lemma \ref{lem;solution}. Next we show that $\varphi$ is  equal to 
 a smooth function
 almost surely. By possibly replacing $\varphi$ by $\varphi-\mu(\varphi)$, we assume that $\mu(\varphi)=0$.    In view of  Theorem \ref{thm;general}, we assume that $\varphi=\til \varphi$ is defined on 
the  $G_a'$-invariant full measure subset $X'=\mathrm{MC}(\varphi)$ so that the conclusions of Theorem \ref{thm;general} hold.
We will show that $\mathrm{MC}(\varphi)=X$ and   $\varphi$ is smooth.

The question is local, so we 
 fix  $x\in X$. 
  We choose   open neighborhoods  $U$,  $U^-$ and $U^+$ of the  identity in $L$,  $G_a^-$ and $G_a^+$,  respectively, such that $U\times U^-\times U^+$ is diffeomorphic to its image  via the map $(g, h_0, h_1)\to gh_0h_1x$.  
Let 
$
\xi : U  \to X  $  {be defined by } $ g\in U\to \varphi (gx). 
$
 We fix Haar measures on $G, G_a^-, G_a^+$ and $L$ so that a fundamental domain of $\Gamma$ has measure $1$ and 
\begin{align}\label{eq;haar}
\dd (gh_0h_1)=\dd g \dd h_0 \dd h_1\quad \mbox{ where }
\quad g\in L, h_0\in G_a^- \mbox{ and }h_1\in 
G_a^+. 
\end{align}
Since   $\varphi\in L_\mu^2$,  the function $\xi $  is square integrable.

  Suppose $g\in L$ and $gx\in X'$. Note that  $L$ normalizes $G_a^+$ and $G_a^-$. So by (\ref{eq;simpler}), 
  (\ref{eq;wu}) and (\ref{eq;wu1})
  \begin{equation}
  	\label{eq;conference}
  	\begin{aligned}
  		&\varphi(gh_0h_1x)=\varphi(gh_0g^{-1} \cdot gh_1 x)\\
  		=&\varphi(gh_1x)+\sum_{n=0}^{\infty}\psi (a^n gh_1 x)-\psi(a^n gh_0h_1 x)\\
  		=&\varphi(gh_1g^{-1} \cdot gx)+\sum_{n=0}^{\infty}\psi (a^n gh_1 x)-\psi(a^n gh_0h_1 x)\\
  		=& \varphi(gx)+\lambda_\psi( g, h_0, h_1),
  	\end{aligned}
  \end{equation}
  where 
  \begin{equation}
  \label{eq;conference0}
  \begin{aligned}
  	\lambda_\psi(g,  h_0, h_1)=\sum_{n=-\infty}^{-1}[\psi(a^n gh_1 x)-\psi(a^ngx)]+\sum_{n=0}^{\infty}[\psi (a^n gh_1 x)-\psi(a^n gh_0h_1 x)]\\
  	=\sum_{n=-\infty}^{-1}[\psi(ga^n h_1 x)-\psi(ga^nx)]+\sum_{n=0}^{\infty}[\psi (ga^n h_1 x)-\psi(ga^n h_0h_1 x)].
  \end{aligned}
    \end{equation}
  For $\alpha\in \mathfrak l^k$, let $\partial^\alpha_g$ be the differential operator $\partial ^\alpha$ with respect to the variable   $g\in L$. Then 
  \begin{equation}
  \label{eq;conference1}
  \begin{aligned}
  \partial^\alpha_g [\psi(ga^n h_1 x)-\psi(ga^nx)]&=\partial^\alpha\psi(ga^n h_1 x)-\partial^\alpha\psi(ga^nx)\\
  &=\partial^\alpha\psi(a^n gh_1g^{-1} \cdot gx)-\partial^\alpha\psi(a^n \cdot gx),
  \end{aligned}
    \end{equation}
  whose sum over negative integers $n$ converges uniformly with respect to $h_1\in U^+$ and $g\in U$. 
  Similar statement holds for $\partial^\alpha_g[\psi (ga^n h_1 x)-\psi(ga^n h_0h_1 x)]$.
Therefore, (\ref{eq;conference0}) converges uniformly on $U\times U^-\times U^+$ and 
  \begin{align}\label{eq;conference2}
  \partial^\alpha_g \lambda_\psi(g, h_0, h_1 )=\lambda_{\partial^\alpha\psi}(g, h_0, h_1).
  \end{align}

The uniform convergence of   (\ref{eq;conference0}) implies that $\lambda(g, h_0, h_1)$ is continuous on 
$U\times U^-\times U^+$. 
In view of  (\ref{eq;conference}), if $\xi$ is equal to a  continuous function almost everywhere,  then $\varphi$ is equal to a continuous function almost everywhere. 
This implies that $MC(\varphi)\cap UU^-U^+ x=UU^-U^+ x$ and 
$\varphi $ is continuous on $UU^-U^+x$.
It follows from   (\ref{eq;conference}), (\ref{eq;conference0}),  (\ref{eq;conference1}) and (\ref{eq;conference2}) that if 
$\xi$ is smooth on $U$, then for any $\alpha\in \mathfrak l^k$ we have  $\partial ^\alpha\varphi(y) $ exists for all
$y\in UU^-U^+ x$ and these values are uniformly bounded on  $UU^-U^+x$. 
On the other hand, 
   by Theorem \ref{thm;general}(\rmnum{4}), 
   the function $\varphi$ is uniformly smooth along $G^-_a$ and $G_a^+$ orbits of $\mathrm{MC}(\varphi)$. 
 Therefore,  if $\xi$ is equal to a smooth function almost everywhere, then $\varphi$ is defined everywhere 
 on $UU^-U^+ x$ and 
all the partial derivatives of $\varphi$ along foliations  are uniformly bounded on $UU^-U^+ x$. So a theorem of 
  Journ\'e \cite{journe} implies  that $\varphi$ is smooth on $UU^-U^+ x$.

 Therefore, it suffices to show that $\xi$ is equals to a smooth function almost everywhere.
 Let $m$ be the dimension of $L$.
 We assume that there is a coordinate map $b: U \to \Omega$ where $\Omega$ is an open ball in
 $\R^m$. Moreover, we assume there is a smooth function $\rho: U\to \R $ that  is  bounded from above and below by some positive constants  so that $\rho(g) \dd g $ is mapped by $b$ to the Lebesgue measure on 
 $\Omega$.   
 For a compactly supported smooth function $\zeta_0: U\to \R$, we consider the Fourier transform of $f(y)=\xi(b^{-1}y)\zeta_0 (b^{-1}y)$ defined by 
\[
\widehat f(z)= \int_{\Omega} \xi(b^{-1}y) \zeta_0(b^{-1}y) e^{-2\pi i y\cdot z}\dd y, 
\]
which is a continuous function on $\R^m$. 
We claim that for any positive integer $n$ and $z=(z_1, \ldots, z_m)\in \R^m$, we have
\begin{align}\label{eq;claim}
|z|^n_{\sup} \widehat f(z)\in L^1(\R^m)\quad \mbox{where} \quad  |z|_{\sup} =\max \{ |z_1|, \ldots, |z_m| \}
. 
\end{align}
Assume the claim, then  it follows from the Fourier inversion formula that $f$ equals to a smooth function almost everywhere, see for example Folland \cite[Thm.~8.22.d, 8.26]{folland}.
By choosing $\zeta_0$ with arbitrarily large support,  we have $\xi$
is equal to a smooth function almost everywhere.

Now we prove the claim.  We will use the usual multiple index notation for the partial derivatives on $\R^m$ (see \cite[\S 8.1]{folland}).   For example, if
$\gamma=(k_1, \cdots, k_m)\in \Z_{\ge 0}^m $ is a multiple index, then 
 $$
z^\gamma=z_1^{k_1}\cdots z_m^{k_m}, \quad \partial ^\gamma =\left(\frac{\partial}{\partial y_1}\right)^{k_1}\cdots \left(\frac{\partial}{\partial y_m}\right)^{k_m}\quad \mbox{and}\quad |\gamma|=k_1+\cdots k_m.
 $$
We have
\begin{equation}\label{eq;sign}
\begin{aligned}
(2\pi i  z)^\gamma \widehat f(z)&=(-1)^{|\gamma|}\int_{\Omega} \xi(b^{-1}y) \zeta_0(b^{-1}y)\partial ^\gamma e^{-2\pi i y\cdot z}\dd y\\
&=\int_{\Omega} \partial ^\gamma\big(\xi(b^{-1}y) \zeta_0(b^{-1}y)\big) e^{-2\pi i y\cdot z}\dd y,
\end{aligned}
\end{equation}
where $\partial ^\gamma$ refers to the distribution derivative.
By the product rule
\begin{equation}\label{eq;sign1}
\begin{aligned}
\partial ^\gamma\big(\xi(b^{-1}y)\zeta_0(b^{-1}y)\big)&=\sum_{\tau\le \gamma }
\partial ^\tau\xi(b^{-1}y)\partial ^{\gamma-\tau} \zeta_0(b^{-1}y), 
\end{aligned}
\end{equation}
where the sum is taken over all the multiple indices $\tau $ such that  $\gamma-\tau$ has nonnegative entries and each $\tau$ appears 
$\frac{\gamma!}{\tau!(\gamma-\tau)!}$ times. 
Since $\zeta_0$ is a
 compactly supported function on $U$, there are 
 differential operators 
$\til \tau \in \mathcal F_L^{|\tau|}$ and $\til {\gamma-\tau }\in \mathcal F_L^{|\gamma-\tau|} $ such that 
\begin{align}\label{eq;sign2}
\partial ^\tau\xi(b^{-1}y)\partial ^{\gamma-\tau} \zeta_0(b^{-1}y)=\partial ^{\til\tau}\xi(g)\partial ^{\til{\gamma- \tau}} \zeta_0(g)
\end{align}
where $b(g)=y$.

By (\ref{eq;sign}),  (\ref{eq;sign1}) and (\ref{eq;sign2})
\begin{align}\label{eq;dance}
(2\pi i  z)^\gamma\widehat f(z)&=\int_{U} \sum_{\tau\le \gamma}\partial ^{\til\tau}\xi(g)\partial ^{\til{\gamma- \tau}} \zeta_0(g) 
e^{-2\pi i b(g)\cdot z}\rho(g)\dd g.
\end{align}
By Lemma \ref{lem;commute}, for each $\til \tau$ one has 
\begin{align}\label{eq;dance1}
\partial ^{\til \tau}=\sum_{\alpha \in \mathfrak b_{\mathfrak l}^r , r\le |\tau|}
f_\alpha\partial ^\alpha,
\end{align}
where $f_\alpha$ are smooth functions.  
In view of (\ref{eq;dance}) and (\ref{eq;dance1}) 
\begin{align}\label{eq;april}
(2\pi i  z)^\gamma\widehat f(z)=\sum_{\alpha \in \mathfrak b_{\mathfrak l}^r,r\le |\gamma|}\int_{U} \partial ^{\alpha}\xi(g)\eta_{\alpha}(g)e^{-2\pi i b(g)\cdot z}\dd g,
\end{align}
where $\eta_{\alpha}(g)$ are  compactly supported smooth functions on $U$.

Now we   lift all the $\eta_\alpha$ to  compactly supported smooth functions $\til \eta _\alpha$ on $X$. 
Let $\zeta: U^{-}\times U^+\to [0,\infty)$ be a compactly supported smooth function such that 
$$\int_{U^+}\int_{U^-} \zeta (h_0,h_1)\dd h_0\dd h_1=1 .$$ 
Each function  $\eta \in C_c^\infty(U)$ is lifted to  $\til \eta \in C_c^\infty (X)$ as follows: 
\[
\eta(y)=\left \{ 
\begin{array}{ll}
\eta (g)\zeta (h_0, h_1) & \mbox{if } y=gh_0h_1x\mbox{ for }g\in U,h_0\in U^-, h_1\in U^+\\
0 & \mbox{otherwise}
\end{array}
\right.   .
\]
By (\ref{eq;haar}), (\ref{eq;conference}) and the  Fubini's theorem  we have 
\begin{equation}\label{eq;faint}
\begin{aligned}
\int_X \varphi(x) \til \eta(x)\dd \mu(x)&=\int_{U^+}\int_{U^-}\int_U\varphi(gh_0h_1 x) \til \eta(gh_0h_1 x) 
\dd g\dd h_0 \dd h_1\\
 & =\int_U\int_{U^+}\int_{U^-} \big(\varphi(gx)+\lambda_\psi(g,  h_0, h_1)\big)\eta(g )\zeta(h_0,h_1)\dd h_0\dd h_1\dd g\\
& =\int_U\lambda_\psi(g)  \eta(g) \dd g+\int_U \xi(g)\eta(g)\dd g,
\end{aligned}
\end{equation}
where 
\[
\lambda_\psi(g) =\int_{U^+}\int_{U^-}\lambda_\psi(g, h_0, h_1)\zeta(h_0, h_1)\dd h_0\dd h_1. 
\]

By (\ref{eq;conference2}) and the uniform convergence of (\ref{eq;conference0}), the function $\lambda_\psi(g)$ is  smooth on $U$ and 
for  any $\alpha\in \mathfrak b_{\mathfrak l}^r$ 
\begin{align}\label{eq;visa}
\partial^\alpha\lambda_{\psi}(g)=\lambda_{\partial^\alpha\psi} (g). 
\end{align}
 For any  $\eta \in C_c^\infty(U)$, by  (\ref{eq;faint}) and (\ref{eq;visa}),  we have 
\begin{equation}\label{eq;wumai}
\begin{aligned}
&\int_U (\partial ^\alpha \xi) \eta\dd g=(-1)^r\int_U \xi(\partial ^\alpha  \eta)\dd g\\ 
=& (-1)^r\int_X \varphi(x) \til {\partial^\alpha\eta}(x)\dd \mu(x)+(-1)^{r+1}\int_U\lambda_\psi(g)  (\partial^\alpha\eta)(g) \dd g\\
=&\int_X \partial ^\alpha\varphi(x) \til {\eta}(x)\dd \mu(x)-\int_U \partial ^\alpha \lambda _\psi(g) \eta(g)\dd (g)\\
=&  \int_X \partial ^\alpha\varphi(x) \til {\eta}(x)\dd \mu(x)-\int_U \lambda _{\partial ^\alpha \psi}(g) \eta(g)\dd (g).
\end{aligned}
\end{equation}

Write $\eta_{\alpha,z}(g)=\eta_{\alpha}(g)e^{-2\pi i b(g)\cdot z}$, then 
by (\ref{eq;wumai})
\begin{align}\label{eq;april1}
\int_{U}\partial ^{\alpha}\xi(g)\eta_{\alpha,z}(g)\dd g=\int_X \partial ^{\alpha}\varphi(x)\til \eta_{\alpha,z} (x)\dd\mu(x)-\int_U  \lambda _{\partial ^\alpha\psi}(g)\eta _{\alpha, z}(g)\dd g. 
\end{align}
By Lemma \ref{lem;partial}, 
\begin{equation}\label{eq;april2}
\begin{aligned}
\left|\int_X \partial ^{\alpha}\varphi(x)\til \eta_{\alpha,z} (x)\dd\mu(x)\right|&=
\left|
\sum_{n=0}^\infty \int_X \partial ^\alpha\psi (a^n x)\til  \eta_{\alpha,z} (x)\dd\mu(x)\right|\\
& \le  \sum_{n=0}^\infty\left |\int_X \partial ^\alpha\psi (a^n x)\til  \eta_{\alpha,z} (x)\dd\mu(x)\right|.
\end{aligned}
\end{equation}
By Lemma  \ref{lem;mixing},
\begin{align}\label{eq;march}
\left |\int_X \partial ^\alpha\psi (a^n x)\til  \eta_{\alpha,z} (x)\dd\mu(x) \right |
\lesssim_{\alpha}  e^{-n\delta' }\|\eta_{\alpha,z}\|_{\ell_0}\lesssim_\alpha e^{-n\delta' }|z|_{\sup}^{\ell_0}.
\end{align}
So (\ref{eq;april2}) and (\ref{eq;march}) imply 
\begin{align}
\label{eq;eye}
\left|\int_X \partial ^{\alpha}\varphi(x)\til \eta_{\alpha,z} (x)\dd\mu(x)\right|\lesssim_\alpha |z|_{\sup}^{\ell_0}. 
\end{align}
On the other hand, 
\begin{align}
\label{eq;tired}
\left |\int_U  \lambda _{\partial ^\alpha\psi}(g)\eta _{\alpha, z}(g)\dd g\right |\le \int_U  
\left | \lambda _{\partial ^\alpha\psi}(g)\eta _{\alpha}(g)\right |  \dd g \lesssim_{\alpha}1.
\end{align}
By   (\ref{eq;april1}), (\ref{eq;eye}) and (\ref{eq;tired}),  we have 
\begin{align}
\label{eq;night}
\left|\int_{U}\partial ^{\alpha}\xi(g)\eta_{\alpha,z}(g)\dd g\right|\lesssim_{\alpha} |z|_{\sup}^{\ell_0}+1.
\end{align}
Using (\ref{eq;night}) to estimate each term of the right hand side of (\ref{eq;april}), we get 
\begin{align}\label{eq;mid}
|(2\pi i z)^\gamma \widehat f(z)|\lesssim_{\gamma} |z|_{\sup}^{\ell_0}+1.
\end{align}
For any positive integer $k$, 
we sum the left hand side of (\ref{eq;mid}) for all $\gamma=2k \gamma_i$ where $\{\gamma_i:1\le i\le m\}$
is the standard basis of $\R^m$, then
\[
|z|^{2k}_{\sup} |\widehat f(z)| \lesssim_k |z|_{\sup}^{\ell_0}+1,
\]
from which the claim (\ref{eq;claim}) follows. 

\end{proof}

\begin{proof}
	[Proof of Theorem \ref{thm;checkable}] 
	Suppose  the variance $\sigma(F, \phi)$ is zero, then Theorem \ref{thm;variance} implies that there is a measurable solution $\varphi$ to the system of cohomological equations (\ref{eq;guoqing}). Note that $\psi (x)=\int_0^1 \phi (a_tx )\dd t$
	belongs to $\widehat  C_c^\infty(X)$ and $\varphi$ is a measurable solution to the cohomological equation $\psi(x)=\varphi(a_1x)-\varphi(x)$. By assumption, the projection of $F$ to each simple factor of $G$ is nontrivial, so the action of $G_{a_1}'=H$ on $X$ has a spectral gap. 
    Theorem \ref{thm;continuous} implies that there is a smooth function $\til \varphi$ on 
	$X$ such that $\til \varphi=\varphi$ almost everywhere. This means that there is a continuous  function $\varphi$ such that  (\ref{eq;guoqing}) holds for all $s> 0$ and $x\in X$.

	To prove $\phi$ is dynamically null with respect to  $(  X, F)$, it suffices to show that for any $F$-invariant and ergodic probability measure $\til \mu$ on $X$ the integral $\til \mu(\phi)=0$. 
	By  Birkhoff ergodic theorem and Poincar\'e recurrence theorem, there is $x\in X$ such that $x $ is in the closure of $\{ a_t x: t\ge T_0 \}$ for any  $T_0\ge 0$ and 
	\begin{align*}
		\til \mu(\phi)=\lim_{T\to \infty}\frac{1}{T}\int_0^T \phi(a_tx)\dd t=\lim_{T\to \infty} \frac{1}{T}(\varphi(a_Tx)-\varphi(x)).
	\end{align*}
	The above two properties together with the continuity of $\varphi$ imply  $\til \mu(\phi)=0$. 
\end{proof}

\end{document}